\def\scfig #1 #2 {\resizebox{#2}{!}{\includegraphics{#1}}}
\newcommand\node[2]{\overset{#1}{\underset{#2}{\bullet}}}
\theoremstyle{plain}
\numberwithin{equation}{section}
\newtheorem{thm}[subsection]{Theorem}
\newtheorem{prop}[subsection]{Proposition}
\newtheorem{lema}[subsection]{Lemma}
\theoremstyle{definition}
\newtheorem{rema}[subsection]{Remark}
\newtheorem{defe}[subsection]{Definition}
\newcommand {\OO}{{\mathcal O}}
\newcommand {\J}{{\mathcal J}}
\def\gr{\operatorname{gr}}
\def\Ker{\operatorname{Ker}}
\def\dim{\operatorname{dim}}
\def\Spec{\operatorname{Spec}}
\def\dim{\operatorname{dim}}
\def\co-dim{\operatorname{co-dim}}
\title{An Equisingular Specialization of the Compactified Jacobian and its applications}
\author{ Sourav Das, A.J. Parameswaran and Subham Sarkar}
\begin{document}
\maketitle
\begin{abstract}
For any positive integer $k$, let $X_k$ be a projective irreducible nodal curve with $k$ nodes. We show that the Betti numbers and the mixed Hodge numbers of the compactified Jacobian $\overline{J_{k}}$ of an irreducible nodal curve $X_k$ with $k$ nodes are the same as the Betti numbers and the mixed Hodge numbers of $J_0\times R^k$, where $J_0$ is the Jacobian of the normalisation of the irreducible nodal curve and $R$ denotes the rational nodal curve with one node. We prove it by constructing a topologically locally trivial family of projective varieties which contains both $\overline{J_{k}}$ and $J_0\times R^k$ as fibres.
\end{abstract}

\tableofcontents

\section{\textbf{Introduction}}
Given a smooth projective algebraic curve $C$, one can associate a principally polarised Abelian variety $J_C$, called the Jacobian of $C$. It is the moduli of isomorphism classes of degree $0$ line bundles on the curve $C$. It is well-known that the Betti numbers of $J_C$ are given by $\wedge^{\bullet} \mathbb C^g$, where $g$ is the genus of the curve. Studying the moduli of line bundles on a stable nodal curve is also natural because the boundary of the Deligne-Momford compactification $\overline{M_g}$ of the moduli of curves consists of stable nodal curves.  However, the moduli of degree $0$ line bundles on a nodal curve is generally not compact and is called generalized Jacobian. A compactification of the generalized Jacobian of a nodal curve can be constructed using geometric invariant theory by choosing a polarization on the nodal curve (\cite{11}, \cite{4}). It is also an active research are to construct and study suitable universal compactified Jacobian over $\overline{M_g}$ (\cite{P, PT}). One of the many reasons for studying compactified Jacobian is its relation to the theory of Higgs bundles on curves. Some singular fibres of the Hitchin map can be described as the compactified Jacobian of some nodal curves using the so-called spectral correspondence (\cite{BNR}). On the other hand, the Langlands correspondence in the context of the Higgs bundles predicts interesting derived equivalence of different fine compactified Jacobians of a nodal curve (\cite{MRV I, MRV II}).  Compactified Jacobians also provide good examples where one can test predictions or conjectures. They are also related to the Hilbert schemes of points on nodal curves and are therefore useful in studying Hilbert schemes as well \cite{MS, MSV, MY}. In these articles, they establish a version of Macdonald's formula for integral curves with planar singularities. They show that the Betti numbers of the Hilbert scheme of the curve can be expressed as a direct sum of the shifted graded pieces of the perverse filtration on the compactified Jacobian of the curve. 

We should mention a related work \cite{Pi} on the computation of Betti numbers of the compactified Jacobians of uni-branched rational curves with some special type of singularities. 

\

In this paper, we compute the Betti numbers and the mixed Hodge numbers of the compactified Jacobian of irreducible nodal curves. 

\

Notation: For any positive integer $k$, let $X_k$ denote any irreducible nodal curve of genus $g$. Let us denote its normalization by $q_k: X_0\rightarrow X_k$. Let us denote the nodes of $X_k$ by $\{y_1,\dots, y_k\}$ and the inverse image of the node $y_i$ under the normalization map by $\{x_i, z_i\}$ for every $i=1,\dots, k$. We fix such a nodal curve $X_k$. We denote its compactified Jacobian by $\overline{J}_k$ and its normalization by $\widetilde{J_k}$. 

\

In general, the variety $\overline{J_k}$ has the product of normal crossing singularities. Therefore, its normalization is a smooth variety. Let us denote it by $\widetilde{J_k}$. A direct way to study the geometry of $\overline{J_k}$ is to study how to construct back $\overline{J_k}$ from its normalization using the following diagram.
\begin{equation}\label{Diago}
\begin{tikzcd}
& \widetilde{J_k}\arrow{dl} \arrow{dr}\\
\overline{J_k} && J_{_{0}},
\end{tikzcd}
\end{equation}
where  the left arrow is the normalization of $\overline{J_k}$ and the right arrow is a fiber-product of $k$ many $\mathbb P^1$-bundles over $J_{0}$. The precise description of the right map is as follows. Let us fix a Poincar\'e bundle $\mathcal P$ over $X_0\times J_{_{0}}$ and let $\mathcal{P}_{x_j}$ denote the restriction of the Poincar\'e bundle $\mathcal{P}$ on $\{x_i\}\times J_0$. Similarly, let $\mathcal{P}_{z_i}$ denote the restriction of the Poincar\'e bundle $\mathcal{P}$ on $\{z_i\}\times J_0$. Then, one can show that

\[
\widetilde{ J}_k\cong \mathbb P_1\times_{J_0} \mathbb P_2\times_{J_0}\dots \times_{J_0} \mathbb P_k,
\]

where $\mathbb P_i:=\mathbb P(\mathcal P_{x_i}\oplus \mathcal P_{z_i})$ is the projective bundle over $J_0$ for $i=1,\dots,k$ (\cite{4} , \cite{11}).

For each $i=1,\dots, k$, there is a pair of divisors $\{D_i, D'_i\}$ on $\widetilde{J}_k$ corresponding to the two natural quotients of the vector bundles $\mathcal P_{x_i}\oplus \mathcal P_{z_i}$. All these divisors can also be described as suitable fiber products of $\mathbb P^1$-bundles over $J_0$. Moreover, there are $k$-many "twisted" isomorphisms $\tau_i: D_i\to D'_i$. They are called twisted isomorphisms because they do not commute with the projection morphisms $\widetilde{J}_k\rightarrow J_0$ (see \cite{11}). The compactified Jacobian $\overline{J_k}$ is a categorical quotient of $\widetilde{ J}_k$ under the identifications given by the isomorphisms $\{\tau_i\}^k_{i=1}$.  

\

\textbf{Problem:} Suppose we want to compute some invariant (e.g. the mixed Hodge numbers of the cohomology groups) of $\overline{J_k}$. A natural strategy would be first compute it for $J_0$ and then for $\widetilde{J_k}$ using the projective bundle description (see \ref{Diago}) of the map $\widetilde{J_k}\to J_0$ and then use the left map in the diagram \ref{Diago} to compute the invariant for $J_k$. But since the map $\widetilde{J_k}\to \mathcal J_k$ is a quotient under the twisted identifications $\{\tau_i\}^k_{i=1}$, the last step of this strategy becomes very complicated. Instead, we wish to construct a deformation/specialization of $\widetilde{J_k}$ along with the pairs of divisors and identifications such that after deformation, the identifications between the resulting divisors become untwisted, i.e., they commute with the projection maps to $J_0$. 

\

Before discussing the idea of the solution, let us first recall a definition.

\begin{defe} \label{Special}
A \textbf{specialization} of a projective variety $Z$ to another projective variety $Z_0$ is a proper flat family of varieties $\mathcal Z \rightarrow B$, where $B$ is an irreducible variety such that 
\begin{enumerate}
\item $\mathcal Z_{b_1}$ isomorphic to $Z$ for some $b_1\in B$ and 
\item $\mathcal Z_{b_2}$ is isomorphic to $Z_0$ for some $b_2\in B$.
\end{enumerate}
\end{defe}

We distinguish between a specialization and a deformation because a deformation is defined over a discrete valuation ring, and a specialization is defined over a general base. 

\

\textbf{The idea of the solution:} We construct an algebraic specialization 
\begin{equation}\label{special001}
\widetilde{\mathcal J_k}\rightarrow B^o_k
\end{equation}
of the fiber bundle $\widetilde{J}_k\to J_0$ (we do not deform $J_0$) over a suitable neighbourhood $B^o_k$ of the point $(z_1,\dots, z_k)$ in $(X_0)^k$. The specialization of $\widetilde{J}_k$ induces a specialization of its divisors $\{D_i, D'_i\}^k_{i=1}$ and the identifications $\{\tau_i\}^k_{i=1}$. Moreover, we show that on the fiber of $\widetilde{\mathcal J_k}\rightarrow B^o_k$ at the point $(z_1,\dots, z_k)\in B^o_k$ the induced identifications between the divisors become fiberwise i.e., they all commute with the projection maps to $J_0$. Then,  we construct the quotient under these global identifications. This produces a specialization ${\mathcal J_k}\rightarrow B^o_k$ of the variety $\overline {J_k}$ such that the fibre over the point $(x_1, \dots, x_k)\in B^o_k$ is isomorphic to the compactified Jacobian $\overline {J_k}$ of the irreducible nodal curve $X_k$ and the fiber at the point $(z_1,\dots, z_k)\in B^o_k$ is isomorphic to $J_0\times R^k$, where $R$ is the irreducible rational nodal curve with one node. The fiber over $(z_1,\dots, z_k)\in B^o_k$ becomes so simple because the induced identifications on the fiber of $\widetilde{\mathcal J_k}\rightarrow B^o_k$ at this point commute with the projection maps to $J_0$. Moreover, we show that the family ${\mathcal J_k}\rightarrow B^o_k$ is topologically trivial over the base $B^o_k$. As a consequence, it follows that $\overline {J_k}$ is homeomorphic to $J_0\times R^k$. Moreover, the higher direct image sheaves of the constant sheaf $\mathbb Q$ forms a variation of Hodge structures. Therefore, the mixed Hodge numbers of $\overline {J_k}$ are the same as that of $J_0\times R^k$.

\

\textbf{A possible generalisation.} The problem that we have discussed above also figures in the case of compactification of moduli of vector bundles of higher ranks on a nodal curve. There are two compactifications of moduli of vector bundles on a nodal curve, namely the moduli of torsion-free sheaves \cite{13} and the moduli of Gieseker-vector bundles \cite{G} and \cite{NS II}. It might be possible to generalise our strategy to the higher rank case as well.

\

\textbf{Outline of the paper:} Throughout this article, we will assume that all the curves are irreducible and defined over the field of complex numbers. This article is organized as follows.

\

In $\textbf{\S3}$, we recall a few definitions and constructions necessary for further discussion. \textbf{The results in this section should be well-known; we include these results here for the convenience of the reader.} We discuss torsion-free sheaves of rank one on nodal curves and generalised parabolic bundles of rank one, and then we discuss the compactified Jacobian of irreducible nodal curves, their singularities, and the description of the normalization map. We briefly recall the construction of the so-called $\Theta$ bundle on the Jacobian of smooth curves and on the compactified Jacobian of irreducible nodal curves. 

\

In $\textbf{\S4}$, we carry out the construction of the specialization (definition \eqref{Special}) in the case when the nodal curve has only one node. Here, we outline the strategy of constructing the specialization in the one node case in the following steps.

\

\begin{enumerate}
\item{\textbf{Step 1.}} Consider the family of projective bundles over $J_0$ 

\begin{center}
\begin{tikzcd}
\widetilde{\mathcal J_1}\arrow{d}{\widetilde{\mathfrak f}_1}\\
X_0
\end{tikzcd}
\end{center}
where 
\begin{enumerate}
\item $\widetilde{\mathcal J_1}:=\mathbb P(\mathcal P\oplus p_{2}^*\mathcal P_{z_1})$, 
\item $\mathcal P_{z_1}$ denotes the restriction line bundle $\mathcal P$ on $z_1\times J_0$, and 
\item $p_{2}: X_0\times J_0\rightarrow J_0$ is the projection map.
\end{enumerate}

We call the variety $\widetilde{\mathcal J_1}$ the \textbf{total space}.
\

\item{\textbf{Step 2.}} There are two natural Weil-divisors $\mathcal D_1$ and $\mathcal D'_1$ on $\widetilde{\mathcal J_1}:=\mathbb P(\mathcal P\oplus p_{2}^*\mathcal P_{z_1})$ given by the natural quotient line bundles $\mathcal P\oplus p_{2}^*\mathcal P_{z_1}\rightarrow \mathcal P$ and $\mathcal P\oplus p_2^*\mathcal P_{z_1}\rightarrow p_2^*\mathcal P_{z_1}$, respectively. Both $\mathcal D_1$ and $\mathcal D'_1$ are isomorphic to $X_0\times J_0$ via the projection map $\widetilde{\mathcal J_1}\to X_0\times J_0$. If we fix a point $x\in X_0$, and denote by $\widetilde{\mathcal J_{1,x}}, \mathcal D_{1,x}, \mathcal D'_{1,x}$ the fibres over $x\in X_0$ of the composite maps $\widetilde{\mathcal J_{1}}\to X_0\times J_0\to X_0, \mathcal D_1\to X_0\times J_0\to X_0, \mathcal D'_1\to X_0\times J_0\to X_0$ respectively, then notice that $\widetilde{\mathcal J_{1,x}}=\mathbb P(\mathcal P_x\oplus \mathcal P_{z_1})$ and $\mathcal D_{1,x}$ and $\mathcal D'_{1,x}$ correspond to the two natural quotient line bundles, namely $\mathcal P_x\oplus \mathcal P_{z_1}\to \mathcal P_x$ and $\mathcal P_x\oplus \mathcal P_{z_1}\to \mathcal P_{z_1}$ over $J_0$. See subsection \ref{twodiv} and the figure \ref{fig} for details. 

\item{\textbf{Step 3.}} There is a "twisted" isomorphism $\tau_1: \mathcal D_1\rightarrow \mathcal D'_1$. We call it a "twisted" isomorphism because it does not commute with the projection morphism $\widetilde{\mathfrak f}_1: \widetilde{\mathcal J_1}\rightarrow X_0\times J_0$. If $x\in X_0$ and $x\neq z_1$, then the following diagram is not commutative

\begin{equation}
\begin{tikzcd}
D_{1,x}\arrow["\tau_{1,x}"]{rr} \arrow{dr} && D'_{1,x}\arrow{dl}\\
& \{x\}\times J_0
\end{tikzcd}
\end{equation}

because $\tau_1$ maps the fibre over $[L]\in J_0$ to the fibre over $[L':=L\otimes \mathcal O(z_1-x)]\in J_0.$ This also explains why $\tau_{1,z_1}: D_{1,z_1}\to D'_{1,z_1}$ is an "untwisted"-isomorphism, because $L':=L\otimes \mathcal O(z_1-z_1)=L$. In other words, we have the following commutative diagram

\begin{equation}
\begin{tikzcd}
D_{1,z_1}\arrow["\tau_{1,z_1}"]{rr} \arrow{dr} && D'_{1,z_1}\arrow{dl}\\
& \{z_1\}\times J_0
\end{tikzcd}
\end{equation}

See Proposition \ref{Involution1}, figure \ref{fig}, Remarks \ref{Twist} and \ref{twist2024} and also the proof of the second statement of Theorem \ref{D2024} for details. 

\item{\textbf{Step 4.}}  Using a push-out construction by Artin, we construct a family of algebraic spaces 
\begin{center}
\begin{tikzcd}
\mathcal J_1\arrow{d}{\mathfrak f_1}\\
X_0
\end{tikzcd}
\end{center}
where $\mathcal J_1$ is the algebraic space constructed as a quotient of $\widetilde{\mathcal J_1}$ by identifying the divisors $\mathcal D_1$ and $\mathcal D'_1$ using the twisted isomorphism $\tau_1: \mathcal D_1\to \mathcal D'_1$. Notice since $\tau_{1,z_1}: \mathcal D_{1,z_1}\to \mathcal D'_{1,z_!}$ is an untwisted isomorphism, the fibre of the map $\mathfrak f_1: \mathcal J_1\to X_0$ over the point $z_1\in X_0$ is isomorphic to $\frac{\mathbb P(\mathcal P_{z_1}\oplus \mathcal P_{z_1})}{~\tau_{1,z_1}}\cong J_0\times \mathbb P^1$. This is why we get an untwisted specialisation of $\overline{J_k}$ at the point $z_1\in X_0$. See figure \ref{fig} and also the proof of the second statement of Theorem \ref{Descent} for details. 

\item{\textbf{Step 6.}} We show that $\mathcal J_1$ has normal crossing singularities. 

\item{\textbf{Step 7.}} We fix a point $p_0$ different from $x_1$ and $z_1$. Using the choice of the point, we define a line bundle $\widetilde{\Theta_1}$ on $\widetilde{\mathcal J_1}$ which is relatively ample for the map $\widetilde{\mathfrak f}_1: \widetilde{\mathcal J_1}\rightarrow X_0$.
\item{\textbf{Step 8.}} Here we define and henceforth work with a new base $B^o_1:=X_0\setminus \{p_0\}$ instead of $X_0$. We show that the line bundle $\widetilde{\Theta_1}$ descends to $\mathcal J_1|_{_{B^o_1}}$. Therefore, it follows that the morphism $\mathfrak f_1: \mathcal J_1|_{_{B^o_1}}\rightarrow B^o_1$ is projective. This is the desired family of specialisations in one-node case.
\end{enumerate}
\begin{figure}\label{fig}
\centering

\begin{tikzpicture}

\draw[red,thick,dashed] (-2,11) rectangle (14, 17);
\draw (-1.5,10.6) node{\textcolor{red}{\tiny $\widetilde{\mathcal J_1}$}};

\draw (0,12) rectangle (4,16); \draw (8,12) rectangle (12,16);
\draw (0, 12.5) -- (4,12.5); \draw (0, 15.5) -- (4,15.5); \draw (.5, 12) -- (.5,16); \draw (3.5, 12) -- (3.5,16);
\draw (8, 12.5) -- (12,12.5); \draw (8, 15.5) -- (12, 15.5); \draw (8.5, 12) -- (8.5,16); \draw (11.5, 12) -- (11.5,16);

\draw (4,6) node{.};

\draw [-stealth](6,10) -- (6,9);
\draw[red, -stealth] (8.6,15.3) .. controls (9.9,13.7) and (9.7,13.7) .. (8.6,12.7);
\draw[red, -stealth] (11.6,15.3) .. controls (12.6,13.7) and (12.6,13.7) .. (11.6,12.7);

\draw (-.5, 15.5) node{\tiny $D_{1, x_1}$};
\draw (-.5, 12.5) node{\tiny $D'_{1, x_1}$};
\draw [-stealth, blue, dashed](-.9,13) -- (.4,13); \draw (-2.1,13) node{\tiny $\mathbb P(L_{x_1}\oplus L_{z_1})$};
\draw [-stealth, blue, dashed](-.9,15) -- (3.5,15); \draw (-2.1,15) node{\tiny $\mathbb P(L'_{x_1}\oplus L'_{z_1})$};
\draw (4,13.9) node{\textcolor{red}{\tiny ${[L\mapsto L':=L\otimes \mathcal O(x_1-z_1)]}$}};
\draw (1.5,13.9) node{\textcolor{red}{\tiny ${\tau}$}};

\draw (.9,5.5) node{\tiny $L$}; \draw (3.6,5.5) node{\tiny $L':=L\otimes \mathcal O(x_1-z_1)$};

\draw (2.1,4.5) node{\tiny $x_1\times J_0$}; \draw (10.1,4.5) node{\tiny $z_1\times J_0$}; 

\draw (8.9,5.5) node{\tiny $L=L\otimes \mathcal O(z_1-z_1)$}; 
\foreach \Point in {(.5,6), (3.5,6), (8.5,6)}{
    \node at \Point {\textbullet};
}

\draw [red, -stealth](.6,15.3) -- (3.3,12.7); 
\draw [-stealth](6,10) -- (6,9);

\draw [red, -stealth, dashed](-.3,17.3) -- (.4,15.7); \draw (-.3, 17.6) node{\tiny $(L, L_{x_1}\oplus L_{z_1}\rightarrow L_{x_1})$};

\draw [red, -stealth, dashed](7.7,17.3) -- (8.4,15.7); \draw (7.7, 17.6) node{\tiny $(L, L_{z_1}\oplus L_{z_1}\xrightarrow{1^{st} proj} L_{z_1})$};

\draw [blue, -stealth, dashed](-.3,17.3) -- (.4,15.7); \draw (-.3, 17.6) node{\tiny $(L, L_{x_1}\oplus L_{z_1}\rightarrow L_{x_1})$};

\draw [blue, -stealth, dashed](7.7,17.3) -- (8.4,15.7); \draw (7.7, 17.6) node{\tiny $(L, L_{z_1}\oplus L_{z_1}\xrightarrow{1^{st} proj} L_{z_1})$};

\draw [-stealth, blue, dashed](7.1,15) -- (8.5,15); \draw (6,15) node{\tiny $\mathbb P(L_{z_1}\oplus L_{z_1})$};

\draw (9,13.9) node{\textcolor{red}{\tiny ${[L\mapsto L':=L\otimes \mathcal O(x_1-z_1)]}$}};

\draw (9,14.5) node{\textcolor{red}{\tiny ${\tau}$}};

 \draw (10.1,11.5) node{\tiny $\widetilde{\mathcal J}_{1,z_1}$}; 
 \draw (2.1,11.5) node{\tiny $\widetilde{\mathcal J}_{1,x_1}$}; 

\draw [blue, -stealth, dashed](7.7,10.7) -- (8.4,12.3); \draw (7.9, 10.4) node{\tiny $(L, L_{z_1}\oplus L_{z_1}\xrightarrow{2^{nd} proj} L_{z_1})$};

\draw [blue, -stealth, dashed](2.7,10.7) -- (3.4,12.3); \draw (2.6,10.5) node{\tiny $(L', L'_{x_1}\oplus L'_{z_1}\xrightarrow{} L'_{z_1})$};

\draw (7.5, 15.5) node{\tiny $D_{1, z_1}$};
\draw (7.5, 12.5) node{\tiny $D'_{1, z_1}$};
\draw (6,6) node{$\dots \dots$};
\draw (6,13) node{$\dots \dots$};

\draw[red,thick,dashed] (-2,4) rectangle (14, 8);

\draw (-1.5,3.6) node{\textcolor{red}{\tiny $X_0\times J_0$}};

\draw (2,6) ellipse (2.5cm and 1cm);   \draw (10,6) ellipse (2.5cm and 1cm);

\draw [-stealth](6,3) -- (6,2);

\end{tikzpicture}

\includegraphics[scale=.4]{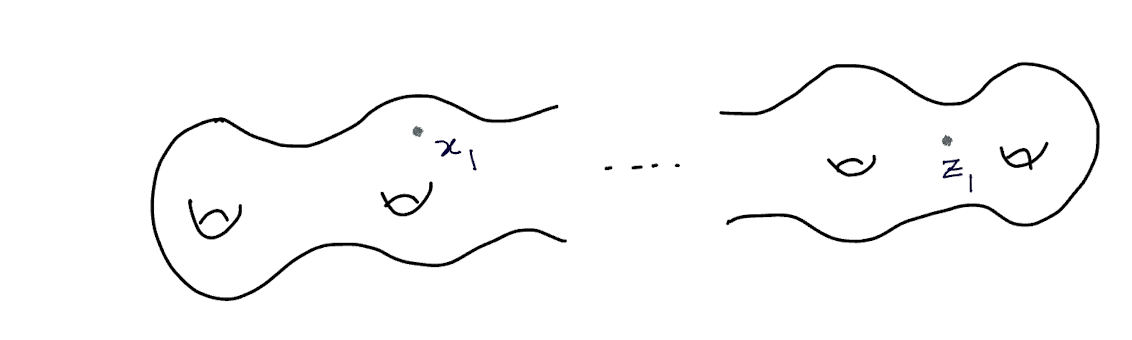}

 \caption{one node case}
\end{figure}

\

In $\textbf{\S5}$, we generalise this construction for any irreducible nodal curve with $k$ nodes for any positive integer $k$. The construction is very similar to the construction in the one-node case. In this case, we start with the following variety as the \textbf{total space}
\begin{center}
\begin{tikzcd}
\widetilde{\mathcal J_k}\arrow{d}{\widetilde{\mathfrak f}_k}\\
X_0^k
\end{tikzcd} 
\end{center}
where
\begin{enumerate}
\item $\widetilde{\mathcal J}_k:=\mathbb P(p_{1, k+1}^*\mathcal P\oplus p_{k+1}^* \mathcal P_{z_1})\times_{_{X_0^k\times J_0}}\cdots\times_{_{X_0^k\times J_0}} \mathbb P(p_{k,k+1}^*\mathcal P\oplus p_{k+1}^* \mathcal P_{z_k})$,
\item $p_{i,k+1}: X_0^k\times J_0\rightarrow X_0\times J_0$ denotes the projection to the product of the $i$-th copy of $X_0$ and $J_0$.
\item $p_{k+1}$ denotes the projection $X_0^k\times J_0\rightarrow J_0$.
\end{enumerate}

 There are $k$ pairs of divisors $\{\mathcal{D}_i, \mathcal{D}'_i\}^k_{_{i=1}}$, where $\mathcal D_i$ and $\mathcal D'_i$ are the two natural divisors pulled back from $\mathbb P(p_{i, k+1}^*\mathcal P\oplus p_{k+1}^* \mathcal P_{z_i})$. There are  $k$ natural "twisted" isomorphisms $\tau_i:\mathcal{D}_i\rightarrow \mathcal{D}'_i$  (see Lemma \ref{LemaInvo} for details). Intuitively, $\tau_i$ is the pullback of the twisted isomorphism between the two natural divisors in $\mathbb P(p_{i, k+1}^*\mathcal P\oplus p_{k+1}^* \mathcal P_{z_i})$. Unlike the single node case though, here the isomorphisms $\tau_i$ are not defined everywhere. The isomorphisms exist only when we focus on the following new base 
$$
B_k:=\{(b_1,b_2,\dots,b_k)\in X_0^k ~~|~~b_i\neq b_j ~~\text{and}~~b_i\neq z_j~~ \text{ for }1\leq i,j\leq k \text{ and }i\neq j \}.
$$
We therefore restrict our attention over $B_k$.

We construct a quotient space $\mathcal{J}_k$ over $B_k$, inductively, as a quotient of $\widetilde{\mathcal J_k}$ by identifying the divisors in every pair using the twisted isomorphisms between them. Repeated application of proposition \cite[Theorem 3.1]{1} and \cite[Theorem 45 (Gluing of algebraic spaces).]{K} shows that the quotient space is an algebraic space. We denote it by $\mathfrak f_k: \mathcal J_k\to B_k$.

We further show that the algebraic space $\mathcal J_k$ has product of normal crossing singularities. More precisely, the analytic local ring at a singular point is formally smooth to $\frac{\mathbb C[u_1,v_1,\cdots, u_k,v_k]}{u_1\cdot v_1,\dots, u_i\cdot v_i}$ for some $1\leq i\leq k$.
 We choose and fix a point $p_0\in X_0\setminus \{x_1,\dots, x_k, z_1,\dots, z_k\}$. With this choice of point, we define a line bundle $\widetilde{\Theta}_k$ on $\widetilde{\mathcal J}_k$ which is relatively ample for the morphism $\widetilde{\mathfrak f}_k: \widetilde{\mathcal J}_k\rightarrow B_k$. We refer to it as the Theta bundle on $\widetilde{\mathcal J}_k$. But the morphism ${\mathfrak f}_k: \mathcal J_k\to B_k$ may not be a projective morphism. To get projectivity we need to choose a further smaller open set of $X_0^k$ and focus on it. The new base is the following
 
 $$
 B^o_k:=\{(x_1,\dots,x_k)\in B_k~~|~~x_i\neq p_0~~\text{for all}~~1\leq i\leq k\}. 
 $$
 
 We show that over this new base the line bundle $\widetilde{\Theta}_k$ descends to $\mathcal J_k|_{_{B^o_k}}$. This implies that the morphism $\mathfrak f_k: \mathcal J_k|_{_{B^o_k}}\rightarrow B^o_k$ is projective. This is the desired family of specialisations in the multinode case. We summarise the content of \textbf{\S5} in the following theorems.

\

\begin{thm}
The quotient space $\mathcal{J}_k$ is an algebraic space and has the product of normal crossing singularities.
\end{thm}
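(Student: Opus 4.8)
The plan is to deduce both assertions from the explicit inductive construction of $\mathcal{J}_k$ as a tower of Artin pushouts, treating the algebraic-space claim first and the local structure of the singularities afterwards.

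\textbf{Algebraic space.} I would realise $\mathcal{J}_k$ as the top of a finite tower
\[
\widetilde{\mathcal J}_k =: \mathcal J_k^{(0)} \to \mathcal J_k^{(1)} \to \cdots \to \mathcal J_k^{(k)} =: \mathcal J_k ,
\]
where $\mathcal J_k^{(m)}$ is obtained from $\mathcal J_k^{(m-1)}$ by identifying the images of the divisors $\mathcal{D}_m$ and $\mathcal{D}'_m$ via the descent of the twisted isomorphism $\tau_m$. The base of the induction uses that inside the $m$-th factor $\mathbb P(p_{m,k+1}^{*}\mathcal P\oplus p_{k+1}^{*}\mathcal P_{z_m})$ the two tautological sections attached to the two summands are disjoint; pulling back, $\mathcal{D}_m\cap\mathcal{D}'_m=\varnothing$ in $\widetilde{\mathcal J}_k$, so $\mathcal{D}_m\sqcup\mathcal{D}'_m$ is a closed subspace and the map $\mathcal{D}_m\sqcup\mathcal{D}'_m\to\mathcal{D}_m$ (identity on $\mathcal{D}_m$, $\tau_m^{-1}$ on $\mathcal{D}'_m$) is finite. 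Proposition \ref{Artin} then yields the pushout $\mathcal J_k^{(m)}=\mathcal J_k^{(m-1)}\sqcup_{\mathcal{D}_m\sqcup\mathcal{D}'_m}\mathcal{D}_m$ as an algebraic space, together with a finite surjection $\mathcal J_k^{(m-1)}\to\mathcal J_k^{(m)}$. The content of the induction step is that, after the first $m-1$ identifications, $\mathcal{D}_m$ and $\mathcal{D}'_m$ still have disjoint closed images and $\tau_m$ descends to an isomorphism between those images; this follows from the compatibility of the $\tau_i$ recorded in Lemma \ref{LemaInvo} — concretely that over $B_k$ one has $\tau_j(\mathcal{D}_j\cap\mathcal{D}_m)=\mathcal{D}'_j\cap\mathcal{D}_m$, $\tau_j(\mathcal{D}_j\cap\mathcal{D}'_m)=\mathcal{D}'_j\cap\mathcal{D}'_m$, and $\tau_j\tau_m=\tau_m\tau_j$ wherever both sides are defined, so that earlier gluings move $\mathcal{D}_m$ to $\mathcal{D}_m$ and never collide $\mathcal{D}_m$ with $\mathcal{D}'_m$. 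Composing, one gets a finite surjection $\pi:\widetilde{\mathcal J}_k\to\mathcal J_k$ and exhibits $\mathcal J_k$ as an algebraic space (not a priori a scheme, which is exactly why the statement is phrased with algebraic spaces).

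\textbf{Singularities.} Since $B_k$ is open in $X_0^k$ it is smooth, hence $B_k\times J_0$ is smooth, and as a fibre product of $\mathbb P^1$-bundles over $B_k\times J_0$ the total space $\widetilde{\mathcal J}_k$ is smooth; the $2k$ divisors $\mathcal{D}_1,\mathcal{D}'_1,\dots,\mathcal{D}_k,\mathcal{D}'_k$ are smooth and, being pulled back from distinct factors with $\mathcal{D}_i\cap\mathcal{D}'_i=\varnothing$, form a simple normal crossings configuration, so each divisor glued at a given stage is transverse to the singular locus already created. Fix $p\in\mathcal J_k$, choose $q\in\pi^{-1}(p)$, and put $I:=\{\,j:q\in\mathcal{D}_j\cup\mathcal{D}'_j\,\}$, $i:=\#I$. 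Using the commutation relations of Lemma \ref{LemaInvo} I would check that $\pi^{-1}(p)$ is precisely the combinatorial cube obtained from $q$ by applying, for each $j\in I$, either the identity or $\tau_j^{\pm1}$ (so $\#\pi^{-1}(p)=2^{i}$), and that no point of $\pi^{-1}(p)$ lies on $\mathcal{D}_j\cup\mathcal{D}'_j$ for $j\notin I$; in particular the $j$-th gluing for $j\notin I$ takes place away from $\pi^{-1}(p)$ and does not affect the local ring at $p$. Because at each stage the pushout is along disjoint smooth divisors transverse to the existing singular locus and $\pi$ is finite, $\widehat{\mathcal O}_{\mathcal J_k,p}$ is the iterated fibre product of the regular complete local rings $\widehat{\mathcal O}_{\widetilde{\mathcal J}_k,q'}$, $q'\in\pi^{-1}(p)$, amalgamated along the completions of the $\mathcal{D}_j$ ($j\in I$) with gluing given by the $\tau_j$. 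Choosing at each $q'$ a regular system of parameters in which the divisor of the $j$-th pair through $q'$ is a coordinate hyperplane and the remaining coordinates are matched up by the $\tau_j$, this iterated fibre product is evaluated by $i$ applications of the elementary identity $A[[u]]\times_{A}A[[v]]\cong A[[u,v]]/(uv)$ to be
\[
\widehat{\mathcal O}_{\mathcal J_k,p}\;\cong\;\mathbb{C}[[u_1,v_1,\dots,u_i,v_i,t_1,\dots,t_m]]\big/(u_1v_1,\dots,u_iv_i),
\]
which is the assertion that $\mathcal J_k$ is formally smooth to $\mathbb{C}[u_1,v_1,\dots,u_i,v_i]/(u_1v_1,\dots,u_iv_i)$ at $p$, with $i=\#I\le k$ (and $i=0$, i.e.\ smooth, off $\bigcup_j(\mathcal{D}_j\cup\mathcal{D}'_j)$).

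\textbf{Main obstacle.} Neither the pushout machinery (Proposition \ref{Artin}) nor the complete-local-ring computation is the hard part — both are essentially formal. The delicate point is establishing that the twisted isomorphisms $\tau_1,\dots,\tau_k$ are \emph{mutually compatible over the base $B_k$}: that they pairwise commute on their common loci of definition and carry the strata $\mathcal{D}_j\cap\mathcal{D}_{j'}$, $\mathcal{D}_j\cap\mathcal{D}'_{j'}$ to the expected strata. This is exactly where the defining inequalities $b_i\neq b_j$ and $b_i\neq z_j$ ($i\neq j$) of $B_k$ enter, and it is what forces the whole construction to be carried out over $B_k$ rather than over $X_0^k$; it is the job of Lemma \ref{LemaInvo}. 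Granting that compatibility, the tower of pushouts is well defined and each step contributes exactly one node direction, yielding both halves of the theorem.
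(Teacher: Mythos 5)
Your proposal is correct and follows essentially the same route as the paper: an inductive tower of Artin pushouts, with the disjointness of (the images of) $\mathcal D_j$ and $\mathcal D'_j$ and the compatibility of the $\tau_i$ from Lemma \ref{LemaInvo} justifying each step, followed by a complete-local-ring computation identifying $\widehat{\mathcal O}_{\mathcal J_k,p}$ as an iterated fibre product isomorphic to $\mathbb C[[u_1,v_1,\dots,u_i,v_i,t_1,\dots,t_m]]/(u_1v_1,\dots,u_iv_i)$. The only cosmetic difference is that the paper performs the local computation one gluing at a time via the exact sequence $0\rightarrow \mathcal O_Y\rightarrow \nu_*\mathcal O_X\rightarrow \mathcal O_{D'}\rightarrow 0$, rather than amalgamating over the whole $2^i$-point fibre at once.
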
 

\

\begin{thm}\begin{enumerate}

\item The line bundle $\widetilde{\Theta_k}$ on $\widetilde{\mathcal J_k}$ descends to $\mathcal{J}_k$. In other words, $\mathfrak f_k: \mathcal{J}_k\rightarrow B^o_k$ is a projective morphism.
\item Let $X_k$ be a nodal curve with nodes at $y_i$ and $q_k:X_0\rightarrow X_k$ be the normalization with $q_k^{-1}(y_i)=\{x_i,z_i\}$. Then
\begin{align*}
\mathfrak f_k^{-1}(b_1,\dots, b_k)=&\overline{J}_{X(b_1,\dots,b_k)}\hspace{50pt}\text{when} ~~(b_1,\cdots, b_k)\in B^o_k ~~\text{and}~~ b_i\neq z_i~ \forall i =1,\dots, k,\\
=&J_0\times R^k\hspace{60pt}\text{when}~~b_i=z_i~\forall i=1,\cdots, k,
\end{align*}
\end{enumerate}
where $X(b_1,\dots,b_k)$ denotes the nodal curve obtained from $X_0$ by identifying $b_i$ with $z_i$ for every $i=1, \dots, k$ and $\overline J_{X(b_1,\dots, b_k)}$ denotes its compactified Jacobian. 
\end{thm}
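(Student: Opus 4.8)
The plan is to prove the two assertions in turn, building on the one-node analysis of \S4 (Steps~7--8) and the push-out and line-bundle-descent machinery recalled in \S3. For part (1), write $\nu_k\colon\widetilde{\mathcal J}_k|_{B^o_k}\to\mathcal J_k|_{B^o_k}$ for the natural finite surjection coming from the iterated push-out. By the descent criterion for line bundles along such a push-out (proposition~\ref{Artin} and the descent statements of \S3), a line bundle $\mathcal L$ on $\widetilde{\mathcal J}_k|_{B^o_k}$ descends along $\nu_k$ precisely when for each $i$ one is given an isomorphism $\psi_i\colon\tau_i^*\bigl(\mathcal L|_{\mathcal D'_i}\bigr)\xrightarrow{\ \sim\ }\mathcal L|_{\mathcal D_i}$ and these are mutually compatible on the pairwise intersections $\mathcal D_i\cap\mathcal D_j$, $\mathcal D_i\cap\mathcal D'_j$, $\mathcal D'_i\cap\mathcal D'_j$ for $i\neq j$. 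Two features make this cocycle harmless over $B^o_k$: the product-of-normal-crossings statement established just above says the loci glued for distinct indices meet transversally, and the twisted isomorphism $\tau_i$ carries $\mathcal D_j,\mathcal D'_j$ ($j\neq i$) to themselves up to the canonical identification of $\mathbb P(V)$ with $\mathbb P(V\otimes L)$; so the whole condition decouples into the $k$ separate per-node conditions, each of which is the one-node statement verified in Step~8.

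It then remains to produce $\psi_i$ for $\mathcal L=\widetilde{\Theta}_k$. Here $\mathcal D_i$ and $\mathcal D'_i$ are each isomorphic over $J_0$ to the total space built from the remaining $k-1$ factors, and $\tau_i$ covers the translation $t_{\mathcal O_{X_0}(b_i-z_i)}$ of $J_0$ together with the canonical identifications of the other $\mathbb P^1$-factors. The line bundle $\widetilde{\Theta}_k$ is built from the Theta bundle of $J_0$ and the tautological divisor classes $\mathcal O(\mathcal D_i)$, the choice of base point $p_0$ serving to fix the twist; the constraint $b_i\neq p_0$ defining $B^o_k$ is exactly what makes $t_{\mathcal O_{X_0}(b_i-z_i)}^*$ of the relevant restriction agree with $\widetilde{\Theta}_k|_{\mathcal D_i}$ on the nose, yielding $\psi_i$. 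I would organise this as an induction on $k$: having descended $\widetilde{\Theta}_k$ after gluing the first $k-1$ pairs, gluing the $k$-th pair introduces one further translation, supported away from the earlier gluing data over $B^o_k$, and the same computation applies. Once $\widetilde{\Theta}_k$ descends to a line bundle $\Theta_k$ on $\mathcal J_k|_{B^o_k}$, relative ampleness of $\widetilde{\Theta}_k$ for $\widetilde{\mathfrak f}_k$ together with finiteness of $\nu_k$ gives relative ampleness of $\Theta_k$ for $\mathfrak f_k$ (ampleness descends along finite surjections), so $\mathfrak f_k\colon\mathcal J_k|_{B^o_k}\to B^o_k$ is projective.

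For part (2), I would first note that forming the push-out commutes with restriction to a point of $B^o_k$: since $\widetilde{\mathfrak f}_k$ is flat, the $\mathcal D_i$ are flat over $B^o_k$, and the $\tau_i$ are defined over all of $B^o_k$, the fibre $\mathfrak f_k^{-1}(b)$ over $b=(b_1,\dots,b_k)$ is the push-out of $\widetilde{\mathcal J}_k|_b=\mathbb P(\mathcal P_{b_1}\oplus\mathcal P_{z_1})\times_{J_0}\cdots\times_{J_0}\mathbb P(\mathcal P_{b_k}\oplus\mathcal P_{z_k})$ along $\tau_1|_b,\dots,\tau_k|_b$, that is, the space obtained by gluing the two natural sections in each factor by the twisted isomorphism. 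When $b\in B^o_k$ with $b_i\neq z_i$ for every $i$, identifying $b_i$ with $z_i$ in $X_0$ produces an irreducible nodal curve $X(b_1,\dots,b_k)$ with exactly $k$ nodes and normalisation $X_0$, and the generalised-parabolic-bundle description of its compactified Jacobian recalled in \S3 (compare diagram \eqref{Diago}) identifies $\overline J_{X(b_1,\dots,b_k)}$ with precisely this glued space, giving the first line. When instead $b_i=z_i$ for all $i$, each factor degenerates: $\mathbb P(\mathcal P_{z_i}\oplus\mathcal P_{z_i})=\mathbb P(\mathcal P_{z_i}\otimes\mathcal O_{J_0}^{\oplus 2})\cong J_0\times\mathbb P^1$, so $\widetilde{\mathcal J}_k|_b\cong J_0\times(\mathbb P^1)^k$; the divisors $\mathcal D_i,\mathcal D'_i$ become the constant sections $J_0\times\{0\}$ and $J_0\times\{\infty\}$ in the $i$-th $\mathbb P^1$; and $\tau_i|_b$, which covers $t_{\mathcal O_{X_0}(z_i-z_i)}=t_{\mathcal O_{X_0}}=\mathrm{id}$, becomes the identity under the natural identifications and is therefore fibrewise. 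Gluing $J_0\times\{0\}$ to $J_0\times\{\infty\}$ by the identity of $J_0$ in the $i$-th factor replaces that $\mathbb P^1$ by the irreducible rational nodal curve $R=\mathbb P^1/(0\sim\infty)$; doing this in all $k$ factors gives $\mathcal J_k|_b\cong J_0\times R^k$, the second line.

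The step I expect to be the main obstacle is part (1): computing $\widetilde{\Theta}_k|_{\mathcal D_i}$ and $\widetilde{\Theta}_k|_{\mathcal D'_i}$ explicitly enough to exhibit $\psi_i$, and then seeing that $\psi_1,\dots,\psi_k$ are simultaneously compatible. The precise form of $\widetilde{\Theta}_k$ and the role of the constraint $b_i\neq p_0$ have to be used carefully, and the inductive bookkeeping over the moving base $B^o_k$ --- keeping the $k$ translations $t_{\mathcal O_{X_0}(b_i-z_i)}$ mutually ``disjoint'' so that earlier gluings are untouched --- is the delicate point; the product-of-normal-crossings statement established above is exactly what keeps the cocycle conditions from obstructing the descent. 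A secondary technical matter is the base-change compatibility of the push-out used in part (2), which I would derive from flatness of all the data over $B^o_k$ together with the local normal-crossing model of $\mathcal J_k$.
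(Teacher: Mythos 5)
Your proposal follows essentially the same route as the paper: restrict $\widetilde{\Theta}_k$ to each pair $\mathcal D_i,\mathcal D'_i$, compare via $\tau_i^*$, observe that the discrepancy is $\tilde s_i^*\mathcal O_{X_0}(-p_0)^{\otimes-(g-1)}$, which trivialises exactly over $B^o_k$, descend step by step through the iterated push-out using Proposition \ref{Des1023}, and deduce projectivity of $\mathfrak f_k$ from relative ampleness of $\widetilde{\Theta}_k$ plus finiteness of $\nu_k$; part (2) is likewise handled by restricting the push-out to fibres and reducing to the one-node computation of Theorem \ref{Descent}. The one substantive step you defer --- the explicit chain of isomorphisms computing $\tau_i^*(\widetilde{\Theta}_k|_{\mathcal D'_i})$ --- is precisely what the paper's proof of (1) consists of (via Lemma \ref{relations123} and the identity \eqref{eqa2} stating $\tau_i^*(\mathcal L_j|_{\mathcal D'_i})=\mathcal L_j|_{\mathcal D_i}$ for $j\neq i$), and it is that identity together with Lemma \ref{LemaInvo}(2), rather than the normal-crossings transversality you invoke, that makes the $k$ descent conditions decouple.
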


\

In $\textbf{\S6}$, we study the natural stratification of the variety $\mathcal J_k$ given by its successive singular locus. We show that this stratification satisfies the Whitney's conditions. Then, by using Thom-Mather's first isotropy lemma, we conclude the following.

\

\begin{thm}\label{GG}
\begin{enumerate}
\item The morphism $ \mathfrak f_k:\mathcal J_k\rightarrow B^o_k$ is topologically locally trivial.
\item $\mathcal{R}^i \mathfrak f_{k*}\mathbb Q$ forms a \text{variation of mixed Hodge structures} over $B^o_k$.
\end{enumerate}
\end{thm}

\

In \textbf{\S7}, we discuss some applications of the construction of the specialization. As a corollary of the above theorem \ref{GG}, we see that the Betti numbers of $\overline J_k$ and the mixed Hodge numbers of the cohomology groups of $\overline{J}_k$ are the same as the Betti numbers of $J_0\times R^k$ and the mixed Hodge numbers of the cohomology groups of $J_0\times R^k$, respectively. We compute the Betti numbers and mixed Hodge numbers of the cohomology groups of the latter using the Kunneth formula.

\

\begin{thm}

\begin{enumerate}

\item Then $i$-th betti number of $\bar{J}_k$
\begin{equation}\label{we123}
h^{i}(\bar{J}_k)=h^{i}\left(J_0\times R^k\right)=\sum_{0\leq l\leq \tt{min}\{i,2k\}}\binom{2(g-k)}{i-l}.\sum_{\frac{1}{2}\leq j\leq \tt{min}\{l,k\}}\binom kj. \binom j{2j-l}.
\end{equation}

\item The dimension of $gr_{l}^{W}\left(H^{i}(\bar{J}_k)\right)$ is
\begin{equation}\label{we1234}
\dim_{\mathbb{Q}}\gr_{l}^{W}\left(H^{i}(\bar{J}_k)\right)= \sum_{0\leq t\leq l, (l-t)~ \text{is even}} \binom {2(g-k)}{t}.\binom {k}{i-\frac{l-t}{2}}. \binom{i-\frac{l-t}{2}}{i-l+t} 
\end{equation}
and
\item
For $p$, $q\geq0$ such $p+q=l$, the dimension of 
\[\dim_{\mathbb{C}}\gr_F^p\gr^p_{\bar{F}}\left(\gr_{l}^{W}\left(H^{i}(\bar{J}_k)\right)\right)=\sum_{0\leq t\leq l, (l-t)~ \text{is even}} \binom {g-k}{p-\frac{l-t}{2}}\binom{g-k}{q-\frac{l-t}{2}}\binom {k}{i-\frac{l-t}{2}}. \binom{i-\frac{l-t}{2}}{i-l+t}\]
 \end{enumerate}
\end{thm}

\

\subsection{Acknowledgement} We are grateful to the referee for the useful comments and suggesting very interesting references related to this work. 

\newpage
\begin{table}[h]\label{Not1}
    \caption{Notation and Convention}
    \begin{tabularx}{\textwidth}{p{0.40\textwidth}X}
   \\ \toprule
      \\ 
      $\mathbb C$ & The field of complex numbers.   \\ \\
      
      $g$ & arithmetic genus of the nodal curve, under study.\\ \\
      
      & We will work with $\mathbb C$ as our base field. \\ \\
      
    $\binom nr$  & $\frac{n!}{r!\cdot (n-r)!}$, $\binom rn:=0  ~~\text{for}~~r<n$ \\ \\
    
     $p_{i_1\cdots i_l}: Z_1\times\cdots\times Z_n\rightarrow \prod^{j=l}_{j=1} Z_{i_j}$ & Let $n$ be an integer and $Z_1,\dots, Z_n$ are $n$ varieties. For any ordered subset ${i_1<\cdots<i_l}$ of $\{1,\dots, n \}$, we denote by $p_{i_1\cdots i_l}$ the obvious projection morphism $Z_1\times\cdots\times Z_n\rightarrow \prod^{j=l}_{j=1} Z_{i_j}$. \\  \\
     
     $R$ & the rational nodal curve with a single node. \\ \\
     $X_0$ & a smooth projective curve of genus $g$ \\  \\
     
     $X_k$ & an irreducible nodal curve of arithmetic genus $g$ with $k$ nodes \\ \\
     
     $X_0^k$ & $\underbrace{X_0\times \cdots \times X_0}_{k~~\text{times}}$ \\ \\

          $q_k: X_0\to X_k$ &  The normalization map \\  \\
          
           $X(x_1,\dots,x_k)$ & Fix $k$ different points $\{z_1,\dots,z_k\}$ of $X_0$. Then for any $\{x_1,\dots, x_k\}\in (X_0\setminus \{z_1,\dots, z_k\})$, we denote by $X(x_1,\dots,x_k)$ the irreducible nodal curve $X(x_1,\dots,x_k)$ constructed as a quotient of $X_0$ by identifying $x_i$ with $z_i$ for every $1\leq i\leq k$. \\  \\

      \bottomrule
     \end{tabularx}
\end{table}

\newpage

\begin{table}[h]
    \caption{Notation and Convention}
    \begin{tabularx}{\textwidth}{p{0.40\textwidth}X}
   \\ \toprule
      \\ 
                  $J_0$ & the Jacobian of the curve $X_0$ \\ \\
		$\overline{J_k}$ & the compactified Jacobian of $X_k$ \\ \\
		$\mathcal P$ &  a Poincar\'e line bundle on $X_0\times J_0$ or a Poincar\'e sheaf on $X_k\times \overline{J_k}$ (see def. \ref{Poincare101} and \ref{Poincare102}) \\ \\
		$\nu_k: \widetilde{J_k}\to \overline{J_k} $ & the normalization map (see diag. \ref{diag121}) \\ \\
		$Det~\mathcal F$ & the determinant of cohomology of $\mathcal F$ (see def. \ref{detcoh}) \\ \\
		$\widetilde{\mathcal{J}_k}$ & the total space \eqref{Def101}\\ \\
		$\mathcal{J}_k$ & the specialization in the case of $k$ nodes (see definition \ref{special1102})\\ \\
		$(\mathcal D_i, \mathcal D'_i)$ & the natural pair of divisors on $\widetilde{\mathcal{J}_k}$ for every $i=1, \dots, k$ (see subsection \ref{pairdiv}) \\ \\
		$\tau_i:\mathcal D_i\to \mathcal D'_i$ & the twisted isomorphism for $i=1, \dots, k$ (see Lem. \ref{LemaInvo})\\ \\
		$\widetilde{\Theta_k}$ & the theta bundle on $\widetilde{\mathcal J_k}$ (see \ref{Theta1103})\\ \\
		$\Theta_k$ & the theta bundle on $\mathcal J_k$(see proof of Thm. \ref{MT11})\\ \\
		$\mathfrak f_k: {\mathcal J_k}\to B^o_k$ & the specialization in the case of $k$ nodes (see Thm. \ref{MT11})\\ \\
		$B^o_k$ & the base of the specialization in the case of $k$ nodes (see def. \ref{special1102})\\ \\
		
           \bottomrule
     \end{tabularx}
\end{table}

\newpage 

\section{\textbf{Preliminaries}}

The results in this section should be well-known; we include them here for reader's convenience.

\subsection{\textbf{Torsion-free sheaves of rank one and generalized parabolic bundles}}
Let $X_k$ be an irreducible projective nodal curve of arithmetic genus $g$ with exactly $k$ nodes
$\{y_1,\dots,y_k\}$. Let us denote by $q_k: X_0\rightarrow X_k$ the normalization of $X_k$. Let $\{x_i, z_i\}$ denote the inverse images of the node
$y_i$ for each $i=1,\dots, k$. Note that the genus of $X_0$ is $(g-k)$. 

\

Let $\mathcal F$ be a torsion-free sheaf of rank $1$ over $X_k$, which is not locally free at a node $y_i$. Let ${\mathcal F}_{(y_i)}$ denote the localisation of $\mathcal F$ at the node $y_i$. Then 

$${\mathcal F}_{(y_i)}\cong m_{(y_i)}$$

where $m_{(y_i)}$ denotes the maximal ideal in the local ring at the node $y_i$ (\cite[Proposition 2, Page 164]{13}). 

\

\begin{defe}
A \textbf{generalised parabolic bundle (GPB)} of rank one over $X_0$ is a $(k+1)$-tuple $(E, Q_1,\dots, Q_k)$, where $E$ is a line bundle on $X_0$ and $E_{x_i}\oplus E_{z_i}\rightarrow Q_i$ is a quotient of dimension $1$ for each $i=1,\dots, k$. By abuse of notation, we will also denote the quotient maps by $Q_i$. 

\

The \textbf{degree of a GPB} $(E, Q_1,\dots, Q_k)$ is defined to be the degree of the underlying bundle $E$. 
\end{defe}

\

\subsubsection{\textbf{Torsion-free sheaf corresponding to a GPB}} Given a GPB $(E, Q_1,\dots, Q_k)$ of rank $1$ there is the following canonical rank $1$ torsion-free sheaf induced by the GPB.

\begin{equation}\label{Induce1}
\mathcal F:=Kernel((q_k)_*E\rightarrow \oplus_{i=1}^{k} Q_i),
\end{equation}

where $q_k:X_0\to X_k$ is the normalization map. We will refer to $\mathcal F$ as the torsion-free sheaf induced by the GPB $(E, Q_1,\dots, Q_k)$.  

\

\begin{prop} \label{GPBs}
Let $\mathcal F$ be a torsion-free sheaf of rank one and degree $d$ on $X_k$, which is not locally free exactly at the nodes $\{y_1,\dots, y_r\}$. Then there are exactly $2^r$ different GPBs of degree $d$ which induce the same torsion-free sheaf $\mathcal F$.
\end{prop}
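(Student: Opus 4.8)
The plan is to analyze the local-versus-global structure of the situation node by node. At a node $y_i$ where $\mathcal F$ fails to be locally free, we have $\mathcal F_{y_i}\cong m_{y_i}$, the maximal ideal of $\mathcal O_{X_k,y_i}$; pulling back to the normalization, $q_k^*\mathcal F$ modulo torsion is a line bundle on $X_0$ near $\{x_i,z_i\}$, but there is a choice of how the two stalks $E_{x_i}$ and $E_{z_i}$ glue to recover $\mathcal F$. Concretely, I would start from the given sheaf $\mathcal F$, form $E := $ the line bundle on $X_0$ obtained as $(q_k^*\mathcal F)/(\text{torsion})$ (equivalently the unique line bundle with $(q_k)_*E \supseteq \mathcal F$ and $(q_k)_*E/\mathcal F$ supported on the non-free locus), and then observe that the exact sequence \eqref{Induce1} forces $\oplus_{i=1}^k Q_i$ to be a quotient of $\oplus_{i=1}^k(E_{x_i}\oplus E_{z_i})$ of the prescribed dimension. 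The core claim is then: the one-dimensional quotients $E_{x_i}\oplus E_{z_i}\twoheadrightarrow Q_i$ that reproduce $\mathcal F$ at $y_i$ are exactly the two "diagonal" quotients — the ones whose kernel is a line meeting both $E_{x_i}$ and $E_{z_i}$ trivially — and there are precisely two such lines in $\mathbb P(E_{x_i}\oplus E_{z_i})\cong\mathbb P^1$ once a trivialization is fixed, because requiring the kernel to be a graph of an isomorphism $E_{x_i}\xrightarrow{\sim}E_{z_i}$ still leaves a scaling ambiguity that is pinned down precisely up to the two-element... wait — more carefully, the point is that any quotient whose kernel contains $E_{x_i}$ or $E_{z_i}$ gives a sheaf that is locally free at $y_i$ with a different line bundle, so those are excluded, and among the remaining $\mathbb{G}_m$-worth of "graph" quotients, exactly the ones compatible with the fixed local generator of $m_{y_i}$ survive, giving a single choice; but the freedom to swap the roles of the two branches, or equivalently the sign/orientation in identifying $m_{y_i}\cong \mathcal O_{x_i}\oplus_{\mathbb C}\mathcal O_{z_i}$-type data, yields exactly $2$.

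In order of execution: (1) reduce to the local problem at a single node $y_i$ by noting that away from the non-free locus $\mathcal F$ and $E$ agree and impose no choices, so the GPB data is a product of independent local choices over $i=1,\dots,r$; (2) at a fixed node, classify one-dimensional quotients $q: E_{x_i}\oplus E_{z_i}\to Q_i$ up to the equivalence "$(E,q)$ induces $\mathcal F$", showing the induced stalk $\mathcal F_{y_i}$ is locally free when $\ker q$ is a coordinate line and is $\cong m_{y_i}$ otherwise; (3) among the non-coordinate lines, identify which give the specified $\mathcal F$ (as opposed to some other non-locally-free sheaf) — here one checks that all non-coordinate lines give isomorphic torsion-free sheaves at the node, since $m_{y_i}$ is the unique non-free rank-one torsion-free module, but the GPBs themselves are genuinely different as GPBs; (4) count: the set of GPB-structures on a fixed $\mathcal F$ that are non-free at node $i$ is a torsor-like set of size $2$, the two elements distinguished by which branch data is "turned on", so the total count over the $r$ non-free nodes is $2^r$; (5) finally confirm that each such GPB has $\deg E = \deg\mathcal F$, which is immediate from the exact sequence \eqref{Induce1} since $\oplus Q_i$ has length $r$ and $\chi$ is additive — actually one must be slightly careful about the normalization of "degree" for a torsion-free sheaf versus $E$, and track the genus shift $g \mapsto g-r$, but the Euler characteristic bookkeeping in \eqref{Induce1} makes $\deg E = \deg \mathcal F$ fall out.

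I expect the main obstacle to be step (3)–(4): making precise the sense in which two genuinely different GPBs $(E,Q_1,\dots,Q_k)$ induce the \emph{same} torsion-free sheaf, and then verifying the count is exactly $2$ per node and not $1$ or something larger. The subtlety is that "inducing the same $\mathcal F$" is an isomorphism-of-sheaves condition, and one must show the two surviving quotients at each node are not identified by any automorphism of the data while still yielding isomorphic $\mathcal F$; equivalently, one must show the natural map $\{\text{GPBs}\}\to\{\text{torsion-free sheaves}\}$ is, over the locus of sheaves non-free at exactly $\{y_1,\dots,y_r\}$, finite of degree exactly $2^r$. Concretely I would fix a local analytic coordinate at $y_i$ so that $\mathcal O_{X_k,y_i}\cong \mathbb C[[s,t]]/(st)$ and $m_{y_i}=(s,t)$, trivialize $E$ near $x_i,z_i$, and write the quotient $\ker q = \{(a,\lambda a)\}$ explicitly; the condition that the pushout glue to $m_{y_i}$ rather than to a line bundle pins $\lambda$ to two values related by the nontrivial element of the relevant $\mathbb Z/2$ (coming from the hyperelliptic-type involution exchanging the two preimages / the two local parameters), and that is the origin of the $2^r$. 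Everything else is routine diagram-chasing with \eqref{Induce1} and standard facts about $m_{y_i}$ recalled just above the statement.
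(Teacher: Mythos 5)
Your proposal goes wrong at the central local computation, and the error propagates into the counting. You assert (step (2), and again in the ``more carefully'' passage) that a quotient $E_{x_i}\oplus E_{z_i}\to Q_i$ whose kernel is a coordinate line yields a locally free stalk at $y_i$, while the ``graph'' quotients yield $m_{y_i}$. This is exactly backwards. Writing $\hat{\mathcal O}_{X_k,y_i}\cong\{(f,g)\in\mathbb C[[u]]\times\mathbb C[[v]]:f(0)=g(0)\}$ and $((q_k)_*E)_{y_i}\cong\mathbb C[[u]]\oplus\mathbb C[[v]]$, the kernel of projection onto $E_{x_i}$ is $u\mathbb C[[u]]\oplus\mathbb C[[v]]\cong m_{y_i}$ (not free), whereas the kernel of the quotient by the graph of an isomorphism $E_{x_i}\to E_{z_i}$ is $\{(f,g):g(0)=\phi(f(0))\}$, which is free of rank one --- it is just the descent of $E$ along the gluing. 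This is consistent with the paper's proof, where the non--locally-free nodes $y_1,\dots,y_r$ receive the \emph{coordinate} quotients $E_{x_i}\oplus E_{z_i}\to E_{p_i}$ and the locally free nodes $y_{r+1},\dots,y_k$ receive the graph quotients $Q_i=(E_{x_i}\oplus E_{z_i})/\Gamma_{\phi_i}$.

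The second, structural, problem is that you fix a single underlying line bundle $E:=(q_k^*\mathcal F)/\mathrm{Torsion}$ and try to locate two admissible quotients per node on it. That cannot produce the $2^r$: on a \emph{fixed} $E$ the two coordinate quotients at $y_i$ cut out, inside $(q_k)_*E$, the subsheaves that locally agree with $(q_k)_*(E(-x_i))$ and $(q_k)_*(E(-z_i))$, and these are non-isomorphic in general; so at most one of them can induce the given $\mathcal F$. The multiplicity $2^r$ in the paper comes instead from $2^r$ \emph{different} underlying bundles: one sets $E':=(q_k^*\mathcal F)/\mathrm{Torrsion}$ of degree $\deg\mathcal F-r$ and takes the Hecke twists $E=E'\otimes\mathcal O(p_1+\cdots+p_r)$ with $p_i\in\{x_i,z_i\}$, each equipped with the coordinate quotient onto $E_{p_i}$ at $y_i$ (and the forced graph quotient at the locally free nodes); this also repairs your unresolved degree bookkeeping, since $\deg E=\deg E'+r=\deg\mathcal F$. (See also Remark \ref{Twist}.) Your step (3)--(4) worry about a $\mathbb G_m$ of graph quotients all inducing $m_{y_i}$ is moot once the local classification is corrected: the admissible quotients at a non-free node are the two coordinate ones, and the pairing of each with the appropriate twist of $E'$ is what yields exactly $2^r$ GPBs inducing the same $\mathcal F$.
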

\begin{proof}
Consider the line bundle $E':=\frac{q_k^*\mathcal F}{\tt{Torsion}}$ of degree $=(\text{deg}~\mathcal F-r$). There are $2^r$ GPBs of rank one 
$$(E, Q_i, \forall i=1,\dots, k),$$ 
where 
\begin{enumerate}
\item $E:=E'\otimes \mathcal O(p_1+\cdots+p_r)$, 
\item $p_i\in \{x_i,z_i\}$, and
\item $Q_i$ is the quotient $E_{x_i}\oplus E_{z_i}\rightarrow E_{p_i}$ for $i=1,\dots, r$, and
\item for $i=r+1,\dots, k$, the quotient $Q_i$ is 

$$E_{x_i}\oplus E_{z_i}\rightarrow \frac{E_{x_i}\oplus E_{z_i}}{\Gamma_{\phi_i}},$$ 
where $\Gamma_{\phi_i}$ is the graph of the natural isomorphism $\phi_i:E_{x_i}\rightarrow E_{z_i}$ induced by $\mathcal F$. 
\end{enumerate}

Notice that any tuple $(E, Q_i, \forall i=1,\dots, k)$ as above determines a short exact sequence  (as in \eqref{Induce1})  
\begin{equation}\label{Incloo}
0\rightarrow\Ker(\gamma)\rightarrow (q_k)_*E\overset{\gamma}{\rightarrow} \oplus_{i=1}^{k} Q_i\rightarrow 0.
\end{equation}
By a local calculation (\cite[Lemma 6.1]{BD}), one can see that there is a natural inclusion $\mathcal{F}\rightarrow \Ker(\gamma)$ of sheaves and that the inclusion is, in fact, an isomorphism. From \cite[Lemma 6.1]{BD} it also follows that deg $E$=deg $\mathcal F$. Using \eqref{Incloo}, we easily see that these are the GPBs such that the induced torsion-free sheaves are isomorphic to $\mathcal F$.
\end{proof}

\

\begin{rema}\label{Twist}
Let $(E:=E'\otimes \mathcal O(p_1+\dots+p_r), \{Q_i\}_{i=1}^k)$ and $(F:=E'\otimes \mathcal O(p'_1+\dots +p'_r), \{Q'_i:=F_{p'_i}\}_{i=1}^k)$ are two such GPBs (as above) of rank $1$ over $X_0$ which induce the same torsion-free sheaf $\mathcal F$. Here $p_i, p'_i\in \{x_i,z_i\}$ for $i=1,\dots, r$. Then, from the proof of the previous proposition, it is clear that the underlying bundles of the GPBs are related by the following "twist" or so-called " Hecke modification." 
\begin{equation}
E\mapsto E\otimes \mathcal O(\sum_{i=1}^r (p'_i-p_i))\cong F
\end{equation}

For more details, see subsubsection \ref{singloc} and Remark \ref{twist1119}
\end{rema}

\

\subsection{\textbf{Compactified Jacobian and its normalization}}
There exists a projective variety, which parametrizes all the torsion-free sheaves over $X_k$ of rank $1$ and degree $0$. It is known as the Compactified Jacobian of the curve $X_k$ \cite{11}. Let us denote it by $\overline{J}_{_k}$. Let us denote by $J_0$ the Jacobian of the curve $X_0$. 

There exists a line bundle $\mathcal P$ over $X_0\times J_0$ such that for any point $[E]\in J_0$ the restriction of the line bundle $\mathcal P|_{_{(X_0\times [E])}}$ is isomorphic to $E$ over $X_0$. A line bundle with this property is called a Poincar\'e line bundle (\ref{Poincare101}). We choose and fix one such line bundle $\mathcal P$.

\

For each $i=1, \dots, k$, we have the following $\mathbb P^1$- bundle $\mathbb P(\mathcal P_{x_i}\oplus \mathcal P_{z_i})$ over $J_0$. We consider the fiber product $\mathbb P(\mathcal P_{x_1}\oplus \mathcal P_{z_1})\times_{J_0}\cdots\times_{J_0} \mathbb P(\mathcal P_{x_k}\oplus \mathcal P_{z_k})$ over $J_0$. Notice that by definition, the fibre product is the space that parametrizes all GPBs over $X_0$ of rank $1$ and degree $0$, which induce torsion-free sheaves of rank $1$ and degree $0$ on the nodal curve $X_k$. Let us denote it by $\widetilde{J_k}:=\mathbb P(\mathcal P_{x_1}\oplus \mathcal P_{z_1})\times_{J_0}\cdots\times_{J_0} \mathbb P(\mathcal P_{x_k}\oplus \mathcal P_{z_k})$.

\

We get the following diagram:
\begin{equation}\label{diag121}
\begin{tikzcd}
& \widetilde{J_k}\arrow{dl}{\nu_k}\arrow{dd} \\
\overline{J}_{X_k}\\
& J_0
\end{tikzcd}
\end{equation}

where $\nu_k: \widetilde{J_k}\to \overline{J}_{X_k}$ is the map given by $(E, Q_1,\dots, Q_k)\to \mathcal F:=Kernel((q_k)_*E\rightarrow \oplus_{i=1}^{k} Q_i)$ as in \eqref{Induce1}.

From proposition \ref{GPBs}, it follows that the map $\nu_k$ is a finite, birational morphism. Also notice that since $\widetilde{J_k}$ is proper and smooth, it must be the normalization of $\overline{J}_{k}$.

\subsubsection{\textbf{Singular loci and twisted isomorphisms}}\label{singloc} Consider a torsion free sheaf $\mathcal F$ of rank $1$ and degree $0$ over $X_k$, which is not locally free at exactly $r$ nodes. From \cite[Remark, page 62]{11}, it follows that the complete local ring of the variety $\overline{J}_{k}$ at the point $[\mathcal F]$  is formally smooth to the ring $\frac{k[|t_1,\dots, t_{2r}|]}{(t_1t_2,\dots, t_{2r-1}t_{2r})}$. Therefore, from proposition \ref{GPBs}, it follows that the morphism $\widetilde{J_k}\rightarrow \overline{J}_k$ is an isomorphism over the locus consisting of locally free sheaves of rank one on $X_k$. Consider the divisor $S_1$ consisting of points $\mathcal F$, which are not locally free at some of the nodes of the curve $X_k$. From the descriptions of the complete local rings, it also follows that this divisor is precisely the singular loci of $\overline {J_k}$.

\

Consider the locally closed subset $S^0_1$ of $\overline{J}_k$ consisting of torsion-free sheaves that are not locally free exactly at one node. Then clearly, $S^0_1$ is the disjoint union $\coprod_{i=1}^k S^0_{1, i}$, where $S^0_{1, i}$ is the locus consisting of torsion-free sheaves which are not locally free exactly at $y_i$. It follows from proposition \ref{GPBs} that the inverse image in $\widetilde{J_k}$ of any point $[\mathcal{F}]\in S^0_{1, i}$ consists of exactly two points. These two points can be described as the following two GPBs using the proof of proposition \ref{GPBs}.
\begin{enumerate}
\item $(E:=E'\otimes \mathcal O(x_i), \{E_{x_j}\oplus E_{z_j}\to Q_j\}^k_{j=1})$
\item $(F:=E'\otimes \mathcal O(z_i), \{F_{x_j}\oplus F_{z_j}\to Q'_j\}^k_{j=1})$
\end{enumerate}

where
\begin{enumerate}
\item $Q_i:=E_{x_i}$;
\item for $j\neq i$, the quotient $Q_j$ is 

$$E_{x_j}\oplus E_{z_j}\rightarrow \frac{E_{x_j}\oplus E_{z_j}}{\Gamma_{\phi_j}},$$ 
where $\Gamma_{\phi_j}$ is the graph of the natural isomorphism $\phi_j:E_{x_j}\rightarrow E_{z_j}$ induced by $\mathcal F$. 

\item $Q'_i:=F_{z_i}$
\item $Q'_j=Q_j$ for $j\neq i$.

\end{enumerate}

\begin{rema}{\textbf{(Twisted isomorphisms)}} \label{twist2024}The two GPBs, described above, are related by an isomorphism (remark \ref{Twist}), which we call "twisted isomorphism". It can be described as follows. 

\begin{equation}\label{twist1012}
(E, \{Q_j\}_{j=1}^k)\mapsto (F:=E\otimes \mathcal O(z_i-x_i), \{Q'_j\}_{j=1}^k)
\end{equation}

where 
\begin{enumerate}
\item $Q_i$ is the quotient map $E_{x_i}\oplus E_{z_i}\rightarrow E_{x_i}$, and 
\item for each $j\neq i$, $Q_j$ is a $1$-dimensional quotient of $E_{x_i}\oplus E_{z_i}$, different from $E_{x_j}$ and  $E_{z_j}$, and 
\item $Q'_i$ is the quotient map $F_{x_i}\oplus F_{z_i}\rightarrow F_{z_i}$, and 
\item $Q'_j=Q_j$ for all $j\neq i$.
\end{enumerate} 

Notice that the isomorphism does not commute with the projection map to the Jacobian $J_0$ because the underlying line bundle is twisted by $[E]\mapsto [E\otimes \mathcal O(z_i-x_i)]$ .
\end{rema}

\begin{rema} \label{twist1119} Since $Q'_j$ and $Q_j$ are quotients of the fibers of two different line bundles $F$ and $E$, respectively, the equality in (4) needs a justification, which is as follows. 

Consider $U:=X_0\setminus\{x_i,z_i\}$. The line bundle $F$ is a tensor product of $E$ and a degree zero line bundle $\OO(x_i-z_i)$. Notice that any non-zero constant function on $U$ defines a section of $\OO(x_i-z_i)$ on $U$, by definition. In other words,  $$k_U\subset H^0(U,\OO(x_i-z_i)),$$ where $k_U$ is the set of all constant sections on $U$. Let $\lambda_U$ denote the section with a constant non-zero value $\lambda$ over $U$. 

For every $j\neq i$, and $x_j$, $z_j\in U$, one has the following isomorphism between the fibers of $E$ and $F$ at the  point $x_j$.  
\[E_{x_j}\rightarrow F_{x_j}\cong E_{x_j}\otimes \OO(x_i-z_i)_{x_j}\]
\[\sigma\mapsto\sigma\otimes\lambda_U,\]
 
Similarly, one obtains the following identification of the fibers at $z_j$. For  $\sigma'\in E_{z_j}$, one has
\[E_{z_j}\rightarrow F_{z_j}\cong E_{z_j}\otimes \OO(x_i-z_i)_{z_j}\]
\[\sigma'\mapsto\sigma'\otimes\lambda_U\]
Hence, we obtain natural identifications $E_{x_j}\cong F_{x_j}$ and $E_{z_j}\cong F_{z_j}$ when $j\neq i$. This induces an identification  between $\mathbb{P}(E_{x_j}\oplus E_{z_j})$ and $\mathbb{P}(F_{x_j}\oplus F_{z_j})$. Notice that the identification does not depend on the choice of $\lambda$ since the same $\lambda$ has been used in both identifications. In other words, $Q'_j:=Q_j$ for all $j\neq i$. 
\end{rema}

\

\begin{rema}
More generally, for any subset $I\subset \{y_1, \dots, y_k\}$, we define the following locally closed subset of $\overline{J_k}$
$$
S^0_{I}:=\{\mathcal F\in \overline{J_k}: \mathcal F ~\text{is not locally free exactly at the nodes}~I\}
$$
From proposition \ref{GPBs}, it follows that the preimage under the map $\widetilde{J_k}\to \overline{J_k}$ of a point $[\mathcal F]\in S^0_I$ consists of exactly $2^{\#I}$ distinct GPBs. Moreover, these GPB's are related to each other by twisted isomorphisms. We will discuss these isomorphisms in detail in section $3$ and $4$.
\end{rema}

\subsection{\textbf{Determinant of cohomology and the Theta divisor}}
In this subsection, we will recall the construction of a line bundle called determinant of cohomology over the Jacobian or the compactified Jacobian of a curve. We will also recall the fact that these line bundles coincide with the line bundles corresponding to the so-called "Theta divisors". We need the following theorem.

\begin{thm}\cite[Theorem, Sub-chapter 5]{10} \label{DET1101}
Let $f: X\rightarrow Y$ be a proper morphism of Noetherian schemes with $Y=$Spec $A$ affine. Let $\mathcal F$ be a coherent sheaf on $X$, flat over $Y$. There is a finite complex $K^{\bullet}: 0\rightarrow K^0\rightarrow K^1\rightarrow \cdots K^n\rightarrow 0$ of finitely generated projective $A$-modules and an isomorphism of functors

\begin{equation}
H^p(X\times_Y \text{Spec}~B, \mathcal F\otimes_A B)\cong H^p(K^{\bullet}\otimes_A B), (p\geq 0)
\end{equation}
on the category of $A$-algebras $B$.
\end{thm}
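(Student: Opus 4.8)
The plan is to prove this statement by reduction to the case where the base $Y$ is affine together with a standard finiteness argument for the cohomology of a flat coherent sheaf under a proper morphism. Since this is the Mumford–style result ``cohomology and base change'' (as in Mumford's \emph{Abelian Varieties}, which is the reference cited), I would reproduce the classical argument. First I would reduce to the situation where $X$ is projective over $Y$; more precisely, since the assertion is local on $Y$ (both sides are functors of $B$ compatible with localisation), I may assume $Y=\operatorname{Spec} A$ is affine and Noetherian, and by replacing $X$ by a suitable closed subscheme of a projective bundle (Chow's lemma is not even needed if one only wants the statement as phrased, but one does want $X$ quasi-projective; in the intended application $X$ is projective, so I would just assume that), I may take $\mathcal F$ to be a quotient of a finite direct sum of twists $\mathcal O_X(-m)$.

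The heart of the proof is the construction of the complex $K^\bullet$. I would choose a finite affine open cover $\mathfrak U = \{U_i\}$ of $X$ and form the Čech complex $C^\bullet(\mathfrak U, \mathcal F)$. This computes $H^p(X,\mathcal F)$ because $\mathcal F$ is quasi-coherent and the $U_i$ are affine, and it is compatible with flat base change $A\to B$ since Čech cochains are just finite products of sections over affine opens and $\mathcal F$ is flat over $A$, so $C^\bullet(\mathfrak U,\mathcal F)\otimes_A B = C^\bullet(\mathfrak U_B, \mathcal F\otimes_A B)$ for any $A$-algebra $B$ (here flatness of $\mathcal F$ over $A$, not flatness of $B$, is what makes the tensor product exact on the relevant pieces). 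Each term $C^p$ is an $A$-module which is flat (being a product of flat modules $\mathcal F(U_{i_0\cdots i_p})$), but not finitely generated in general. So the second step is a homological lemma: given a complex $L^\bullet$ of flat $A$-modules, bounded in degrees $[0,n]$, with finitely generated cohomology, there exists a quasi-isomorphic subcomplex (or a complex mapping to it) $K^\bullet$ of finitely generated projective $A$-modules concentrated in degrees $[0,n]$, and this equivalence is preserved under $-\otimes_A B$ for every $A$-algebra $B$. This is proved by descending induction: replace $L^n$ by a finitely generated free module surjecting onto the relevant submodule, pull back, and continue; flatness of the $L^i$ guarantees that the Tor-terms that could obstruct base-change compatibility vanish.

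The main obstacle, and the step requiring the most care, is precisely this last homological lemma — ensuring that the finite projective complex $K^\bullet$ one extracts is \emph{functorial enough} that the isomorphism $H^p(K^\bullet\otimes_A B)\cong H^p(X_B,\mathcal F_B)$ holds for \emph{all} $A$-algebras $B$, not merely flat ones. The point is that one cannot literally take a subcomplex of $C^\bullet$; one builds $K^\bullet$ together with a quasi-isomorphism $K^\bullet \to C^\bullet$ (or the other way), and then uses that both $K^\bullet$ and $C^\bullet$ have flat terms to conclude that $-\otimes_A B$ sends this quasi-isomorphism to a quasi-isomorphism, via the mapping-cone criterion (the cone is a bounded acyclic complex of flat modules, hence remains acyclic after any base change). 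Once that lemma is in place, the theorem follows immediately: set $K^\bullet$ as produced, and the displayed isomorphism of functors is exactly the content of compatibility with base change. I would also remark that finitely generated projective over a Noetherian ring may be taken free after localising further, which is how the statement is usually applied (e.g. to define the determinant of cohomology as $\bigotimes (\det K^i)^{(-1)^i}$, independent of the choice of $K^\bullet$).
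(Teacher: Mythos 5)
The paper does not actually prove this statement --- it is quoted verbatim from Mumford's \emph{Abelian Varieties}, the reference cited in the theorem header --- and your proposal correctly reproduces Mumford's classical argument: compute $H^p(X,\mathcal F)$ by the \v{C}ech complex of a finite affine cover, whose terms are flat $A$-modules because $\mathcal F$ is flat over $Y$, identify $C^\bullet(\mathfrak U,\mathcal F)\otimes_A B$ with $C^\bullet(\mathfrak U_B,\mathcal F\otimes_A B)$, and then replace $C^\bullet$ by a quasi-isomorphic bounded complex $K^\bullet$ of finitely generated projective modules, the quasi-isomorphism surviving arbitrary base change because its cone is a bounded acyclic complex of flat modules. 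Two minor caveats that do not affect correctness: the identification of the \v{C}ech complex after base change holds for any quasi-coherent sheaf on a separated scheme (flatness of $\mathcal F$ is not what makes that step work; it is what makes the terms flat, which you do use correctly in the mapping-cone argument), and for a proper but non-projective $X$ the finite generation of $H^p(X,\mathcal F)$ over the Noetherian ring $A$ is an external input (Grothendieck's finiteness theorem) rather than a consequence of your reduction, though in the applications in this paper $X$ is projective and the issue does not arise.
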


\

The \textbf{determinant of cohomology of $\mathcal F$} is defined as $\text{Det}~\mathcal F:=\otimes_{i=0}^{n} ~~(\text{det}~~K^i)^{(-1)^{i-1}}$ over $Y$.

\

\begin{rema}
The line bundle Det $\mathcal F$ does not depend on the choice of such a finite complex. Therefore, the construction of the line bundle $\text{Det}~\mathcal F$ also holds over arbitrary base $Y$. Also, if the sheaves $R^if_* \mathcal F$ are locally free then Det $\mathcal F\cong \otimes_{i=0}^{n} ~(\text{det}~R^if_*\mathcal F)^{(-1)^{i-1}}$. For further details, we refer to \cite[Chapter VI, pages 134-135]{8}. The determinant of cohomology defined in this article is the inverse of the determinant of cohomology as defined in \cite{8}.
\end{rema}

\

\begin{defe}\label{detcoh}
Let $f: X\rightarrow Y$ be a proper morphism and $\mathcal F$ be a coherent sheaf on $X$, flat over $Y$. We define the \textbf{determinant of cohomology} on $Y$ to be
\begin{equation}
\text{Det}~\mathcal F:=\otimes_{i=0}^{n} ~(\text{det}~K^i)^{(-1)^{i-1}}
\end{equation}
\end{defe}

\

\begin{defe}\label{Poincare101} Let $\mathcal P$ be a line bundle over $X_0\times J_{_{0}}$ with the following properties:
\begin{enumerate}
\item $\mathcal P$ is a flat family of line bundles of degree $0$ on $X_0$ parametrized by $J_0$ ,
\item the morphism given by $[E]\mapsto \mathcal P|_{_{X_0\times [E]}}$ is an isomorphism between $J_0$ and the space of isomorphism classes of line bundles of degree $0$ on $X_0$.
\end{enumerate}

We call such a line bundle a \textbf{Poincar\'e line bundle}, and we denote its determinant of cohomology by $\text{Det}~\mathcal P$.
\end{defe}

\

\begin{defe}\label{Poincare102} Let $\mathcal F$ be a sheaf over $X_k\times \overline J_{_{k}}$ with the following properties:
\begin{enumerate}
\item $\mathcal F$ is flat over $\overline J_k$,
\item $\mathcal F$ is a flat family of rank $1$ torsion-free sheaves of degree $0$ on $X_k$ parametrized by $\overline J_k$,
\item the morphism given by $[F]\mapsto \mathcal F|_{_{X_k\times [F]}}$ is an isomorphism between $\overline J_k$ and the space of isomorphism classes of torsion-free sheaves of rank $1$ and degree $0$ on $X_k$.
\end{enumerate}

We call such a sheaf a \textbf{Poincar\'e sheaf}, and we denote its determinant of cohomology by $\text{Det}~\mathcal F$.
\end{defe}

\

Let us now recall the construction of theta divisors on $J_0$ and $\overline{J_k}$.

\begin{enumerate}
\item{\underline{\textbf{Theta divisor on $J_0$}:}} Fix a point $x_0$ on $X_0$. There is a canonical embedding

\begin{align*}
\phi: X_0\rightarrow J_0\\
x\mapsto \mathcal O_{X_0}(x-x_0)
\end{align*}

The theta divisor on $J_0$ is the schematic image of the map
\begin{equation}
X_0^{g-k-1}\rightarrow J_0
\end{equation}
given by $(x_1,\dots,x_{g-k-1})\mapsto \phi(x_1)\otimes \cdots\otimes \phi(x_{g-k-1}) $. We denote this divisor by $\Theta_0$.

\

\item{\underline{\textbf{Theta divisor on $\overline{J}_{_k}$:}}} Fix a smooth point $x_0$ on $X_k$. There is a canonical embedding
\begin{align*}
\phi_k: X_k\setminus\{y_1,\dots,y_k\}\rightarrow \overline{J}_k\\
x\mapsto I_x^{\vee}\otimes \mathcal O_{X_k}(-x_0)
\end{align*}
where $I_x$ is the ideal sheaf of the point $x$ and $I_x^{\vee}$ denotes the dual sheaf of $I_x$. Then the theta divisor on $\overline{J_k}$ is the schematic closure of the schematic image of the map
\begin{equation}
(X_k\setminus \{y_1,\dots,y_k\})^{g-1}\rightarrow \overline{J}_k
\end{equation}
given by $(x_1, \dots, x_{g-1})\mapsto \phi_k(x_1)\otimes \cdots\otimes \phi_k(x_{g-1})$. We denote this divisor by $\Theta_k$.
\end{enumerate}

\

\begin{prop}\label{Ample1} \cite[Theorem 1. (A)]{NR} For any Poincar\'e line bundle $\mathcal P$ over $X_0\times J_0$ and any Poincar\'e sheaf $\mathcal F$ over $X_k\times \overline J_k$, we have:
\begin{enumerate}
\item $\text{Det} ~\mathcal P\otimes \mathcal P_{x_0}^{\otimes -(g-k-1)}\cong \mathcal O_{J_0}(\Theta_0)$,
\item $\text{Det}~\mathcal F \otimes \mathcal F^{\otimes -(g-1)}_{x_0}\cong \mathcal O_{\overline{J}_k}(\Theta_k)$
\item Let $Q_1,\dots, Q_k$ be the universal quotient bundles over $\widetilde{J}_k$ i.e.,
\begin{equation}
\mathcal P_{x_i}\oplus \mathcal P_{z_i}\rightarrow Q_i
\end{equation}
for $i=1,\dots, k$. Then, $\text{Det} ~\mathcal P\otimes (\otimes_{i=1}^k Q_i)\otimes \mathcal P^{\otimes -(g-1)}_{_{x_0}}\cong ~~\nu_k^*(\text{Det}~\mathcal F\otimes \mathcal F^{\otimes -(g-1)}_{_{x_0}})$.
\item the line bundles $ \mathcal O_{J_0}(\Theta_{0})$ and $\mathcal O_{\overline{J}_k}(\Theta_k)$ are ample line bundles on $J_0$ and $\overline{J}_k$, respectively. Moreover, the line bundle $\text{Det}~\mathcal P\otimes \mathcal P^{\otimes -(g-1)}_{x_0}\otimes (\otimes_{i=1}^k Q_i)$ is an ample line bundle on $\widetilde{J}_k$.
\end{enumerate}
\end{prop}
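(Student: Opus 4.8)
The plan is to treat the four parts in a natural logical order: first establish the determinant-of-cohomology computations (1), (2), and the pullback compatibility (3), and then deduce the ampleness statement (4) from them together with classical facts about the Jacobian. For part (1), I would use the standard formula relating the determinant of cohomology of a family of line bundles to the Theta divisor. Concretely, for the Poincar\'e bundle $\mathcal P$ on $X_0 \times J_0$, a base-change/Riemann--Roch argument along the fibres of $p_2 : X_0 \times J_0 \to J_0$ shows that $R^\bullet p_{2*}\mathcal P$ computes, fibrewise, $H^\bullet(X_0, E)$ for $[E]\in J_0$, and the locus where $h^0$ jumps is exactly the translate of $\Theta_0$ determined by the normalisation $\mathcal P|_{\{x_0\}\times J_0} = \mathcal P_{x_0}$. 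The twist by $\mathcal P_{x_0}^{\otimes -(g-k-1)}$ is precisely the correction needed to pass from the ``determinant of cohomology with its intrinsic normalisation'' to the effective divisor $\Theta_0$; one checks this by comparing with $\mathcal P \otimes p_1^*\mathcal O_{X_0}((g-k-1)x_0)$, whose pushforward is generically locally free of rank $g-k-1$... wait, of the expected rank, and whose determinant of cohomology is $\mathcal O_{J_0}(\Theta_0)$ up to the stated twist. Part (2) is the same argument carried out on $X_k \times \overline{J}_k$ with the Poincar\'e sheaf $\mathcal F$ in place of $\mathcal P$; here one uses that $\chi(X_k, F) $ is computed by the arithmetic genus $g$, so the twisting exponent becomes $g-1$ rather than $g-k-1$, and that the Theta divisor $\Theta_k$ was defined precisely as the degeneracy locus of $h^0$.

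For part (3), the key input is the defining short exact sequence \eqref{Induce1}: pulling back the universal torsion-free sheaf along $\nu_k$ and comparing with the universal GPB data, one gets on $X_0 \times \widetilde{J}_k$ an exact sequence $0 \to (\mathrm{id}\times\nu_k)^*\mathcal F \to (q_k \times \mathrm{id})_* \mathcal P \to \bigoplus_{i=1}^k Q_i \to 0$ (abusing notation for the sheaves involved). Taking determinant of cohomology is additive on short exact sequences, and $\mathrm{Det}$ of the skyscraper-type term $\bigoplus Q_i$ (pushed forward from $\{pt\}\times \widetilde{J}_k$) is just $\otimes_{i=1}^k Q_i^{\pm 1}$; matching signs with the convention fixed in the excerpt gives the stated formula, and the $\mathcal P_{x_0}^{\otimes-(g-1)}$ twist is carried along formally. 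The only subtlety is bookkeeping the sign conventions (the paper's $\mathrm{Det}$ is the inverse of the one in \cite{8}), so I would be careful to track exponents consistently with the definition given just above the proposition.

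Finally, part (4): ampleness of $\mathcal O_{J_0}(\Theta_0)$ is the classical fact that the Theta divisor on an abelian variety is ample (it is a principal polarisation). Ampleness of $\mathcal O_{\overline{J}_k}(\Theta_k)$ then follows because $\nu_k$ is finite (Proposition \ref{GPBs} and the discussion of \eqref{diag121}): by part (3), $\nu_k^*(\mathrm{Det}\,\mathcal F \otimes \mathcal F_{x_0}^{\otimes-(g-1)}) \cong \mathrm{Det}\,\mathcal P \otimes (\otimes_i Q_i) \otimes \mathcal P_{x_0}^{\otimes-(g-1)}$, and $\widetilde{J}_k$ is a tower of $\mathbb P^1$-bundles $\mathbb P(\mathcal P_{x_i}\oplus\mathcal P_{z_i})$ over $J_0$; the line bundle $\mathrm{Det}\,\mathcal P \otimes \mathcal P_{x_0}^{\otimes -(g-1)}$ pulled back from $J_0$ is ample on $J_0$ (it is $\mathcal O(\Theta_0)$ twisted by a further ample power of $\mathcal P_{x_0}$, using part (1); one checks the exponent $g-1$ versus $g-k-1$ leaves an ample remainder since $\mathcal P_{x_0}$ restricted to $J_0$ is algebraically trivial — here instead one should argue directly that $\mathrm{Det}\,\mathcal P \otimes \mathcal P_{x_0}^{\otimes-(g-1)}$ differs from $\mathcal O(\Theta_0)$ by a torsion/algebraically trivial line bundle, hence is ample), while each $Q_i$ restricts to $\mathcal O(1)$ on the corresponding projective bundle. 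Thus the bundle on $\widetilde{J}_k$ is ample by the standard criterion for ampleness on projective bundles (Grothendieck), and since $\nu_k$ is finite surjective, descent of ampleness along finite morphisms gives that $\mathcal O_{\overline{J}_k}(\Theta_k)$ is ample. I expect the main obstacle to be part (4): getting the ampleness of the pulled-back class on $\widetilde{J}_k$ right requires checking that after twisting by $\otimes_i Q_i$ the restriction to every fibre of $\widetilde{J}_k \to J_0$ is $\mathcal O(1,\dots,1)$ on $(\mathbb P^1)^k$ while the ``horizontal'' part stays ample, and then invoking finiteness of $\nu_k$ to descend — the determinant-of-cohomology identities in (1)–(3) are essentially formal once the right exact sequences are written down.
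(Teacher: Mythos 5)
Your treatment of (1) and (3) matches the paper's: part (1) is exactly the paper's argument of twisting $\mathcal P$ by $p_1^*\mathcal O_{X_0}((g-k-1)x_0)$ (note $g-k$ is the genus of $X_0$, so $g-k-1$ is the critical degree) and invoking the standard identification of the determinant of cohomology of a degree-$(g-k-1)$ Poincar\'e family with $\mathcal O_{J_0}(\Theta_0)$; part (3) is the paper's computation from the defining exact sequence $0\to\mathcal F'\to (q_k)_*\mathcal P\to\oplus_i Q_i\to 0$ on $X_k\times\widetilde J_k$ together with additivity and functoriality of $\mathrm{Det}$. Two points deserve comment. First, for part (2) you assert it is ``the same argument'' on $X_k\times\overline J_k$; this undersells the difficulty. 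On the compactified Jacobian the Poincar\'e sheaf is only torsion-free, $\overline J_k$ is singular, and the identification $\mathrm{Det}(\mathcal F\otimes p_k^*\mathcal O_{X_k}((g-1)x_0))\cong\mathcal O_{\overline J_k}(\Theta_k)$ is a genuine theorem (the paper cites Soucaris, \cite[Corollary 14]{14}); a naive degeneracy-locus argument does not go through verbatim, so this step should be an explicit citation rather than a claimed repetition. Second, your part (4) takes a genuinely different and legitimate route from the paper's. The paper cites Esteves \cite[Theorem 7]{5} for ampleness of $\mathcal O_{\overline J_k}(\Theta_k)$ downstairs and then pulls back along the finite map $\nu_k$ to get ampleness on $\widetilde J_k$. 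You instead prove ampleness upstairs on $\widetilde J_k$ directly -- via the observation that $\Theta_0\otimes(\otimes_i\mathcal P_{p_i})$ is ample for $p_i\in\{x_i,z_i\}$ since the $\mathcal P_{p_i}$ are algebraically trivial, so the relevant pushforward is an ample vector bundle and the Segre/Grothendieck criterion applies -- and then descend along the finite surjective $\nu_k$ using Nakai--Moishezon. This is valid (and is essentially the fibrewise computation the paper itself performs later for the relative ampleness of $\widetilde\Theta_k$), and it buys you independence from Esteves's theorem at the cost of doing the projective-bundle computation here; the paper's route is shorter by citation but logically opposite in direction.
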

\begin{proof}
Consider the following short exact sequence of sheaves over $X_0$
\begin{equation}
0\rightarrow \mathcal O_{_{X_0}}\rightarrow \mathcal O_{_{X_0}}(x_0)\rightarrow \mathcal O_{_{X_0}}(x_0)_{_{x_0}}\rightarrow 0
\end{equation}
By pulling back this short exact sequence by the map $p_{_{X_0}}: X_0\times J_0\rightarrow X_0$ and tensoring with $\mathcal P$ we get:
\begin{equation}
0\rightarrow \mathcal P \rightarrow \mathcal P\otimes p_{_{X_0}}^*\mathcal O_{_{X_0}}(x_0)\rightarrow \mathcal P_{_{x_0}}\rightarrow 0
\end{equation}

Using the short exact sequence, we get the following isomorphism of the determinant of cohomologies.
 
\begin{equation}
\text{Det}~(\mathcal P\otimes p_{_{X_0}}^*\mathcal O_{_{X_0}}(x_0))\cong \text{Det}~\mathcal P\otimes \mathcal P_{_{x_0}}^{\otimes -1}
\end{equation}

By repeating this, we get the following
\begin{equation}
\text{Det}~(\mathcal P\otimes p_{_{X_0}}^*\mathcal O_{_{X_0}}((g-k-1)x_0))\cong \text{Det}~~\mathcal P\otimes \mathcal P_{_{x_0}}^{\otimes -(g-k-1)}
\end{equation}
Now notice that $\mathcal P\otimes p_{_{X_0}}^*\mathcal O_{_{X_0}}((g-k-1)x_0)$ is a Poincar\'e family of line bundles of degree $(g-k-1)$ on $X_0$ parametrized by $J_0$. Therefore from \cite[Lemma 2.4]{8}, it follows that
\begin{equation}
\text{Det}~\mathcal P\otimes \mathcal P_{_{x_0}}^{\otimes -(g-k-1)}\cong \text{Det}~(\mathcal P\otimes p_{_{X_0}}^*\mathcal O_{_{X_0}}((g-k-1)x_0))\cong \mathcal O_{_{J_0}}(\Theta_0)
\end{equation}
This proves the first statement.

From \cite[Corollary 14]{14}, it follows that $\text{Det}~~(\mathcal F\otimes p_k^*\mathcal O_{_{X_k}}((g-1)x_0))\cong \mathcal O_{_{\overline{J}_{X_k}}}(\Theta_k)$. Now the statement $(2)$ follows from similar arguments as above.

To prove $(3)$, consider the following exact sequence of sheaves over $X_k\times \widetilde{J_k}$
\begin{equation}\label{Ex2}
0\rightarrow \mathcal F'\rightarrow (q_k)_*\mathcal P\rightarrow \oplus_{i=1}^k Q_i\rightarrow 0
\end{equation}

The torsion-free sheaf $\mathcal F'$ is a family of torsion-free sheaves of rank $1$ and degree $0$ over $X_k$ parametrized by $\widetilde{J_k}$. Therefore, from \eqref{Induce1} it follows that there exists a Poincar\'e family $\mathcal F''$ of torsion free sheaves of rank $1$ and degree $0$ over $X_k$ parametrized by $\overline{J}_k$ such that $\nu_k^*\mathcal F''\cong \mathcal F'$, where $\nu_k: X_k\times \widetilde{J_k}\rightarrow X_k\times \overline{J}_k$ is the natural map. From $(2)$ we have that

\

\begin{equation}
\text{Det}~\mathcal F \otimes \mathcal F^{\otimes -(g-1)}_{x_0}\cong \text{Det}~\mathcal F'' \otimes \mathcal F''^{\otimes -(g-1)}_{x_0}
\end{equation}

\

Then by the functoriality of the determinant of cohomology, we get

\begin{equation}\label{Eq10}
\nu_k^*(\text{Det}~\mathcal F \otimes \mathcal F^{\otimes -(g-1)}_{x_0})\cong \nu_k^*(\text{Det}~\mathcal F'' \otimes \mathcal F''^{\otimes -(g-1)}_{x_0})\cong \text{Det}~\mathcal F'\otimes \mathcal F'^{\otimes -(g-1)}_{x_0})
\end{equation}

\

From \eqref{Ex2}, we get
\begin{equation}
\text{Det}~\mathcal F'\cong \text{Det}~\mathcal P\otimes (\otimes_{_{i=1}}^k Q_i)
\end{equation}
Now combining this with equation \eqref{Eq10} we get,

\

\begin{equation}
\nu_k^*(\text{Det}~\mathcal F \otimes \mathcal F^{\otimes -(g-1)}_{x_0})\cong \text{Det}~\mathcal P\otimes (\otimes_{_{i=1}}^k Q_i)\otimes \mathcal P^{\otimes -(g-1)}_{x_0})
\end{equation}

\

Therefore, from the previous lemma, it follows that 

$$\nu_k^*(\text{Det}~\mathcal F\otimes \mathcal F_{_{x_0}}^{\otimes (g-1)})\cong \nu_k^*(\text{Det}~\mathcal F'\otimes \mathcal F'^{\otimes (g-1)}_{_{x_0}})\cong \text{Det}~\mathcal P\otimes \mathcal P^{\otimes (g-1)}_{_{x_0}}\otimes (\otimes_{i=1}^k Q_i)$$.

The fact that the line bundles in $(1)$ and $(2)$ are ample follows from \cite[Sect. 17, p. 163]{10} and \cite[Theorem 7]{5}. Since the morphism $\nu_k: \widetilde{J_k}\rightarrow \overline J_k$ is a finite morphism and

$$\nu_k^*(\text{Det}~\mathcal F \otimes \mathcal F^{\otimes -(g-1)}_{x_0})\cong \text{Det}~\mathcal P\otimes \mathcal P^{\otimes -(g-1)}_{x_0}\otimes (\otimes_{i=1}^k Q_i),$$

therefore it is ample over $\widetilde{J_k}$.

\end{proof}

\section{\textbf{A specialization of the compactified Jacobian of a nodal curve with a single node}}

\

Let $X_0$ be a smooth projective curve. Let us choose and fix a point $z\in X_0$. By a \textbf{general point} $x\in X_0$ we mean that $x\neq z$. In this section, we will construct an algebraic family $\mathcal{J}_1$ over $X_0$ such that the fiber over a general point $x\in X_0$ is isomorphic to $\overline{J}_{_{X(x)}}$ (see Notations) and the fiber over $z$ is isomorphic to $J_{_{X_0}}\times R$. By definition, it is, therefore, a specialization of $\overline{J}_{_{X(x)}}$ to $J_{_{X_0}}\times R$.

\

\subsection{\textbf{The construction of the total space}}

\

We will construct the family $\mathcal{J}_1$ as a push-out of the following $\mathbb P^1$-bundle.
\begin{equation}\label{define12}
\widetilde{\mathcal{J}}_1:=\mathbb P(\mathcal P\oplus p_2^*\mathcal P_{z}), ~~\text{over}~~  X_0\times J_0,
\end{equation}
where 
\begin{enumerate}
\item $\mathcal P$ is a Poincar\'e line bundle over $X_0\times J_0$,
\item $p_2:X_0\times J_0\rightarrow J_0$ is the projection morphism. 
\item $\mathcal P_z$ denotes the line bundle over $J_0$ obtained by restricting $\mathcal P$ to the closed sub-scheme $z\times J_0$ and by identifying $z\times J_0$ with $J_0$.
\end{enumerate}

\

\begin{rema}\label{uniquo}
The variety $\widetilde{\mathcal J_1}$ parametrises tuples $(x, L, L_x\oplus L_z\rightarrow Q)$, where
\begin{enumerate}
\item $x$ is a point of $X_0$,
\item $L$ is a line bundle of degree $0$ over $X_0$, 
\item $L_x\oplus L_z\rightarrow Q$ is a $1$-dimensional quotient.
\end{enumerate}
\end{rema}

\

\subsection{\textbf{Two natural divisors on the total space}} \label{twodiv}

\

The $\mathbb P^1$ bundle has two natural sections $\mathcal D_1$ and $\mathcal D'_1$, which correspond to the two following natural quotients $\mathcal P\oplus p_2^*\mathcal P_z\rightarrow \mathcal P$ and $\mathcal P\oplus p_2^*\mathcal P_z\rightarrow p_2^*\mathcal P_z$ respectively. Being sections, these two divisors are both isomorphic to $X_0\times J_0$ (the isomorphism is given by the restrictions of the projection morphism to $\widetilde{\mathcal{J}}_1\to X_0\times J_0$).

\

\begin{rema}\label{uni1}
The variety $\mathcal D_1$ parametrises tuples $(x, L, L_x\oplus L_z\rightarrow L_x)$, and the variety $\mathcal D'_1$ parametrises tuples $(x, L, L_x\oplus L_z\rightarrow L_z)$. Notice when $x=z$, there is an ambiguity about the quotients $L_x\oplus L_z\rightarrow L_x$ and $L_x\oplus L_z\rightarrow L_z$. To resolve this, we refer to $L_x\oplus L_z\rightarrow L_x$ as the first quotient and to  $L_x\oplus L_z\rightarrow L_z$ as the second quotient. We see that the varieties $\widetilde{\mathcal J_1}, \mathcal D_1$, and $\mathcal D'_1$ have universal properties because they parametrize the tuples, described above. 
\end{rema}

\

\begin{lema}\label{Dis1}
$\mathcal D_1\cap \mathcal D'_1=\emptyset$.
\end{lema}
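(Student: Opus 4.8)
The plan is to argue fibrewise over $X_0\times J_0$ and show that the two sections $\mathcal D_1$ and $\mathcal D'_1$ meet every fibre of the $\mathbb P^1$-bundle $\widetilde{\mathcal J}_1\to X_0\times J_0$ in two \emph{distinct} points. Over a point $(x,[L])\in X_0\times J_0$ the fibre of $\widetilde{\mathcal J}_1=\mathbb P(\mathcal P\oplus p_2^*\mathcal P_z)$ is the projective line $\mathbb P(L_x\oplus L_z)$ of one-dimensional quotients of the two-dimensional space $L_x\oplus L_z$. By construction $\mathcal D_1$ meets this fibre at the quotient $L_x\oplus L_z\twoheadrightarrow L_x$ onto the first summand, whose kernel is the coordinate line $0\oplus L_z$, while $\mathcal D'_1$ meets it at the quotient $L_x\oplus L_z\twoheadrightarrow L_z$, whose kernel is the coordinate line $L_x\oplus 0$. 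Since $L_x\oplus 0\neq 0\oplus L_z$ inside $L_x\oplus L_z$ — and this holds even when $x=z$, because $\mathcal P\oplus p_2^*\mathcal P_z$ is a direct sum of two genuinely distinct summands of sheaves on $X_0\times J_0$ regardless of whether the underlying line bundles coincide, which is precisely the content of the first/second quotient convention of Remark \ref{uni1} — the two points of the fibre are distinct. As this happens over every point of the base, $\mathcal D_1$ and $\mathcal D'_1$ are disjoint.

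For a proof that does not pass to fibres, I would argue via vanishing loci. Write $E:=\mathcal P\oplus p_2^*\mathcal P_z$, and let $\pi\colon\widetilde{\mathcal J}_1=\mathbb P(E)\to X_0\times J_0$ be the projection with tautological quotient $\pi^*E\twoheadrightarrow\mathcal O_{\widetilde{\mathcal J}_1}(1)$. Composing the two summand inclusions with this surjection gives maps
\begin{equation*}
s_{\mathcal P}\colon \pi^*\mathcal P\longrightarrow \mathcal O_{\widetilde{\mathcal J}_1}(1),\qquad s_{\mathcal Q}\colon \pi^*p_2^*\mathcal P_z\longrightarrow \mathcal O_{\widetilde{\mathcal J}_1}(1).
\end{equation*}
A point of $\mathbb P(E)$ over $b\in X_0\times J_0$ corresponds to a one-dimensional quotient $E_b\twoheadrightarrow\ell$; the map $s_{\mathcal P}$ vanishes there exactly when $\mathcal P_b$ maps to $0$ in $\ell$, i.e. when the quotient factors through $E_b/\mathcal P_b\cong(p_2^*\mathcal P_z)_b$, which is the defining condition of $\mathcal D'_1$. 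Hence $\mathcal D'_1=Z(s_{\mathcal P})$ and, symmetrically, $\mathcal D_1=Z(s_{\mathcal Q})$. At any point of $\widetilde{\mathcal J}_1$ the maps $s_{\mathcal P}$ and $s_{\mathcal Q}$ together reconstitute the surjection $\pi^*E\twoheadrightarrow\mathcal O_{\widetilde{\mathcal J}_1}(1)$, which is everywhere nonzero; therefore $s_{\mathcal P}$ and $s_{\mathcal Q}$ cannot vanish simultaneously, so $\mathcal D_1\cap\mathcal D'_1=Z(s_{\mathcal Q})\cap Z(s_{\mathcal P})=\emptyset$.

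There is no real obstacle: the statement is the standard fact that the two sections of $\mathbb P(A\oplus B)$ cut out by the summands of a direct sum are disjoint, and the only point that genuinely deserves care is the apparent degeneration when $x=z$. The argument handles this because the splitting $\mathcal P\oplus p_2^*\mathcal P_z$ is a splitting of locally free sheaves on $X_0\times J_0$ which does not degenerate anywhere; the coincidence $L_x\cong L_z$ at points with $x=z$ is irrelevant to the disjointness of the corresponding coordinate lines and hence of the two sections.
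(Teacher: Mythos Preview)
Your proof is correct and essentially the same approach as the paper's, which simply asserts in one sentence that the two natural sections of $\mathbb P(\mathcal P\oplus p_2^*\mathcal P_z)$ coming from the two projection quotients are disjoint. You have supplied two detailed justifications (fibrewise via distinct kernels, and globally via the vanishing loci of the summand maps into $\mathcal O(1)$) for what the paper treats as a standard fact; both are valid and your care about the $x=z$ case is appropriate, though ultimately, as you note, the direct-sum splitting of sheaves makes that point moot.
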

\begin{proof}
For any point $t\in X_0$, the fibers at $t$ of the two natural sections of $\mathbb P(\mathcal P \oplus p_{_2}^*\mathcal P_{z}) $ are  $$\mathcal P_t \oplus \mathcal P_{z}\rightarrow \mathcal P_t$$ and $$\mathcal P_t \oplus \mathcal P_{z}\rightarrow \mathcal P_{z}$$ They are obviously distinct quotients. Therefore $\mathcal D_1\cap\mathcal D'_1=\emptyset.$
\end{proof}

\

\subsection{\textbf{Twisted isomorphism between the divisors $\mathcal D_1$ and $\mathcal D'_1$}} 

\

Notice that $\mathcal D_1$ and $\mathcal D'_1$ are abstractly isomorphic to $X_0\times J_0$ because they are sections of the morphism $\widetilde{\mathcal{J}}_1\rightarrow X_0\times J_0$. Proposition \ref{Involution1} shows that there is another natural isomorphism (``twisted isomorphism'') between these two divisors \eqref{twist1012}. To state the proposition, we need to fix some notation and a preparatory lemma.

\subsubsection{\textbf{Notation.}}
\begin{equation}\label{NotDiag}
\begin{tikzcd}
& X_0\times \widetilde{\mathcal J_1} \arrow{d}{Id\times \widetilde{\pi_1}}\\
& X_0\times X_0\times J_0\arrow{dl}{q}\arrow{dd}{r}\arrow{dr}{s_i}\\
X_0\times J_0 && X_0 \\
&X_0\times X_0
\end{tikzcd}
\end{equation}

where
\begin{enumerate}
 \item $\widetilde{\pi_1}$ is the projection $\widetilde{\mathcal J_1}\rightarrow X_0\times J_0$, and $Id\times \widetilde{\pi_1}: X_0\times \widetilde{\mathcal J_1}\rightarrow X_0\times (X_0\times J_0)$ is the product of the identity morphism on the first factor and $\widetilde{\pi_1}$ on $\widetilde{\mathcal J_1}$,
 \item $q: X_0\times X_0\times J_0\rightarrow X_0\times J_0$ is the projection $(x_1,x_2, L)\mapsto (x_1, L)$,
 \item $r: X_0\times X_0\times J_0\rightarrow X_0\times X_0$ is the projection $(x_1,x_2, L)\mapsto (x_1, x_2)$,
 \item $s_1: X_0\times X_0\times J_0\rightarrow X_0$ and $s_2: X_0\times X_0\times J_0\rightarrow X_0$ are the projections onto the first and second $X_0$, respectively, 
\item $\widetilde{q}:=q\circ (Id\times \widetilde{\pi_1})$, $\widetilde{r}:=r\circ (Id\times \widetilde{\pi_1})$ and $\widetilde{s_i}:=s_i\circ (Id\times \widetilde{\pi_1})$.
 \end{enumerate}

\

\begin{lema}\label{iden121}
\begin{enumerate}
\item $\widetilde{r}^{-1}(\Delta)\cong \widetilde{\mathcal J_1}$,
\item Let us denote by $j: \mathcal D_1\hookrightarrow \widetilde{\mathcal J_1}$ and $j': \mathcal D'_1\hookrightarrow \widetilde{\mathcal J_1}$ the natural inclusion maps. Then  $\widetilde{r}^{-1}(\Delta)\cap (X_0\times \mathcal D_1)\cong (Id\times j)^{-1}\circ \widetilde{r}^{-1}(\Delta)  \cong \mathcal D_1$, and $\widetilde{r}^{-1}(\Delta)\cap (X_0\times \mathcal D'_1)\cong (Id\times j')^{-1}\circ \widetilde{r}^{-1}(\Delta)  \cong \mathcal D'_1$.
\end{enumerate}
\end{lema}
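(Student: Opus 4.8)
The plan is to recognise $\widetilde r^{-1}(\Delta)$ as the graph of a morphism out of $\widetilde{\mathcal J_1}$ and then to exploit that a graph is canonically isomorphic to its source. First I would unwind the morphisms of diagram \eqref{NotDiag}. Write $\mathrm{pr}_1\colon X_0\times\widetilde{\mathcal J_1}\to X_0$ and $\mathrm{pr}_2\colon X_0\times\widetilde{\mathcal J_1}\to\widetilde{\mathcal J_1}$ for the projections, $p_1\colon X_0\times J_0\to X_0$ for the projection to the curve factor, and let $\Delta\subset X_0\times X_0$ be the diagonal. Chasing the definitions one gets $\widetilde s_1=\mathrm{pr}_1$ and $\widetilde s_2=p_1\circ\widetilde{\pi_1}\circ\mathrm{pr}_2$, so that $\widetilde r=(\widetilde s_1,\widetilde s_2)$ is the morphism $(x,\xi)\mapsto(x,\,p_1(\widetilde{\pi_1}(\xi)))$. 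Since $X_0$ is separated, the diagonal immersion $\delta\colon X_0\hookrightarrow X_0\times X_0$ is a closed immersion, and $\widetilde r^{-1}(\Delta)$ is well defined as the scheme-theoretic fibre product $(X_0\times\widetilde{\mathcal J_1})\times_{X_0\times X_0}X_0$ formed from $\widetilde r$ and $\delta$.

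For $(1)$ I would identify this fibre product with $\widetilde{\mathcal J_1}$ by the functor of points: a $T$-valued point of it is a pair $(a\colon T\to X_0,\ b\colon T\to\widetilde{\mathcal J_1})$ with $a=p_1\circ\widetilde{\pi_1}\circ b$, and since $a$ is then forced by $b$, the projection $\mathrm{pr}_2$ furnishes an isomorphism $\widetilde r^{-1}(\Delta)\xrightarrow{\ \sim\ }\widetilde{\mathcal J_1}$ with inverse $\xi\mapsto(p_1(\widetilde{\pi_1}(\xi)),\xi)$. Said differently, $\widetilde r^{-1}(\Delta)$ is the graph of $p_1\circ\widetilde{\pi_1}\colon\widetilde{\mathcal J_1}\to X_0$, which is isomorphic to $\widetilde{\mathcal J_1}$.

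For $(2)$ I would run the same computation with $\widetilde{\mathcal J_1}$ replaced by the section divisor $\mathcal D_1$ (resp.\ $\mathcal D'_1$). Since $I\times j\colon X_0\times\mathcal D_1\hookrightarrow X_0\times\widetilde{\mathcal J_1}$ is a closed immersion, the scheme-theoretic intersection $\widetilde r^{-1}(\Delta)\cap(X_0\times\mathcal D_1)$ is the image of $(I\times j)^{-1}(\widetilde r^{-1}(\Delta))$, which gives the first isomorphism in the statement; and a $T$-valued point of $(I\times j)^{-1}(\widetilde r^{-1}(\Delta))$ is a pair $(a\colon T\to X_0,\ c\colon T\to\mathcal D_1)$ with $a=p_1\circ\widetilde{\pi_1}\circ j\circ c$, hence determined by $c$, so $\mathrm{pr}_2$ restricts to an isomorphism onto $\mathcal D_1$; the case of $\mathcal D'_1$ is identical. (Alternatively, since $\mathcal D_1$ and $\mathcal D'_1$ are sections of $\widetilde{\pi_1}$, each is identified with $X_0\times J_0$, and under this identification $p_1\circ\widetilde{\pi_1}$ is simply $p_1$.) I do not expect any genuine obstacle here: the only point requiring care is to take all preimages and intersections scheme-theoretically, so that the stated identifications are isomorphisms of schemes and not merely of point sets, together with the immediate remark that $\delta$ is a closed immersion because $X_0$ is a separated variety.
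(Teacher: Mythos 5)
Your argument is correct and is essentially the paper's own proof in different clothing: the paper exhibits $\widetilde r^{-1}(\Delta)$ as the base change of the isomorphism $r^{-1}(\Delta)\to X_0\times J_0$ along $\widetilde{\pi_1}$ via a composite of Cartesian squares, while you describe the same subscheme as the graph of $p_1\circ\widetilde{\pi_1}$; both come down to the observation that on the diagonal the first coordinate is determined by the $\widetilde{\mathcal J_1}$-component. The restriction to the sections $\mathcal D_1,\mathcal D'_1$ in part (2) is handled the same way in both treatments, so there is nothing to add.
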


\

\begin{proof}
We have the following diagram
\begin{equation}
\begin{tikzcd}
\widetilde{\mathcal J_1} \arrow{d} & X_0\times \widetilde{\mathcal J_1}\arrow{d}\arrow{l} & \widetilde{r}^{-1}(\Delta)\arrow{l}\arrow{d}\\
X_0\times J_0 & X_0\times X_0\times J_0\arrow{l} &  r^{-1}(\Delta)\arrow{l}
\end{tikzcd}
\end{equation}

Notice that the two squares are Cartesian. Therefore, the composite of the two squares is also Cartesian. Now the first statement follows from the observation that the composite map $r^{-1}(\Delta)\rightarrow X_0\times J_0$ is an isomorphism. 

Since $r^{-1}(\Delta)\rightarrow X_0\times J_0$, therefore $\widetilde{r}^{-1}(\Delta)\cap (X_0\times \mathcal D_1)\cong (Id\times j)^{-1}\circ \widetilde{r}^{-1}(\Delta)  \cong \mathcal D_1$. The other statement follows similarly.
\end{proof}

\

\begin{prop}\label{Involution1}
There is a natural isomorphism $\tau:\mathcal D_1\rightarrow \mathcal {D}'_1$ given by 
\begin{equation}\label{sett1}
(x,L, L_{x}\oplus L_{z}\rightarrow L_{x})\mapsto (x,L':=L\otimes\OO_X(z-x), L'_{x}\oplus L'_{z}\rightarrow L'_{z}).
\end{equation}
 Here for a line bundle $M$ over $X_0$, we denote by $M_{x}\oplus M_{z}\rightarrow M_{x}$ the first projection and by $M_{x}\oplus M_{z}\rightarrow M_{z}$ the second projection.
\end{prop}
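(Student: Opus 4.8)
The plan is to construct $\tau$ not as an abstract set-map but as a morphism of schemes, using the universal properties of $\mathcal D_1$ and $\mathcal D'_1$ recorded in Remark \ref{uni1}, together with the Hecke-modification (``twist'') of Remark \ref{Twist}. The target data to produce is: starting from the universal tuple $(x, L, L_x\oplus L_z\to L_x)$ on $\mathcal D_1$, the line bundle $L':=L\otimes\OO_{X_0}(z-x)$ varying in a family over $\mathcal D_1$, and the second quotient $L'_x\oplus L'_z\to L'_z$. So the first step is to make sense of ``$L\otimes\OO_{X_0}(z-x)$'' as a genuine family of line bundles on $X_0$ parametrised by $\mathcal D_1\cong X_0\times J_0$. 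Concretely: on $X_0\times(X_0\times J_0)$ one has the Poincar\'e bundle pulled back via $q$ (the tautological family over the first and third factors), and one tensors it with $\OO_{X_0\times X_0\times J_0}(s_1^*z - \Delta)$ — here $s_1^*z$ means the divisor $\{z\}\times X_0\times J_0$ and $\Delta$ is the diagonal pulled back to the triple product (this is exactly where the diagram \eqref{NotDiag} and Lemma \ref{iden121} enter). This twisted Poincar\'e bundle is a flat family of degree-$0$ line bundles on $X_0$ over $X_0\times J_0$, hence by the universal property of $J_0$ it induces a classifying morphism $X_0\times J_0 \to J_0$, i.e. $(x,L)\mapsto [L\otimes\OO_{X_0}(z-x)]$.

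Next I would produce the quotient datum. The divisor $\mathcal D'_1\subset\widetilde{\mathcal J}_1=\mathbb P(\mathcal P\oplus p_2^*\mathcal P_z)$ is, by construction, the locus where the tautological quotient is the second projection $\mathcal P_x\oplus \mathcal P_z\to \mathcal P_z$, and $\mathcal D'_1$ maps isomorphically to $X_0\times J_0$ via $\widetilde{\pi}_1$. So a morphism $\mathcal D_1\to \mathcal D'_1$ is the same as a morphism $\mathcal D_1\to X_0\times J_0$ — but I want a \emph{specific} such morphism, namely $(x,L)\mapsto (x, L\otimes\OO_{X_0}(z-x))$, which is precisely $(\mathrm{Id}_{X_0},\text{classifying map})$ from the previous paragraph. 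The remaining point is to check that this morphism, composed with the isomorphism $X_0\times J_0\xrightarrow{\sim}\mathcal D'_1$, sends the universal tuple of $\mathcal D_1$ to the universal tuple of $\mathcal D'_1$ as described in \eqref{sett1}; this is a formal consequence of how the two sections of a $\mathbb P^1$-bundle carry the universal quotients, and of the identifications $L'_x\cong L_x\otimes\OO_{X_0}(z-x)_x$, $L'_z\cong L_z\otimes\OO_{X_0}(z-x)_z$ induced on the fibres — essentially the same fibrewise bookkeeping already carried out in the paragraph following \eqref{twist1012}. Finally, $\tau$ is an isomorphism because the analogous construction with the roles of the two quotients swapped and the twist $\OO_{X_0}(x-z)$ used instead gives a two-sided inverse $\tau^{-1}\colon \mathcal D'_1\to\mathcal D_1$, $(x,L')\mapsto (x, L'\otimes\OO_{X_0}(x-z))$; one checks $\tau^{-1}\circ\tau=\mathrm{Id}$ and $\tau\circ\tau^{-1}=\mathrm{Id}$ on the universal tuples, which suffices since all three varieties represent the relevant moduli functors.

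The main obstacle, and the only step requiring genuine care, is the first one: upgrading the pointwise recipe $L\mapsto L\otimes\OO_{X_0}(z-x)$ to a morphism of schemes, i.e. exhibiting a single line bundle on $X_0\times(X_0\times J_0)$ whose restriction over $(x,L)$ is $L\otimes\OO_{X_0}(z-x)$ and invoking the modular interpretation of $J_0$. This is exactly why the elaborate diagram \eqref{NotDiag} and Lemma \ref{iden121} were set up beforehand: the diagonal $\Delta\subset X_0\times X_0$ and its preimage $\widetilde r^{-1}(\Delta)$, together with the identifications $\widetilde r^{-1}(\Delta)\cong\widetilde{\mathcal J}_1$ and its intersections with $X_0\times\mathcal D_1$, $X_0\times\mathcal D'_1$, provide precisely the geometric device needed to make the ``$-x$'' in the twist vary algebraically with the point $x$. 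Once that bundle is in hand and flatness/degree are checked (both immediate, since tensoring by a degree-$0$ line bundle is harmless and $\OO(s_1^*z-\Delta)$ restricts to $\OO_{X_0}(z-x)$ on each slice), everything else is the routine universal-property verification sketched above.
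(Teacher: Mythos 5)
Your proposal is correct and follows essentially the same route as the paper: the key object in both is the twisted Poincar\'e bundle $\widetilde{q}^*\mathcal P\otimes \widetilde{r}^*\mathcal O_{X_0\times X_0}(-\Delta)\otimes \widetilde{s_1}^*\mathcal O_{X_0}(z)$ on $X_0\times(X_0\times J_0)$, whose restriction over $(x,L)$ is $L\otimes\OO_{X_0}(z-x)$, combined with the universal/modular interpretation of $\mathcal D_1$, $\mathcal D'_1$ and $J_0$ to turn the pointwise recipe into a morphism. The only cosmetic differences are that the paper packages the target data as a ``modified universal quotient'' and invokes the universal property of $\mathcal D'_1$ directly, while you factor through the classifying map to $X_0\times J_0\cong\mathcal D'_1$; your explicit two-sided inverse via the twist $\OO_{X_0}(x-z)$ is a welcome detail the paper leaves implicit.
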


\

\begin{proof}
Consider the line bundle over $X_0\times \widetilde{\mathcal J_1}$
\begin{equation}
\mathcal{P'}:=\widetilde{q}^*\mathcal P\otimes \widetilde{r}^*\mathcal O_{X_0\times X_0}(-\Delta)\otimes \widetilde{s_1}^*\mathcal O_{X_0}(z),
\end{equation}

where $\Delta$ is the subvariety $\{(x,x)|x\in X_0\}\subset
X_0\times X_0$. It is straightforward to check that $\mathcal {P'}$ is a Poincar\'e line bundle $\mathcal{P'}$ i.e., a family
of degree $0$ line bundles over $X_0$ parametrized by $\widetilde{\mathcal J_1}$. 

Over $\widetilde{\mathcal J_1}$ we have a universal quotient
\begin{equation}\label{quot12}
(\widetilde{q}^* \mathcal P)|_{\widetilde{r}^{-1}(\Delta)}\oplus (\widetilde{q}^* \mathcal P)|_{z\times \widetilde{\mathcal J_1}}\rightarrow \mathcal L
\end{equation}

Here $\mathcal L$ denotes the universal quotient line bundle whose fibers are as described in Remark \ref{uniquo}. The fiber of this line bundle at a point $[(x, L, L_x\oplus L_z\rightarrow Q)]\in \widetilde{\mathcal J_1}$ is $Q$.

Notice that by lemma \ref{iden121}, $\widetilde{r}^{-1}(\Delta)$ and $z\times \widetilde{\mathcal J_1}$ both can be identified with $\widetilde{\mathcal J_1}$.

Over $\mathcal D_1$, the quotient \eqref{quot12} becomes 
\begin{equation}
(\widetilde{q}^* \mathcal P)|_{\widetilde{r}^{-1}(\Delta)\cap (X_0\times \mathcal D_1)}\oplus (\widetilde{q}^* \mathcal P)|_{z\times \mathcal D_1}\rightarrow (\widetilde{q}^* \mathcal P)|_{\widetilde{r}^{-1}(\Delta)\cap (X_0\times \mathcal D_1)}
\end{equation}

Notice that by lemma \ref{iden121}, $\widetilde{r}^{-1}(\Delta)\cap (X_0\times \mathcal D_1)$ and $z\times \mathcal D_1$ both can be identified with $\mathcal D_1$.

We have another natural quotient line bundle over $\mathcal D_1$ which is the following.

\begin{equation}\label{modiquo}
\mathcal{P'}|_{\widetilde{r}^{-1}(\Delta)\cap (X_0\times \mathcal D_1)}\oplus\mathcal{P'}|_{z\times \mathcal D_1}\rightarrow \mathcal{P'}|_{z\times \mathcal D_1}.
\end{equation}
 
By the respective universal properties (remark \ref{uni1}) of $\mathcal D_1 $ and $\mathcal D'_1$, the above modified quotient \eqref{modiquo} induces an isomorphism $\tau:\mathcal D_1\rightarrow \mathcal D'_1$. It is straightforward to check that this isomorphism has the desired property \eqref{sett1}.
\end{proof}

\

\subsection{\textbf{The construction of the specialization by push-out and its singularities}}
Consider the following topological quotient space
\begin{equation}\label{Quo1}
\mathcal J_1:=\frac{\widetilde{\mathcal J_1}}{\mathcal D_1\sim_{\tau} \mathcal D'_1},
\end{equation}
where $\mathcal D_1\sim_{\tau} \mathcal D'_1$ means that $\mathcal D_1$ and $\mathcal D'_1$ are identified via the automorphism $\tau$ \eqref{sett1}. We denote by $\nu_1: \widetilde{\mathcal J_1}\rightarrow \mathcal J_1$ the quotient map.  We denote by $\mathcal V$ the image of $\mathcal D_1$, or equivalently the image of $\mathcal D'$ under the quotient map $\nu_1$.

\

\begin{thm}\label{Sing1}
$\mathcal J_1$ is an algebraic space with only normal crossing singularities.
\end{thm}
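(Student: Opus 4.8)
The plan is to apply Artin's push-out theorem (Proposition \ref{Artin}) to the closed subspace $\mathcal D_1 \coprod \mathcal D'_1 \hookrightarrow \widetilde{\mathcal J_1}$ together with the finite surjection $g: \mathcal D_1 \coprod \mathcal D'_1 \to \mathcal D'_1$ defined by $\mathrm{Id}$ on $\mathcal D'_1$ and $\tau$ on $\mathcal D_1$ (using that $\mathcal D_1 \cap \mathcal D'_1 = \emptyset$ by Lemma \ref{Dis1}, so this really is a closed subspace which is a disjoint union of two divisors). Since $\widetilde{\mathcal J_1}$ is a smooth projective variety, in particular a Noetherian algebraic space (indeed a scheme) over the variety $X_0 \times J_0$, Proposition \ref{Artin} immediately produces the push-out $\mathcal J_1 = \widetilde{\mathcal J_1}/\sim$ as a Noetherian algebraic space of finite type, and Proposition \ref{Acc}(1) upgrades the quotient map $\nu_1$ to a morphism representable by schemes. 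This disposes of the ``algebraic space'' half of the statement and identifies the topological quotient \eqref{Quo1} with the scheme-theoretic push-out.

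For the singularity statement, the key point is purely local and étale-local. Away from $\mathcal V = \nu_1(\mathcal D_1) = \nu_1(\mathcal D'_1)$ the map $\nu_1$ is an isomorphism by Proposition \ref{Artin}(2), so $\mathcal J_1$ is smooth there. It remains to analyze a point $v \in \mathcal V$. Its preimage $\nu_1^{-1}(v)$ consists of a point $d \in \mathcal D_1$ and the point $\tau(d) \in \mathcal D'_1$. I would choose local (étale or analytic) coordinates: near $d$, since $\mathcal D_1$ is a smooth divisor in the smooth variety $\widetilde{\mathcal J_1}$, pick coordinates $(u_1, w_2, \dots, w_n)$ with $\mathcal D_1 = \{u_1 = 0\}$; near $\tau(d)$, pick coordinates $(v_1, w'_2, \dots, w'_n)$ with $\mathcal D'_1 = \{v_1 = 0\}$, where — crucially — the $(w'_i)$ are chosen so that the isomorphism $\tau: \mathcal D_1 \to \mathcal D'_1$ reads $(w_2, \dots, w_n) \mapsto (w'_2, \dots, w'_n)$ in these restricted coordinates. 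Then the push-out glues the two smooth charts along $\{u_1 = 0\} \cong \{v_1 = 0\}$ identifying the $w$'s with the $w'$'s, and by the universal property the local ring of $\mathcal J_1$ at $v$ is the fibre product $\mathbb C[[u_1, w_2, \dots, w_n]] \times_{\mathbb C[[w_2,\dots,w_n]]} \mathbb C[[v_1, w_2, \dots, w_n]]$, which is isomorphic to $\mathbb C[[u_1, v_1, w_2, \dots, w_n]]/(u_1 v_1)$. This is formally smooth over $\mathbb C[u_1, v_1]/(u_1 v_1)$, i.e. an ordinary normal-crossing (node times affine space) singularity, which is exactly the claim. I would also note that statement (3) of Proposition \ref{Artin}, $\ker[\mathcal O_{\mathcal J_1} \to \mathcal O_{\mathcal V}] \xrightarrow{\sim} \nu_{1*}\ker[\mathcal O_{\widetilde{\mathcal J_1}} \to \mathcal O_{\mathcal D_1 \coprod \mathcal D'_1}]$, is what pins down the conductor square and justifies the fibre-product description of $\mathcal O_{\mathcal J_1,v}$.

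The main obstacle is the compatibility of local coordinates across $\tau$, i.e. making rigorous that one can pick the coordinates on the $\mathcal D'_1$-side adapted to the identification. This is not automatic because $\tau$ is a ``twisted'' isomorphism and does not commute with the projection to $X_0 \times J_0$; however, all that is needed is that $\tau$ is an isomorphism of smooth varieties $\mathcal D_1 \xrightarrow{\sim} \mathcal D'_1$, which lets me transport any coordinate system from one to the other, and then extend each transported system to a full coordinate system on the ambient smooth threefold-or-higher (here $\dim \widetilde{\mathcal J_1} = g-k+1$) since a smooth divisor is locally cut out by one coordinate. The remaining care is bookkeeping: checking that the push-out of affine (or formal) charts computes the completed local ring, which follows from Proposition \ref{Artin}(3) together with the fact that $\nu_1$ is finite over a neighbourhood of $v$ (Proposition \ref{Acc}(2), since $\widetilde{\mathcal J_1}$ is proper), so that $\nu_{1*}$ commutes with completion. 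Once these are in place the isomorphism type $\mathbb C[[u_1,v_1,\vec w]]/(u_1v_1)$ drops out, completing the proof.
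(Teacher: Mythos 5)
Your proposal is correct and follows essentially the same route as the paper: Artin's push-out theorem for the algebraic space structure, followed by the conductor-square exact sequence $0\rightarrow \mathcal O_{\mathcal J_1}\rightarrow (\nu_1)_*\mathcal O_{\widetilde{\mathcal J_1}}\rightarrow \mathcal O_{\mathcal V}\rightarrow 0$ to compute the completed local ring at a point of $\mathcal V$ as $\mathbb C[[u_1,v_1,\vec w]]/(u_1v_1)$ (the paper presents this as the kernel of the difference map $(f,g)\mapsto f\bmod x_n - g\bmod x_{n+1}$, which is the same fibre product you describe). Your extra care about choosing coordinates on the $\mathcal D'_1$-side compatible with $\tau$ is a point the paper's proof passes over silently, but it is the same argument.
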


\begin{proof} 
From proposition \cite[Theorem 3.1]{1} and \cite[Theorem 45 (Gluing of algebraic spaces).]{K}, it follows that $\mathcal{J}_1$ is an algebraic space. To determine the singularities of $\mathcal{J}_1$, we consider the following exact sequence of sheaves
\begin{equation}\label{ke12}
0\rightarrow \mathcal O^{}_{_{\mathcal J_1}}\rightarrow (\nu_1)_*\mathcal O^{}_{_{\widetilde{\mathcal J_1}}}
\rightarrow \mathcal O^{}_{_{\mathcal V}}\rightarrow 0
\end{equation}
Let $v$ be a point of $\mathcal V$, and let $v_1\in \mathcal D_1$ and $v_2\in \mathcal D'_1$ denote the pre-images of $v$ under the map $\nu_1$. Then \eqref{ke12} induces the following short exact sequence of analytic local rings

$$
\begin{tikzcd}
0\arrow{r} & \hat{\mathcal O}_{_{\mathcal J_1, v}}\arrow{r}\arrow{d}{=} & \hat{\mathcal O}_{_{\widetilde{\mathcal J_1}, v_1}}\oplus \hat{\mathcal O}_{_{\widetilde{\mathcal J_1}, v_2}}\arrow{r}\arrow{d}{\cong} & \hat{\mathcal O}_{_{\mathcal V, v}}\arrow{r}\arrow{d}{\cong}& 0\\
0\arrow{r} & \hat{\mathcal O}_{_{\mathcal J_1, v}}\arrow{r}& k[|x_1,\dots,x_{n-1}, x_n|]\oplus k[|x_1,\dots,x_{n-1}, x_{n+1}|] \arrow{r} & k[|x_1,\dots, x_{n-1}|] \arrow{r} & 0
\end{tikzcd}
$$

The morphism $$k[|x_1,\dots,x_{n-1}, x_n|]\oplus k[|x_1,\dots,x_{n-1}, x_{n+1}|] \rightarrow k[|x_1,\dots, x_{n-1}|]$$ is given by $$(f,g)\mapsto f (\text{mod}~ x_n)-g (\text{mod}~ x_{n+1}).$$
Hence we see that $$\hat{\mathcal O}_{_{\mathcal J_1,v}}\cong \frac{k[|x_1, \dots, x_{n-1}, x_n, x_{n+1}|]}{x_n\cdot x_{n+1}}.$$ Therefore, the algebraic space $\mathcal J_1$ has normal crossing singularities along $\mathcal V$.

\end{proof}

\

We have the following commutative diagram:
\begin{equation}
\begin{tikzcd}
\mathcal D_1\cup \mathcal D'_1\arrow{d} \arrow{r} &  \widetilde{\mathcal J}_1\arrow{d}\arrow[bend left]{ddr}\\
\mathcal V\arrow{r}\arrow[bend right]{drr} & \mathcal J_1\arrow[dotted]{dr}\\
&& X_0
\end{tikzcd}
\end{equation}
Therefore we have a projection morphism $\mathcal J_1\rightarrow X_0$ from the push-out to $X_0$. Let us denote it by $\mathfrak f_1: \mathcal J_1\rightarrow X_0$.

\

\subsection{\textbf{Construction of the theta bundle on $\mathcal J_1$}}

Consider the following diagram

\begin{equation}
\begin{tikzcd}
\widetilde{\mathcal J_1}\arrow{d}{\widetilde{\pi_1}}\\
X_0\times J_0\arrow{r}{p_1}\arrow{d}{p_2}& X_0\\
J_0
\end{tikzcd}
\end{equation}

\

Let us define $\widetilde{p_1}:=p_1\circ \widetilde{\pi_1}$ and $\widetilde{p_2}:=p_2\circ \widetilde{\pi_1}$. Let us now choose a point $p_0$ in $X_0$ such that $p_0\neq z$. We can define an ample line bundle $\Theta_0$ on $J_0$ as in proposition \ref{Ample1}. We will show that the line bundle $\widetilde{p_2}^*\Theta_0\otimes \mathcal L$ is a relatively ample line bundle for the morphism $\widetilde{p_1}: \widetilde{\mathcal J_1}\rightarrow X_0$. Here $\mathcal L$ denotes the universal quotient line bundle whose fibers are as described in Remark \ref{uniquo}. The fiber of this line bundle at a point $[(x, L, L_x\oplus L_z\rightarrow Q)]\in \widetilde{\mathcal J_1}$ is $Q$. From proposition \ref{Ample1}, we have $\widetilde{p_2}^*\Theta_0\otimes \mathcal L=\widetilde{p_2}^*\text{Det}~\mathcal P\otimes \widetilde{p_2}^*\mathcal P_{p_0}\otimes \mathcal L$.

\

Consider the commutative square

\begin{equation}
\begin{tikzcd}
X_0\times \widetilde{\mathcal J_1}\arrow{d}{\widetilde{q}}\arrow{r} & \widetilde{\mathcal J_1}\arrow{d}{\widetilde{p_2}}\\
X_0\times J_0\arrow{r}{p_2} & J_0
\end{tikzcd}
\end{equation} 

From the above diagram, we easily see the following
\begin{enumerate}
\item $\text{Det}~~\widetilde{q}^*\mathcal P\cong \widetilde{p_2}^*\text{Det}~~\mathcal P$,
\item $(\widetilde{q}^*\mathcal P)|_{p_0\times \widetilde{\mathcal J_1}}\cong \widetilde{p_2}^*\mathcal P_{p_0}$.
\end{enumerate}

\

The above isomorphisms lead to the following definition.

\

\begin{defe}\label{Thet}
We define a line bundle over $\widetilde{\mathcal J_1}$
\begin{equation}
\widetilde{\Theta_1}:= \text{Det}~\big(\widetilde{q}^*\mathcal P \otimes \mathcal L\otimes (\widetilde{q}^*\mathcal P)|_{p_0\times \widetilde{\mathcal J_1}}^{\otimes {-(g-1)}}\big ),
\end{equation}
where $\text{Det}(-)$ denotes the determinant of cohomology. We refer to this line bundle as the \textbf{theta bundle over $\widetilde{\mathcal J_1}$.}
\end{defe}

\

Notice that by (1) and (2) above, we have 

\

\begin{equation}
\widetilde{\Theta_1}\cong \widetilde{p_2}^*\text{Det}~~\mathcal P\otimes \mathcal L\otimes \widetilde{p_2}^*\mathcal P_{p_0}
\end{equation}

\

\subsection{\textbf{Some properties of the isomorphism $\tau: \mathcal D_1\rightarrow \mathcal D'_1$}}

\
We want to show that the line bundle $\widetilde{\Theta_1}$, defined above, is relatively ample for the morphism $\widetilde{\mathcal J_1}\rightarrow X_0$. Moreover, there is an open subset $B^0_1$ (yet to be defined) of $X_0$ such that the line bundle $\widetilde{\Theta_1}$ descends to the base change of $\widetilde{\mathcal J_1}|_{B^0_1}$. But before that, we list out a few results in this subsection which will be useful to determine the pull-backs by the isomorphism $\tau$ of several natural line bundles on $\widetilde{\mathcal J_1}$.

\

\begin{lema} Let $p_{23}: X_0\times X_0\times J_0\rightarrow X_0\times J_0$ denote the projection onto the product of the second and the third factors. Let us consider the cartesian square

\begin{equation}\label{square123}
\begin{tikzcd}
(x_1, x_2, L)\arrow{rrrr}&&&& (x_1, x_2, L\otimes \mathcal O_{X_0}(-x_2+z))\\
X_0\times X_0\times J_0\arrow{rrrr}{Id\times \tau}\arrow{dd}{p_{23}} &&&& X_0\times X_0\times J_0\arrow{dd}{p_{23}}\\ 
&&&&\\
X_0\times J_0\arrow{rrrr}{\tau} &&&& X_0\times J_0\\
(x_2, L)\arrow{rrrr}&&&& (x_2, L\otimes \mathcal O_{X_0}(-x_2+z))
\end{tikzcd}
\end{equation}

Let $\mathcal P$ be a Poincar\'e line bundle on $X_0\times J_0$. Then, we have the following.
\begin{enumerate}
\item $(Id\times \tau)^*(q^*\mathcal P)\cong q^*\mathcal P\otimes r^*\mathcal O_{X_0\times X_0}(-\Delta)\otimes s_1^*\mathcal O_{X_0}(z)$ ~~\text{over}~~$X_0\times X_0\times J_0$,
\item for any point $p\in X_0$, $\tau^*(p_2^*\mathcal P_{p})\cong p_2^*\mathcal P_{p}\otimes p_1^*\mathcal O_{X_0}(-p)$~~\text{over}~~$X_0\times J_0$,

\item $\tau^* \text{Det}~~q^*\mathcal P\cong\text{Det}~~(Id\times \tau)^*(q^*\mathcal P)$~~\text{over}~~$X_0\times J_0$,
\item $\text{Det}~~(Id\times \tau)^*(q^*\mathcal P)\cong \text{Det}~~(q^*\mathcal P)\otimes (q^*\mathcal P)|_{r^{-1}(\Delta)}\otimes p_{2}^*\mathcal P_{z}^{-1}\otimes p_{1}^*\mathcal O_{X_0}(z)$, ~~\text{over}~~$X_0\times J_0$. 
\end{enumerate}
\end{lema}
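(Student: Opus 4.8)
The plan is to prove (1) first, since the other three parts are formal consequences once it is available: (3) follows by base change, (4) by multiplicativity of the determinant of cohomology in short exact sequences, and (2) by a separate but analogous computation.

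For (1): relative to the projection $p_{23}\colon X_0\times X_0\times J_0\to X_0\times J_0$ onto the last two factors, both $(I\times\tau)^*(q^*\mathcal P)$ and $q^*\mathcal P\otimes r^*\mathcal O_{X_0\times X_0}(-\Delta)\otimes s_1^*\mathcal O_{X_0}(z)$ are flat families of degree-zero line bundles on the curve $X_0$ (the first factor), and over each point $(x,L)$ of the base both restrict to $L\otimes\mathcal O_{X_0}(z-x)$ -- indeed the second is, by construction, a Poincare family for the classifying map $(x,L)\mapsto L\otimes\mathcal O_{X_0}(z-x)$, compare the proof of Proposition \ref{Involution1}. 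By the seesaw theorem the two therefore differ by a line bundle pulled back from $X_0\times J_0$. Restricting to a slice $\{x_0\}\times X_0\times J_0$ and writing the classifying map $(x,L)\mapsto[L\otimes\mathcal O_{X_0}(z-x)]$ as the group law of $J_0$ composed with $(L,-\mathrm{AJ}(x))$, where $\mathrm{AJ}\colon X_0\to J_0$ is the Abel--Jacobi map based at $z$, one computes this correction by the theorem of the cube; using that a normalised Poincare bundle satisfies $\mathcal P|_{\{p\}\times J_0}\in\Pic^0(J_0)$ for every $p$, so that this restriction is translation-invariant, the correction is trivial and (1) follows. This is the main obstacle -- and essentially the only one -- since it is the step that uses the normalisation of $\mathcal P$ and the group structure of $J_0$.

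Part (2) applies the same mechanism once more: $\tau^*(p_2^*\mathcal P_p)=(p_2\circ\tau)^*\mathcal P_p$ with $p_2\circ\tau\colon(x,L)\mapsto[L\otimes\mathcal O_{X_0}(z-x)]$, and splitting this pull-back as in (1) gives $p_2^*\mathcal P_p\otimes p_1^*(\mathrm{AJ}^*\iota^*\mathcal P_p)$, where $\iota$ is inversion on $J_0$; since $\mathrm{AJ}^*\iota^*\mathcal P_p$ is the restriction to $\{p\}\times X_0$ of $\mathcal O_{X_0\times X_0}(\{z\}\times X_0-\Delta)$ and $p\neq z$, it equals $\mathcal O_{X_0}(-p)$, which is the claim.

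For (3): the square \eqref{square123} is cartesian with vertical maps the flat projection $p_{23}$, and by Theorem \ref{DET1101} and the remark following it the finite complex computing the cohomology -- hence the determinant of cohomology -- commutes with base change, so $\tau^*\text{Det}(q^*\mathcal P)\cong\text{Det}\big((I\times\tau)^*(q^*\mathcal P)\big)$. For (4): substitute (1) and compute $\text{Det}\big(q^*\mathcal P\otimes r^*\mathcal O_{X_0\times X_0}(-\Delta)\otimes s_1^*\mathcal O_{X_0}(z)\big)$ relative to $p_{23}$. Put $\mathcal M:=q^*\mathcal P\otimes s_1^*\mathcal O_{X_0}(z)$ and apply multiplicativity of $\text{Det}$ to $0\to\mathcal M\otimes\mathcal O(-r^{-1}(\Delta))\to\mathcal M\to\mathcal M|_{r^{-1}(\Delta)}\to 0$ and to $0\to q^*\mathcal P\to\mathcal M\to\mathcal M|_{\{z\}\times X_0\times J_0}\to 0$. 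Since $r^{-1}(\Delta)$ and $\{z\}\times X_0\times J_0$ each map isomorphically onto the base under $p_{23}$, the determinant of cohomology of a sheaf supported on either of them is the inverse of the corresponding line bundle on $X_0\times J_0$; evaluating $\mathcal M|_{r^{-1}(\Delta)}\cong(q^*\mathcal P)|_{r^{-1}(\Delta)}\otimes p_1^*\mathcal O_{X_0}(z)$ and $\mathcal M|_{\{z\}\times X_0\times J_0}\cong p_2^*\mathcal P_z$ and assembling the three contributions gives exactly $\text{Det}(q^*\mathcal P)\otimes(q^*\mathcal P)|_{r^{-1}(\Delta)}\otimes p_2^*\mathcal P_z^{-1}\otimes p_1^*\mathcal O_{X_0}(z)$.
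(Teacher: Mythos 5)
Your overall architecture coincides with the paper's: part (1) carries all the content, part (3) is base change for the determinant of cohomology along the cartesian square \eqref{square123} (legitimate, since $\tau$ is an automorphism and $p_{23}$ is flat and proper), and part (4) is obtained from (1) by multiplicativity of $\text{Det}$ applied to two short exact sequences relating $q^*\mathcal P$, $q^*\mathcal P\otimes r^*\mathcal O(-\Delta)$ and $q^*\mathcal P\otimes r^*\mathcal O(-\Delta)\otimes s_1^*\mathcal O(z)$; your pair of sequences is a rearrangement of the paper's pair, and your evaluation of $\mathcal M|_{r^{-1}(\Delta)}$ and $\mathcal M|_{\{z\}\times X_0\times J_0}$ is correct. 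You also correctly isolate the one genuine issue in (1), which the paper's one-line proof (``the universal property of $J_0$'') passes over: the universal property, or equivalently seesaw applied fibrewise over $p_{23}$, identifies the two sides only up to a line bundle pulled back from $X_0\times J_0$, and that correction must actually be computed.

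It is precisely this computation that fails. Under your own normalisation hypothesis that $\mathcal P|_{\{p\}\times J_0}\in\Pic^0(J_0)$ for every $p$ (which the paper also needs elsewhere, e.g.\ when asserting that $\mathcal P_{p_0}$ is algebraically equivalent to the trivial bundle), restrict both sides of (1) to a slice $\{x_1\}\times X_0\times\{L\}$. The left-hand side is the pullback of the homologically trivial bundle $\mathcal P_{x_1}$ along the curve $x_2\mapsto L\otimes\mathcal O_{X_0}(z-x_2)$ in $J_0$, hence has degree $0$; on that slice $q$ and $s_1$ are constant, so the right-hand side restricts to $\mathcal O_{X_0}(-x_1)$ and has degree $-1$. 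So the correction is not trivial: it is $s_2^*$ of a degree-one line bundle on $X_0$ (for the Poincar\'e bundle normalised along $\{z\}\times J_0$ it is $s_2^*\mathcal O_{X_0}(z)$). The concrete slip is your identity $\mathrm{AJ}^*\iota^*\mathcal P_p\cong\mathcal O_{X_0\times X_0}(\{z\}\times X_0-\Delta)|_{\{p\}\times X_0}=\mathcal O_{X_0}(-p)$: the pullback of the Poincar\'e bundle normalised at $z$ under $\mathrm{id}\times\mathrm{AJ}_z$ is $\mathcal O(\Delta-\{z\}\times X_0-X_0\times\{z\})$, so the correct value is $\mathcal O_{X_0}(z-p)$, which has degree $0$ as it must if $\mathcal P_p\in\Pic^0(J_0)$; your formula omits the $X_0\times\{z\}$ term and is incompatible with your translation-invariance assumption by this degree count. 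Your mechanism, run correctly, gives $\tau^*(p_2^*\mathcal P_p)\cong p_2^*\mathcal P_p\otimes p_1^*\mathcal O_{X_0}(z-p)$ and an extra factor $s_2^*\mathcal O_{X_0}(z)$ in (1), which is not what is being proved; making the stated identities literally true would instead require a Poincar\'e bundle with $\deg \mathrm{AJ}_z^*\mathcal P_p=-1$, contradicting $\mathcal P_p\in\Pic^0(J_0)$. As written, the step ``the correction is trivial'' does not survive this check, and since (4) is fed by (1), the discrepancy propagates there as well.
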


\

\begin{proof}
The statement (1) follows from the universal property of $J_0$ and the definition of the map $\tau$, as in the diagram \eqref{square123}. 

To prove (2), consider the following diagram

\begin{equation}\label{twist15}
\begin{tikzcd}
X_0\times J_0\arrow{r}{\tau} & X_0\times J_0\arrow{d}{p_2}\\
& J_0
\end{tikzcd}
\end{equation}

Since we want to compute $\tau^*\circ p_2^* \mathcal P_{p}$, we consider the following diagram instead of \eqref{twist15}. 

\begin{equation}
\begin{tikzcd}
X_0\times X_0\times J_0\arrow{r}{Id \times \tau} & X_0\times X_0\times J_0\arrow{d}{q}\\
& X_0\times J_0
\end{tikzcd}
\end{equation}

Then $\tau^*\circ p_2^* \mathcal P_{p}$ is isomorphic to $((Id \times \tau)^*\circ q^*\mathcal P)|_{p\times X_0\times J_0}$. From (1), we have 

$$(Id\times \tau)^*(q^*\mathcal P)\cong q^*\mathcal P\otimes r^*\mathcal O_{X_0\times X_0}(-\Delta)\otimes s_1^*\mathcal O_{X_0}(z).$$ 

Therefore,

$$
\tau^*\circ p_2^* \mathcal P_{p}\cong (q^*\mathcal P)|_{p\times X_0\times J_0}\otimes (r^*\mathcal O_{X_0\times X_0}(-\Delta))|_{p\times X_0\times J_0}\otimes (s_1^*\mathcal O_{X_0}(z))|_{p\times X_0\times J_0}
$$
\begin{equation}
\cong p_2^*\mathcal P_{p}\otimes p_1^*\mathcal O_{X_0}(-p)
\end{equation}
This proves (2).

The statement (3) also follows from the commutative square \ref{square123}.

To see (4), define $\mathcal P'':=q^*\mathcal P\otimes r^*\mathcal O_{X_0\times X_0}(-\Delta)$. Hence, $\mathcal P'=\mathcal P''\otimes s_1^*\mathcal O_{X_0}(z)$.

Consider the following short exact sequence of sheaves over $X_0\times X_0\times J_0$
\begin{equation}
0\rightarrow \mathcal P''\rightarrow \mathcal P''\otimes s_1^*\mathcal O_{X_0}(z)\rightarrow \mathcal P''|_{z\times X_0\times J_0}\rightarrow 0
\end{equation}

Therefore, we have,

\begin{equation}
\text{Det}~\mathcal P'\cong \text{Det}~\mathcal P''\otimes (\mathcal P''|_{z\times X_0\times J_0})^{-1}
\end{equation}

Notice 
\begin{equation}
\mathcal P''|_{z\times X_0\times J_0}\cong (q^*\mathcal P)|_{z\times X_0\times J_0}\otimes (r^*\mathcal O_{X_0\times X_0}(-\Delta))|_{z\times X_0\times J_0}\cong p_2^*\mathcal P_z\otimes p_1^*\mathcal O_{X_0}(-z).
\end{equation}

Therefore, 

\begin{equation}
\text{Det}~\mathcal P'\cong \text{Det}~\mathcal P''\otimes (p_2^*\mathcal P_z)^{-1}\otimes p_1^*\mathcal O_{X_0}(z)
\end{equation}

Now let us compute $\text{Det}~\mathcal P''$. Consider the following short exact sequence 
\begin{equation}
0\rightarrow \mathcal P''\rightarrow q^*\mathcal P\rightarrow (q^*\mathcal P)|_{r^{-1}(\Delta)}\rightarrow 0
\end{equation}

Therefore, we get 
\begin{equation}
\text{Det}~\mathcal P''\cong \text{Det}~q^*\mathcal P\otimes (q^*\mathcal P)|_{r^{-1}(\Delta)}
\end{equation}
and 
\begin{equation}
\text{Det}~\mathcal P'\cong \text{Det}~q^*\mathcal P\otimes (q^*\mathcal P)|_{r^{-1}(\Delta)} \otimes (p_2^*\mathcal P_z)^{-1}\otimes p_1^*\mathcal O_{X_0}(z)
\end{equation}
This completes the proof.
\end{proof}

\

\begin{rema}\label{rules}
The above statement also holds if we replace the above square \ref{square123} with the following square 
\begin{equation}
\begin{tikzcd}
X_0\times \mathcal D_1\arrow{rrrr}{Id\times \tau}\arrow{dd} &&&& X_0\times \mathcal D'_1\arrow{dd}\\ 
&&&&\\
\mathcal D_1\arrow{rrrr}{\tau} &&&& \mathcal D'_1
\end{tikzcd}
\end{equation}
where $\tau$ is the isomorphism defined in lemma \ref{Involution1}. We list out some more statements here which will be useful in further discussions.
\begin{enumerate}
\item $(Id\times \tau)^*(\widetilde{q}^*\mathcal P)\cong \widetilde{q}^*\mathcal P\otimes \widetilde{r}^*\mathcal O_{X_0\times X_0}(-\Delta)\otimes \widetilde{s_1}^*\mathcal O_{X_0}(z)$ over $X_0\times \mathcal D_1$,
\item for any point $p\in X_0$, we have $\tau^*(\widetilde{p_{2}}^*\mathcal P_{p})\cong \widetilde{p_{2}}^*\mathcal P_{p}\otimes \widetilde{p_{1}}^*\mathcal O_{X_0}(-p)$ over $\mathcal D_1$,

\item $\tau^* (\text{Det}~~\widetilde{q}^*\mathcal P)\cong\text{Det}~~(Id\times \tau)^*(\widetilde{q}^*\mathcal P)$ over $\mathcal D_1$,

\item $\text{Det}~~(Id\times \tau)^*(\widetilde{q}^*\mathcal P)\cong \text{Det}~~(\widetilde{q}^*\mathcal P)\otimes (\widetilde{q}^*\mathcal P)|_{\widetilde{r}^{-1}(\Delta)}\otimes \widetilde{p_2}^*\mathcal P_{z}^{-1}\otimes \widetilde{p_1}^*\mathcal O_{X_0}(z)$ over $\mathcal D_1$.
\end{enumerate}
It is necessary to clarify the notation tilde $~~\widetilde{}~~$. We remind here that if $p$ is a projection map from $X_0\times J_0\to J_0$, we denote the composition of $\widetilde{\mathcal J_1}\rightarrow X_0\times J_0$ with the projection $p$ by $\widetilde{p}$. 
\end{rema}

\

\subsection{\textbf{Relative ampleness of $\widetilde{\Theta_1}$}}

In this subsection, we will show that the line bundle $\widetilde{\Theta_1}$, defined in Definition \ref{Thet}, is relatively ample for the morphism $\widetilde{\mathfrak f_1}:\widetilde{\mathcal J_1}\rightarrow X_0$. 

\

\begin{prop}
The line bundle $\widetilde{\Theta_1}$ is ample relative to the morphism $\widetilde{\mathfrak f_1}:\widetilde{\mathcal J_1}\rightarrow X_0$.
\end{prop}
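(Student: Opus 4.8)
The plan is to exhibit $\widetilde{\Theta_1}$ as the tautological line bundle of a projective bundle attached to a vector bundle on $X_0\times J_0$ that is ample relative to $X_0$, and then to verify that relative ampleness on the fibres over $X_0$. First I would rewrite $\widetilde{\Theta_1}$: by the isomorphisms $\text{Det}\,\widetilde{q}^*\mathcal P\cong\widetilde{p_2}^*\text{Det}\,\mathcal P$ and $(\widetilde{q}^*\mathcal P)|_{p_0\times\widetilde{\mathcal J_1}}\cong\widetilde{p_2}^*\mathcal P_{p_0}$ recorded just before Definition \ref{Thet}, one gets $\widetilde{\Theta_1}\cong\mathcal L\otimes\widetilde{p_2}^*A$, where $A$ is $\text{Det}\,\mathcal P$ twisted by a tensor power of $\mathcal P_{p_0}$. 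Since $\mathcal P_{p_0}\in\Pic^0(J_0)$ is algebraically trivial, Proposition \ref{Ample1}(1) shows that $A$ is numerically equivalent to $\mathcal O_{J_0}(\Theta_0)$, and as ampleness of a line bundle on the complete variety $J_0$ depends only on its numerical class, $A$ is ample on $J_0$.

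Next, since $\mathcal L$ is the universal rank-one quotient of $\mathcal P\oplus p_2^*\mathcal P_z$ over $X_0\times J_0$, I would identify
\[
\widetilde{\mathcal J_1}=\mathbb P\big(\mathcal P\oplus p_2^*\mathcal P_z\big)=\mathbb P(\mathcal G),\qquad \mathcal G:=\big(\mathcal P\oplus p_2^*\mathcal P_z\big)\otimes p_2^*A,
\]
and correspondingly $\widetilde{\Theta_1}\cong\mathcal L\otimes\widetilde{\pi_1}^*p_2^*A\cong\mathcal O_{\mathbb P(\mathcal G)}(1)$, using that twisting a bundle $E$ by a line bundle $M$ changes the tautological quotient bundle on $\mathbb P(E)$ by $\rho^*M$. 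By the standard characterisation of $S$-ample vector bundles, $\mathcal O_{\mathbb P(\mathcal G)}(1)$ is ample relative to $\widetilde{\mathfrak f_1}\colon\widetilde{\mathcal J_1}\to X_0$ if and only if $\mathcal G$ is ample relative to $p_1\colon X_0\times J_0\to X_0$; and since $p_1$ is projective with $X_0$ Noetherian, this may be tested on the fibres of $p_1$, so it is enough to check that $\mathcal G|_{\{x\}\times J_0}$ is an ample vector bundle on $J_0$ for every $x\in X_0$. Writing $\mathcal P_x:=\mathcal P|_{\{x\}\times J_0}$, one has $\mathcal G|_{\{x\}\times J_0}=(\mathcal P_x\otimes A)\oplus(\mathcal P_z\otimes A)$, a direct sum of two line bundles each numerically equivalent to the ample bundle $A$, hence ample; and a finite direct sum of ample line bundles is an ample vector bundle. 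This proves the proposition.

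At bottom this is the fibre-by-fibre version of Proposition \ref{Ample1}(4): for $x\neq z$ the fibre $\widetilde{\mathfrak f_1}^{-1}(x)$ is exactly the normalised compactified Jacobian $\widetilde J_{X(x)}=\mathbb P(\mathcal P_x\oplus\mathcal P_z)$, and the restriction of $\widetilde{\Theta_1}$ there is precisely the ample line bundle supplied by Proposition \ref{Ample1}(4) for $X(x)$; the virtue of the vector-bundle reformulation is merely that it also absorbs the degenerate fibre $x=z$, where $\mathbb P(\mathcal P_z\oplus\mathcal P_z)\cong\mathbb P^1\times J_0$, into the same computation, so that no separate limiting argument is needed. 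I do not expect a genuine obstacle; the only step requiring care is the bookkeeping — pinning down the ``quotient'' convention for $\mathbb P(-)$ so that the twist $\mathcal O_{\mathbb P(E\otimes M)}(1)=\mathcal O_{\mathbb P(E)}(1)\otimes\rho^*M$ carries the correct sign, and invoking invariance of ampleness under numerical equivalence together with the fibrewise criterion for relative ampleness and the equivalence between ampleness of $\mathcal O_{\mathbb P(\mathcal G)}(1)$ and of $\mathcal G$ over the base.
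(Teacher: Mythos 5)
Your argument is correct and is essentially the paper's own proof in a slightly reorganised form: both reduce relative ampleness to ampleness on each fibre $\mathbb P(\mathcal P_x\oplus\mathcal P_z)$, identify the restriction of $\widetilde{\Theta_1}$ there with $\mathcal O(1)$ of the projectivisation of a direct sum of two ample line bundles (via Hartshorne's criterion \cite[Theorem 3.2]{6}), and discard the numerically trivial $\mathcal P_{p_0}$-factor. The only cosmetic difference is that you absorb the $\Theta_0$-twist into the defining bundle over $X_0\times J_0$ before restricting, while the paper restricts first and then pushes forward to $J_0$.
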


\

\begin{proof}
Since $\widetilde{\mathfrak f_1}$ is projective, it is enough to show that the restriction of $\widetilde{\Theta_1}$ to the fiber over every point $x\in X_0$ is ample.
For any $x\in X_0$, $\widetilde{\mathfrak f_1}^{-1}(x)=\mathbb P(\mathcal P_x\oplus \mathcal P_z)$ \eqref{define12}. Let us denote it by $\mathbb P(x)$. Then the restriction of $\widetilde{\Theta_1}$ to $\mathbb P(x)$ is isomorphic to

\begin{align*}
\text{Det}~(\widetilde{q}^*\mathcal P)|_{\mathbb P(x)} \otimes \mathcal L|_{\mathbb P(x)}\otimes (\widetilde{q}^*\mathcal P)|_{p_0 \times \mathbb P(x)}^{\otimes {-(g-1)}}\\
\cong (\text{Det}~(\widetilde{q}^*\mathcal P)|_{\mathbb P(x)} \otimes (\widetilde{q}^*\mathcal P)|_{{p_0} \times {\mathbb P(x)}}^{\otimes {-(g-2)}})\otimes \mathcal L|_{\mathbb P(x)}\otimes \widetilde{q}^*\mathcal P|_{{p_0} \times {\mathbb P(x)}}^{\otimes {-1}}\\
\cong \tilde p_{2,x}^*\Theta_0(x)\otimes \mathcal O_{\mathbb P(x)}(1)\otimes \widetilde{q}^*\mathcal P|_{{p_0} \times {\mathbb P(x)}}^{\otimes {-1}},
\end{align*}

\begin{equation}\label{theta123}
\end{equation}

where 
\begin{enumerate}
\item $\tilde p_{2,x}: \mathbb P(x)\rightarrow J_0$ denotes the projection map,
\item $\Theta_0(x)$ denotes the theta bundle (proposition \ref{Ample1}) over $\mathbb P(x)$,
\item $\mathcal O_{\mathbb P(x)}(1)$ is the tautological bundle on the projective bundle of $\mathbb P(\mathcal P_x\oplus \mathcal P_z)$.
\end{enumerate}

It can be easily seen that $\widetilde{q}^*\mathcal P|_{X_0\times \mathbb P(x)}$ is isomorphic to the pull-back of the Poincar\'e bundle $\mathcal P$ by the map $Id\times \widetilde{p_{2,x}}:X_0\times \mathbb P(x)\rightarrow X_0\times J_0$ which is identity on the first factor and $\widetilde{p_{2,x}}$ on $\mathbb P(x)$. Notice that

\begin{align*}
E(x):=(\widetilde{p_{2,x}})_*({\widetilde{p_{2,x}}}^*\Theta_0(x)\otimes \mathcal O_{\mathbb P(x)}(1))\\
\cong \Theta_0(x)\otimes ({\widetilde{p_{2,x}}})_*\mathcal O_{\mathbb P(x)}(1)\\
\cong \Theta_0(x)\otimes (\mathcal P_x\oplus \mathcal P_z)\\\cong (\Theta_0\otimes \mathcal P_x)\oplus (\Theta_0\otimes \mathcal P_z).
\end{align*}

\

Since each of the direct summands is an ample line bundle, the vector bundle $E(x)$ is ample. Then by \cite[Theorem 3.2]{6}, it follows that $\OO_{\mathbb{P}(E(x))}(1)$ is ample line bundle over $\mathbb P(E(x))$, which is isomorphic to $\mathbb P(\Theta_0(x)\otimes \mathcal P_x\oplus \Theta_0(x)\otimes \mathcal P_z)\cong \mathbb P(\mathcal P_x\oplus \mathcal P_z)$. Therefore the line bundle ${\widetilde{p_{2,x}}}^*\Theta_0(x)\otimes \mathcal O_{\mathbb P(x)}(1)$ is ample over $\mathbb P(x)$ for any $x\in X_0$. Now notice that the line bundle $(\widetilde{q}^*\mathcal P)|_{{p_0} \times {\mathbb P(x)}}$ in \eqref{theta123} is isomorphic to the pullback of the line bundle $ \mathcal P_{p_0}$ by the map $\mathbb P(x)\rightarrow J_{X_0}$ and hence it is algebraically equivalent to the trivial line bundle. Therefore $\widetilde{\Theta_1}|_{\mathbb P(x)}$ is ample for any $x\in X_0$.
\end{proof}

\

\subsection{\textbf{Descent of the line bundle $\widetilde{\Theta_1}$}}
In the previous subsection, we have shown that the line bundle $\widetilde{\Theta_1}$ is ample relative to $\widetilde{\mathcal J_1}\to X_0$. In this subsection, we investigate whether the line bundle $\widetilde{\Theta_1}$ descends to the quotient $\widetilde{\mathcal J_1}\to \mathcal J_1$. But it turns out that the line bundle only descends when we restrict the family $\mathcal J_1\to X_0$ to the following smaller base instead of $X_0$.  

\

Consider the open subset $B^o_1:=X_0\setminus \{p_0\}$. We take the base change of $\widetilde{\mathcal J_1}$ over the open set $B^o_1$. By abuse of notation, we also denote it by $\widetilde{\mathcal J_1}$. From here onwards we will similarly base change everything on $B^o_1$ and denote them by the same notations.

\

\begin{thm} \label{Descent} \label{D2024}
\begin{enumerate}
\item The morphism $\mathfrak f_1:\mathcal J_1\rightarrow B^o_1$ is projective.
\item The fibers of the morphism have the following description.
\begin{equation}
\mathfrak f_1^{-1}(x)\cong \left\{
\begin{array}{@{}ll@{}}
&\overline{J}_{_{X(x)}}\hspace{20pt}\text{for }x\neq z\\
&J_0\times R \hspace{10pt}\text{for }x=z
\end{array}\right\}
\end{equation}
\end{enumerate}

Here $X(x)$ denotes the curve obtained as a quotient of $X_0$ under the identification $x\sim z$.
\end{thm}

\

\begin{proof}
Notice that
\begin{equation}
\widetilde{\Theta_1}|_{\mathcal D_1}\cong \text{Det}~\widetilde{q}^*\mathcal P\otimes (\widetilde{q}^* \mathcal P)|_{\widetilde{r}^{-1}(\Delta)\cap (X_0\times \mathcal D_1)}
\otimes (\widetilde{q}^*\mathcal P)|^{\otimes -{(g-1)}}_{_{p_0\times \mathcal D_1}},
\end{equation}

\begin{equation}
\widetilde{\Theta_1}|_{\mathcal D'_1}\cong \text{Det}~ \widetilde{q}^*\mathcal P\otimes (\widetilde{q}^*\mathcal P)|_{_{z\times \mathcal D'_1}}\otimes (\widetilde{q}^*\mathcal P)|_{_{p_0\times \mathcal D'_1}}^{\otimes -{(g-1)}},
\end{equation}

Also 
\begin{equation}\label{sim1234}
(\widetilde{q}^* \mathcal P)|_{\widetilde{r}^{-1}(\Delta)\cap (X_0\times \mathcal D_1)}\cong (\widetilde{\pi_1})^* \mathcal P,
\end{equation}
where $\widetilde{\pi_1}: \mathcal D_1\rightarrow X_0\times J_0$ is the projection.

Therefore 
$$
\widetilde{\Theta_1}|_{\mathcal D_1}\cong \text{Det}~\widetilde{q}^*\mathcal P\otimes (\widetilde{\pi_1})^* \mathcal P\otimes (\widetilde{p_2}^*\mathcal P_{p_0})^{\otimes -{(g-1)}}, ~\text{and} 
$$
\begin{equation}\label{res123}
\widetilde{\Theta_1}|_{\mathcal D'_1}\cong \text{Det}~\widetilde{q}^*\mathcal P\otimes (\widetilde{p_2}^*\mathcal P_{z})\otimes (\widetilde{p_2}^*\mathcal P_{p_0})^{\otimes -{(g-1)}}.
\end{equation}
Then  
\begin{align*}
\tau^*(\widetilde{\Theta_1}|_{\mathcal D'_1})&\cong \tau^*(\text{Det}~\widetilde{q}^*\mathcal P\otimes (\widetilde{p_2}^*\mathcal P_{z})\otimes (\widetilde{p_2}^*\mathcal P_{p_0})^{\otimes -{(g-1)}}), ~~\eqref{res123}\\
&\cong \tau^*(\text{Det}~ \widetilde{q}^*\mathcal P)\otimes (\widetilde{p_2}^*\mathcal P_{z}\otimes \widetilde{p_1}^*\mathcal O_{X_0}(-z))\otimes ((\widetilde{p_2}^*\mathcal P_{p_0})^{\otimes -{(g-1)}}\otimes \widetilde{p_1}^*\mathcal O_{X_0}(-p_0)^{\otimes-(g-1)}),\\
& (\text{using}~(2)~\text{and}~(3)~\text{in}~\text{remark}~ \eqref{rules})\\
& \cong (\text{Det}~ \mathcal P')\otimes (\widetilde{p_2}^*\mathcal P_{z}\otimes \widetilde{p_1}^*\mathcal O_{X_0}(-z))\otimes ((\widetilde{p_2}^*\mathcal P_{p_0})^{\otimes -{(g-1)}}\otimes \widetilde{p_1}^*\mathcal O_{X_0}(-p_0)^{\otimes-(g-1)}),\\
& (\text{using}~(4)~\text{in remark}~ \eqref{rules})\\
&\cong (\text{Det}~(\widetilde{q}^*\mathcal P)\otimes (\widetilde{q}^*\mathcal P)|_{\widetilde{r}^{-1}(\Delta)}\otimes \widetilde{p_2}^*\mathcal P_{z}^{-1}\otimes \widetilde{p_2}^*\mathcal O_{X_0}(z))\otimes (\widetilde{p_2}^*\mathcal P_{z}\otimes \widetilde{p_1}^*\mathcal O_{X_0}(-z))\\
&\otimes (\widetilde{p_2}^*\mathcal P_{p_0})^{\otimes -{(g-1)}}\otimes \widetilde{p_1}^*\mathcal O_{X_0}(-p_0)^{\otimes-(g-1)},~~(\text{using}~(4) \text{in remark}~\eqref{rules})
\\
&\cong \text{Det}~\widetilde{q}^*\mathcal P\otimes \widetilde{\pi_1}^* \mathcal P\otimes (\widetilde{p_2}^*\mathcal P_{p_0})^{\otimes -{(g-1)}}\otimes \widetilde{p_1}^*\mathcal O_{X_0}(-p_0)^{\otimes-(g-1)},~~\eqref{sim1234}\\
& \cong \widetilde{\Theta_1}|_{\mathcal D_1} \otimes \widetilde{p_1}^*\mathcal O_{X_0}(-p_0)^{\otimes-(g-1)}~~\eqref{res123}
\end{align*}

\

Therefore, over $B^o_1:=X_0\setminus \{p_0\}$, we have 
\begin{equation}
\tau^*(\widetilde{\Theta_1}|_{\mathcal D'_1})\cong \widetilde{\Theta_1}|_{\mathcal D_1}
\end{equation}
Since $\mathcal D_1\cap \mathcal D'_1=\emptyset$, it follows that the line bundle $\widetilde{\Theta_1}$ descends to the algebraic space $\mathcal J_1$. Since $\widetilde{\Theta_1}$ is ample relative to $\widetilde{\mathcal J_1}\rightarrow B^o_1$ and $\widetilde{\mathcal J_1}\rightarrow \mathcal J_1$ is a finite map, the map $\mathcal J_1\rightarrow B^o_1$ is projective. This proves $(1)$.

\

Now let us describe the fibers. For $x\neq z$, the fiber $\widetilde{\mathfrak f_1}^{-1}(x)=\mathbb P(\mathcal P_x\oplus \mathcal P_z)$ contains following two disjoint divisors
\begin{enumerate}
\item $\mathcal D_{1,x}:=$ fiber of $\widetilde{\mathfrak f_1}:\mathcal D_1\rightarrow X_0$  over $x$. 
\item $\mathcal D'_{1,x}:=$ fiber of $\widetilde{\mathfrak f_1}:\mathcal D'_1\rightarrow X_0$  over $x$. 
\end{enumerate}
The restriction of the isomorphism $\tau: \mathcal D_1\rightarrow \mathcal D'_1$ induces an isomorphism $\tau_x: \mathcal D_{1,x}\rightarrow \mathcal D'_{1, x}$. The fiber $\mathcal J_{1,x}$ of the morphism $\mathcal J_1\rightarrow X_0$ at the point $x$ is the quotient of $\mathbb P(\mathcal P_x\oplus \mathcal P_z)$ by the identification $\tau_x$. Using \eqref{Induce1}, it is not difficult to see that there is a family of rank $1$ torsion-free sheaves of degree $0$ over $X(x)$ parametrized by $\mathcal J_{1,x}$. In other words, $\mathfrak f_1^{-1}(x)\cong \overline{J}_{_{X(x)}}$.

For $x=z$, $\widetilde{\mathfrak f_1}^{-1}(z)=\mathbb P(\mathcal P_z\oplus \mathcal P_z)\cong J_0\times \mathbb P^1$. It has two disjoint sections $\mathcal D_{1,z}$ and $\mathcal D'_{1,z}$ which are the fibers of $\widetilde{\mathfrak f_1}: \mathcal D_1\rightarrow X_0$ and $\widetilde{\mathfrak f_1}: \mathcal D'_1\rightarrow X_0$ over the point $z$. The restriction of the isomorphism $\tau$ induces an isomorphism $\mathcal D_{1,z}\rightarrow \mathcal D'_{1,z}$ which maps $L\mapsto L\otimes O_{X_0}(z-z)=L$. Therefore the fiber
$\mathfrak f_1^{-1}(z)\cong J_0\times R$, where $R$ is the rational nodal curve constructed by identifying the two points of the projective line $\mathbb P(\mathbb C\oplus \mathbb C)$ given by the two natural one-dimensional quotients of $\mathbb C\oplus \mathbb C$.
\end{proof}

\

\section{\textbf{Specialization of the compactified Jacobian of an irreducible nodal curve with $k(> 1)$ nodes}}

\

Let us begin by choosing a point $(z_1,\dots,z_k)\in X_0^k$ such that the coordinates $z_i$'s are pairwise distinct. Generalizing the construction in the previous section, we will now construct $\mathcal{J}_k$, a flat family over an open set of $X_0^k$ containing $(z_1,\dots, z_k)$ such that the fiber over a "general" point $(x_1,\dots, x_k)\in X_k$ is isomorphic to $\overline{J}_{_{X(x_1,\dots,x_k)}}$, the compactified Jacobian of the nodal curve $X(x_1,\dots,x_k)$ (see Table \ref{Not1}: Notation and Convention) and the fiber over $(z_1,\dots, z_k)$ is isomorphic to $J_0\times \underbrace{R\times \dots \times R}_{_{k~~\text{times}}}$.

\

\subsection{\textbf{Construction of the total space}}\label{sect 5.1}

\

Let $p_{i,k+1}: X_0^k\times J_0\rightarrow X_0\times J_0$ denote the projection to the product of $i$-th copy of $X_0$ and $J_0$ and
$p_{k+1}:X_0^k\times J_0\rightarrow J_0$ denote the projection to $J_0$.

\

Let $\mathcal P$ be a Poincar\'e bundle over $X_0\times J_0$. For every integer $1\leq i\leq k$, we define a projective bundle 
\[
\mathbb P_i:=\mathbb P(p_{i,k+1}^*\mathcal P\oplus p_{k+1}^* \mathcal P_{z_i})
\]
over $X_0^k\times J_0$. We define
\begin{equation}\label{Def101}
\widetilde{\mathcal J}_k:=\mathbb P_1\times_{_{(X_0^k\times J_{_0})}}\cdots \times_{_{(X_0^k\times J_{_0})}} \mathbb P_k,
\end{equation}

\

\begin{rema}
The variety $\widetilde{\mathcal J_k}$ parametrises tuples $(x_1,\dots, x_k, M, M_{x_i}\oplus M_{z_i}\rightarrow L_i)$, where
\begin{enumerate}
\item $x_1,\dots, x_k$ are points of $X_0$,
\item $M$ is a line bundle of degree $0$ over $X_0$, 
\item $M_{x_i}\oplus M_{z_i}\rightarrow L_i$ is a $1$-dimensional quotient for every $i=1,\dots, k$.
\end{enumerate}
\end{rema}

\

\subsection{\textbf{$k$-pairs of natural divisors on $\widetilde{\mathcal J_k}$}} \label{pairdiv}

\

For each $i\geq1$, we define the following pair of divisors on $\widetilde{\mathcal J_k}$

\begin{equation}\label{Def102}
\mathcal D_i:=\mathbb P_1\times_{(X_0^k\times J_{0})} \cdots \times_{(X_0^k\times J_{0})} \mathbb P_{i-1}\times_{(X_0^k\times J_{0})} \mathbb P(p_{i,k+1}^*\mathcal P)\times_{(X_0^k\times J_{0})} \mathbb P_{i+1}\times_{(X_0^k\times J_{0})} \cdots \times_{(X_0^k\times J_{0})} \mathbb P_k,
\end{equation}
and
\begin{equation}\label{Def103}
\mathcal D'_i:=\mathbb P_1\times_{(X_0^k\times J_{0})} \cdots \times_{(X_0^k\times J_{0})} \mathbb P_{i-1}\times_{(X_0^k\times J_{0})} \mathbb P(p_{k+1}^*\mathcal P_{z_i})\times_{(X_0^k\times J_0)} \mathbb P_{i+1}\times_{(X_0^k\times J_{0})} \cdots \times_{(X_0^k\times J_{0})} \mathbb P_k,
\end{equation}

\

\begin{lema}\label{Dis2}
$\mathcal D_i\cap \mathcal D'_i=\emptyset$ for all $1 \leq i\leq k$.
\end{lema}
\begin{proof}
The two natural sections of $\mathbb P(p_{i,k+1}^*\mathcal P \oplus p_{k+1}^*\mathcal P_{z_i})$ given by the two natural quotients $p_{i,k+1}^*\mathcal P \oplus p_{k+1}^*\mathcal P_{z_i}\rightarrow p_{i,k+1}^*\mathcal P$ and $p_{i,k+1}^*\mathcal P \oplus p_{k+1}^*\mathcal P_{z_i}\rightarrow p_{k+1}^*\mathcal P_{z_i}$ are distinct at every point of $X_0$ (similarly, as in the proof of Lemma \ref{Dis1}), therefore $\mathcal D_i\cap\mathcal D'_i=\emptyset$.
\end{proof}

\

\begin{rema}\label{uni2}
The variety $\mathcal D_i$ parametrises tuples $(x_1,\dots, x_k, M, {\{M_{x_j}\oplus M_{z_j}\rightarrow L_j\}}^k_{j=1})$, where $L_i:=M_{x_i}$ and $L_j$ is any quotient for $j\neq i$. The variety $\mathcal D'_i$ parametrises tuples $(x_1,\dots, x_k, M, {\{M_{x_j}\oplus M_{z_j}\rightarrow L_j\}}^k_{j=1})$, where
$L_i=M_{z_i}$ and $L_j$ is any quotient for $j\neq i$. We see that the varieties $\widetilde{\mathcal J_k}, \mathcal D_i$, and $\mathcal D'_i$ have universal properties because they parametrize these tuples. 
\end{rema}

\

\subsection{\textbf{Twisted isomorphisms $\tau_i: \mathcal D_i\rightarrow \mathcal D'_i$ for $i=1,\dots, k$}}

\

First, let us define a new base which is the following open subset of $X_0^k$.

\

\begin{equation}
B_k:=X_0^{k}\setminus\bigcup_{1\leq i,j\leq k} \big(\Delta_{i,j}\cup \Psi_{i,j} \big), 
\end{equation}
where 
$$\Delta_{i,j}:=\{(x_1,x_2,\dots, x_k):x_i\in X_0\text{ and } x_i=x_j\},$$
$$\Psi_{i,j}:=\{(x_1,x_2,\dots, x_k):x_i\in X_0\text{ and } x_i=z_j\}.$$

\

Let us restrict $\mathcal{J}_k$ over $B_k$. By abuse of notation we will denote $\mathcal{J}_k|_{B_k}$ by $\mathcal{J}_k$. Since the $k$-pairs of irreducible smooth divisors $\{\mathcal D_i, \mathcal D'_i\}_{_{i=1}}^{^{k}}$ on $\widetilde{\mathcal J_k}$  are flat over $X_0^k$, they are also flat over $B_k$. From here onwards, we work over $B_k$ and with the base change of the families. Again, for simplicity of notation, we will denote these base-changed families by the same notations. 

\
For every $i=1, \dots, k$, the twisted isomorphism $\tau_i: \mathcal D_i\to \mathcal D'_i$ is produced using a new Poincar\'e bundle $\mathcal P'_i$ which is constructed by tensoring the old Poincar\'e line bundle $\mathcal P$ over $X_0\times \widetilde{\mathcal J_k}$ by some suitable line bundle. The precise definition is as follows.

\begin{defe}\textbf{(A modified Poincar\'e bundle)}\label{NewPoincare} Consider the following line bundle over $X_0\times \widetilde{\mathcal J_k}$ :

\

\begin{equation}
\mathcal P'_i:= \widetilde{q}^*\mathcal P\otimes \widetilde{r_i}^*\mathcal O(-\Delta) \otimes \widetilde{s}^*\mathcal O(z_i)\end{equation}

\

where the maps $\widetilde{q}, \widetilde{r_i}, \widetilde{s}$ for $i=1, \dots, k$ are composites of projection morphisms described as follows.

\begin{equation}\label{maneref}
\begin{tikzcd}
& X_0\times \widetilde{\mathcal J_k}\arrow{d}{Id\times \widetilde{\pi_k}}\\
& X_0\times X_0^k\times J_0\arrow{dl}{q}\arrow{dd}{r_i}\arrow{dr}{s_i}\\
X_0\times J_0 && X_0 \\
&X_0\times X_0
\end{tikzcd}
\end{equation}

\begin{enumerate}
\item $(x, x_1, \dots, x_k, L)\overset{q}{\mapsto} (x, L),\hspace{3pt} (x, x_1, \dots, x_k, L)\overset{r_i}{\mapsto} (x, x_i)$,
\item $(x, x_1, \dots, x_k, L)\overset{s_i}{\mapsto} x_i$, 
\item  $(x, x_1, \dots, x_k, L)\overset{s}{\mapsto} x$,
\item $\widetilde{\pi_k}: \widetilde{\mathcal J_k}\rightarrow X_0^k\times J_0$ denotes the natural projection map,
\item $\widetilde q:=q\circ
(I\times \widetilde{\pi_k})$, $\widetilde{r_i}:=r_i\circ (I\times \widetilde{\pi_k})$, $\widetilde{s_i}:=s_i\circ (I\times \widetilde{\pi_k})$ and $\widetilde{s}:=s\circ (I\times \widetilde{\pi_k})$. 

\end{enumerate}
\end{defe}
\

\begin{lema}
\begin{enumerate}
\item $\widetilde{r_j}^{-1}(\Delta)\cap (X_0\times \widetilde{\mathcal J_k})\cong \widetilde{\mathcal J_k}$ for all $i,j$.

\item $\widetilde{r_j}^{-1}(\Delta)\cap (X_0\times \mathcal D_i)\cong \mathcal D_i$ for all $i,j$.
\end{enumerate}
\end{lema}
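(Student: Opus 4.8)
The plan is to imitate the proof of Lemma \ref{iden121} almost verbatim; the only genuinely new input is the description of $r_j^{-1}(\Delta)$ as a subscheme of $X_0\times X_0^k\times J_0$. First I would record that, with $\Delta\subset X_0\times X_0$ the diagonal and $r_j\colon X_0\times X_0^k\times J_0\to X_0\times X_0$ the projection $(x,x_1,\dots,x_k,L)\mapsto(x,x_j)$, the closed subscheme $r_j^{-1}(\Delta)$ is the graph locus $\{x=x_j\}$. Consequently the projection that forgets the leading $X_0$-factor restricts to an isomorphism $r_j^{-1}(\Delta)\xrightarrow{\sim}X_0^k\times J_0$, with inverse $(x_1,\dots,x_k,L)\mapsto(x_j,x_1,\dots,x_k,L)$. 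Note this depends only on $j$ and leaves the $J_0$-coordinate and the remaining $X_0$-coordinates untouched, which is exactly the feature needed for the later comparison of $\widetilde{q}^{*}\mathcal P$ with $\mathcal P'_i$ on the divisors.

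For part (1), consider the diagram
\begin{center}
\begin{tikzcd}
\widetilde{\mathcal J}_k \arrow{d} & X_0\times\widetilde{\mathcal J}_k \arrow{l}\arrow{d}{I\times\widetilde{\pi}_k} & \widetilde{r_j}^{-1}(\Delta)\arrow{l}\arrow{d}\\
X_0^k\times J_0 & X_0\times X_0^k\times J_0 \arrow{l} & r_j^{-1}(\Delta)\arrow{l}
\end{tikzcd}
\end{center}
in which both squares are Cartesian (the right one by the definition $\widetilde{r_j}^{-1}(\Delta)=(I\times\widetilde{\pi}_k)^{-1}(r_j^{-1}(\Delta))$, the left one being a base change of the projection $X_0\times X_0^k\times J_0\to X_0^k\times J_0$ along $\widetilde{\pi}_k$), hence so is the outer rectangle. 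Therefore $\widetilde{r_j}^{-1}(\Delta)\to\widetilde{\mathcal J}_k$ is the base change along $\widetilde{\pi}_k$ of the isomorphism $r_j^{-1}(\Delta)\to X_0^k\times J_0$ of the previous paragraph, and is therefore itself an isomorphism. Since $\widetilde{r_j}^{-1}(\Delta)$ already lies inside $X_0\times\widetilde{\mathcal J}_k$, its intersection with $X_0\times\widetilde{\mathcal J}_k$ is all of it; this proves (1), for every $i$ and $j$.

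For part (2), let $\iota_i\colon\mathcal D_i\hookrightarrow\widetilde{\mathcal J}_k$ be the closed immersion of the divisor \eqref{Def102}. Base changing the isomorphism $\widetilde{r_j}^{-1}(\Delta)\xrightarrow{\sim}\widetilde{\mathcal J}_k$ along $\iota_i$ — equivalently, intersecting $\widetilde{r_j}^{-1}(\Delta)$ with the closed subscheme $X_0\times\mathcal D_i\subset X_0\times\widetilde{\mathcal J}_k$ — again produces a Cartesian square, and the base change of an isomorphism is an isomorphism, so $\widetilde{r_j}^{-1}(\Delta)\cap(X_0\times\mathcal D_i)\xrightarrow{\sim}\mathcal D_i$ for all $i,j$. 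Running the identical argument with $\mathcal D'_i$ from \eqref{Def103} in place of $\mathcal D_i$ gives the companion statement $\widetilde{r_j}^{-1}(\Delta)\cap(X_0\times\mathcal D'_i)\cong\mathcal D'_i$, which is what will actually be used when defining the twisted isomorphisms $\tau_i$.

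Honestly there is no hard step here: the whole content is the bookkeeping of projections, and the one point worth a moment's care is that $\Delta$ sits inside the product of the leading $X_0$ with the $j$-th copy of $X_0$, so the identification of $r_j^{-1}(\Delta)$ does not disturb any coordinate relevant to the other factors $\mathbb P_{i'}$ with $i'\neq j$. If one wants to be scrupulous, one should add that all of this takes place after base change to the open set $B_k\subset X_0^k$; since every map in sight is obtained by base change, restriction to $B_k$ commutes with the fibre products above and the displayed isomorphisms persist over $B_k$.
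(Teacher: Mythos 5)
Your proof is correct and follows essentially the same route as the paper, which simply cites the argument of Lemma \ref{iden121}: identify $r_j^{-1}(\Delta)$ as the graph locus $\{x=x_j\}$, observe that forgetting the leading $X_0$-factor gives an isomorphism onto $X_0^k\times J_0$, and conclude via the composite of the two Cartesian squares, with part (2) obtained by base change along the closed immersion of $\mathcal D_i$. Your write-up is in fact more detailed than the paper's one-line reference, but the underlying argument is identical.
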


\

\begin{proof}
Similar to the proof of Lemma \ref{iden121}.
\end{proof}

\

Remember from \eqref{Def101} that $\widetilde{\mathcal J}_k:=\mathbb P_1\times_{_{(X_0^k\times J_{_0})}}\cdots \times_{_{(X_0^k\times J_{_0})}} \mathbb P_k$ and that every $\mathbb P_i$ is a projective bundle over $X_0^k\times J_{_0}$. Therefore, by definition, $\mathbb P_i$ has a universal quotient line bundle for every $i=1, \dots, k$. We denote by $\mathcal L_i$ the pullback of the universal quotient line bundle on $\mathbb P_i$ under the projection map $\widetilde{\mathcal J}_k\to \mathbb P_i$ for every $i=1, \dots,  k$.

\

\begin{lema}\label{LemaInvo}
\begin{enumerate}
\item There are $k$ isomorphisms $\tau_i:\mathcal D_i\rightarrow \mathcal D'_i$ for $i=1,\cdots, k$, which can be described as follows:

\begin{equation}\label{Def114}
\tau_i:\mathcal D_i\rightarrow \mathcal D'_i
\left(L, Q_1,\dots, Q_k\right)\mapsto \left(L':=L\otimes \mathcal O(z_i-x_i), Q'_1,\dots, Q'_k\right),
\end{equation}
where $Q'_t:=Q_t$ for $t\neq i$, $Q_{i}$ is the first projection $L_{x_i}\oplus L_{z_i}\rightarrow L_{x_i}$ and $Q'_i$ is the second projection $L'_{x_i}\oplus L'_{z_i}\rightarrow L'_{z_i}$.

\item these automorphisms are compatible i.e., the following diagrams commute
$$
\begin{tikzcd}
\mathcal D_i\cap \mathcal D_j\arrow{r}{\tau_i}\arrow{d}{\tau_j} & \mathcal D'_i\cap \mathcal D_j\arrow{d}{\tau_j}\\
\mathcal D_i\cap \mathcal D'_j\arrow{r}{\tau_i} & \mathcal D'_i\cap \mathcal D'_j
\end{tikzcd}
$$
for every $i\neq j$, and $i,j\in \{1, \dots, k\}$.
\end{enumerate}
\end{lema}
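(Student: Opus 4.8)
The plan is to construct each $\tau_i$ by the same mechanism used in the single node case (Proposition~\ref{Involution1}): produce the required universal data over $\mathcal D_i$ from the modified Poincar\'e bundle $\mathcal P'_i$, and then invoke the universal property of $\mathcal D'_i$ recorded in Remark~\ref{uni2}. Concretely, working over $B_k$: on $\mathcal D_i$ the $i$-th universal quotient is the first projection, and the restriction $\mathcal P'_i|_{\mathcal D_i}$ is a family of degree $0$ line bundles over $X_0$, parametrised by $\mathcal D_i$, with fibre $L\otimes\mathcal O_{X_0}(z_i-x_i)$ over the point corresponding to $(x_1,\dots,x_k,L,\{Q_j\}_j)$. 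To this family I would attach $k$ one dimensional quotients of the fibres at $x_j,z_j$: for $j=i$ the second projection $(\mathcal P'_i)_{x_i}\oplus(\mathcal P'_i)_{z_i}\rightarrow(\mathcal P'_i)_{z_i}$, and for $j\neq i$ the image of $Q_j$ under the canonical identifications of the fibres of $L$ and of $L\otimes\mathcal O_{X_0}(z_i-x_i)$ at $x_j$ and at $z_j$. The universal property then gives a morphism $\tau_i:\mathcal D_i\rightarrow\mathcal D'_i$ whose effect on $\spec\mathbb C$-points is \eqref{Def114}; running the same construction with $\mathcal O_{X_0}(x_i-z_i)$ in place of $\mathcal O_{X_0}(z_i-x_i)$ gives a two-sided inverse, so $\tau_i$ is an isomorphism.

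The point that makes this work, and the reason $B_k$ is the correct base, is that the identifications used for the quotients $Q_j$ with $j\neq i$ only exist when $x_j$ and $z_j$ avoid both $x_i$ and $z_i$. On $B_k$ the loci $\Delta_{i,j},\Psi_{i,j},\Psi_{j,i}$ have been removed and the $z$'s are pairwise distinct, so $\mathcal O_{X_0}(z_i-x_i)$ is canonically trivial on a neighbourhood of $x_j$ and on a neighbourhood of $z_j$; moreover the trivialising constant cancels between $x_j$ and $z_j$, so the transported quotient is canonical, independent of the choice. This is exactly the computation already carried out in the discussion of the singular loci following \eqref{twist1012}, and I would simply cite it in family form.

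For part~(2), over $\mathcal D_i\cap\mathcal D_j$ with $i\neq j$ I would chase the square directly. Both composites $\tau_j\circ\tau_i$ and $\tau_i\circ\tau_j$ send $(L,\{Q_t\}_t)$ to $L\otimes\mathcal O_{X_0}(z_i-x_i)\otimes\mathcal O_{X_0}(z_j-x_j)$, replace $Q_i$ and $Q_j$ by the corresponding second projections, and carry each remaining $Q_t$ through the composite of the two canonical fibre identifications at $x_t,z_t$. These agree because the tensor product of line bundles is commutative, because the modification at node $i$ touches only the $i$-th quotient and the one at node $j$ only the $j$-th, and because at every other node the two trivialisations commute. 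The one interaction that must be checked is at nodes $i$ and $j$ themselves: on $B_k$ the fibres of $\mathcal O_{X_0}(z_j-x_j)$ at $x_i$ and $z_i$ are canonically trivial, so applying $\tau_j$ after $\tau_i$ carries the second projection produced at node $i$ to the second projection of the once-more-twisted bundle, which is precisely the output of $\tau_i\circ\tau_j$ at node $i$, and symmetrically at node $j$.

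I expect the only real obstacle to be the bookkeeping of these canonical fibre identifications: verifying that the trivialising scalars cancel so the construction is canonical, that the identifications at distinct nodes commute, and that every step is performed for families and not merely on $\mathbb C$-points — which is why I would phrase the whole argument through the universal properties of Remark~\ref{uni2} rather than pointwise. All of this rests on the genericity encoded in $B_k$, which is what guarantees that the relevant Hecke modifications are honestly trivial near the other marked points. The construction of $\tau_i$ itself, once this is in place, is a formal repetition of Proposition~\ref{Involution1}.
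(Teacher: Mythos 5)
Your proposal is correct and follows essentially the same route as the paper: twist by the modified Poincar\'e bundle $\mathcal P'_i=\widetilde q^*\mathcal P\otimes\widetilde{r_i}^*\mathcal O(-\Delta)\otimes\widetilde s^*\mathcal O(z_i)$, transport the quotients at the other nodes via the canonical (constant-section) trivialisation available over $B_k$, and invoke the universal property of $\mathcal D'_i$ from Remark~\ref{uni2}. Your explicit inverse via $\mathcal O_{X_0}(x_i-z_i)$ and your worked-out check of the commuting square in part~(2) are elaborations of steps the paper leaves implicit ("straightforward set-theoretic checking"), not a different argument.
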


\

\begin{proof}

The restrictions of universal quotients $\{\mathcal L_j\}^k_{j=1}$ on the divisors $\mathcal D_i$  can be expressed as the following collection of $k$ universal quotients. 

\

For $j\neq i$,
\begin{equation}
(\widetilde q^*\mathcal P)|_{\widetilde{r_j}^{-1}(\Delta)\cap (X_0\times \mathcal D_i)}\oplus (\widetilde q^*\mathcal P)|_{z_j\times \mathcal D_i}\rightarrow  \mathcal L_j|_{\mathcal D_i}    \hspace{5pt}\text{ for } j\neq i.
\end{equation}

\

For $j=i$,
\begin{equation}
(\widetilde q^*\mathcal P)|_{\widetilde{r_i}^{-1}(\Delta)\cap (X_0\times \mathcal D_i)}\oplus (\widetilde q^*\mathcal P)|_{z_i\times \mathcal D_i}\rightarrow  (\widetilde q^*\mathcal P)|_{\widetilde{r_i}^{-1}(\Delta)\cap (X_0\times \mathcal D_i)}.
\end{equation}

\

We modify these quotients on $\mathcal D_i$ (using the new Poincar\'e line bundle defined in Definition \ref{NewPoincare}) in the following way.

\

\begin{equation}\label{modi101}
\mathcal P'_i|_{\widetilde{r_j}^{-1}(\Delta)\cap (X_0\times \mathcal D_i)}\oplus \mathcal P'_i|_{z_j\times \mathcal D_i}\rightarrow  \mathcal L'_j   \hspace{5pt} \text{ where } j\neq i,
\end{equation}
and for $j=i$,
\begin{equation}\label{modi102}
\mathcal P'_i|_{\widetilde{r_i}^{-1}(\Delta)\cap (X_0\times \mathcal D_i)}\oplus \mathcal P'_i|_{z_i\times \mathcal D_i}\rightarrow  \mathcal P'_i|_{z_i\times \mathcal D_i},
\end{equation}

\

where $\mathcal L'_j$ is a quotient line bundle defined in the following way. 

\

First, let us denote by $U$ the complement of the divisors $\widetilde{r_i}^{-1}(\Delta)$ and $\widetilde{s}^{-1}(z_j)$ in  $\widetilde{\mathcal J_k}$.
The restrictions of the line bundles $\mathcal P'_i$ and $\widetilde q^*\mathcal P$ on $\mathcal{D}_i$ are naturally isomorphic. To see this notice that any constant function on $U$ defines a section in $\Gamma(U,(\widetilde{r_i}^*\mathcal O(-\Delta) \otimes \widetilde{s}^*\mathcal O(z_i))$,  the set of global sections  of the line bundle $\widetilde{r_i}^*\mathcal O(-\Delta) \otimes \widetilde{s}^*\mathcal O(z_i)$. We fix any such constant section and using it we can identify the restrictions of $\mathcal P'_i$ and $\widetilde q^*\mathcal P$ on $\mathcal D_i$. Therefore, we can define 
\begin{equation}\label{moquo}
\mathcal L'_j:= \mathcal L_j|_{\mathcal D_i}.
\end{equation}

By the universal property (remark \ref{uni2}) of $\mathcal D'_i$, the modified quotients \eqref{modi101} and \eqref{modi102} induce an isomorphism
\begin{equation}
\tau_i: \mathcal D_i\rightarrow \mathcal D'_i
\end{equation}

Since $\mathcal L'_j=\mathcal L_j|_{\mathcal D_i}$ for every $j\neq i$, we have 
\begin{equation}\label{eqa2}
\tau^*_{i}(\mathcal L_{j}|_{ \mathcal D_i '})=\mathcal L_{j}|_{\mathcal D_i}.
 \end{equation}

Therefore, it follows that there are $k$ isomorphisms $\tau_i$ which have the desired properties. The second assertion follows from straightforward checking.  
\end{proof}

\

\subsection{\textbf{Construction of the quotient space and its singularities}} 

\

In this subsection, we construct the quotient space $\mathcal{J}_k$ inductively following \ref{Quo1}. Repeated applications of Theorem \cite[Theorem 3.1]{1} and \cite[Theorem 45 (Gluing of algebraic spaces)]{K} show that $\mathcal{J}_k$ is an algebraic space.

\

Set $\mathcal J_0:=\widetilde{\mathcal J}_k$, $\mathcal D^0_i:=\mathcal D_i$ and $D'^0_i:=\mathcal D'_i$ for every $1\leq i\leq k$. After having defined $\mathcal J_{j-1}$, we define
\begin{equation}\label{DefMain}
\mathcal J_j:=\frac{\mathcal J_{j-1}}{\mathcal D^{j-1}_j\sim_{\tau_j} \mathcal D'^{j-1}_j},
\end{equation}
where $\mathcal D^{j-1}_j$ and $\mathcal D'^{j-1}_j$ are images of $\mathcal D_j$ and $\mathcal D'_j$ in $\mathcal{J}_{j-1}$.
\begin{lema}
$\mathcal D^{j-1}_j\cap \mathcal D'^{j-1}_j=\emptyset$ for every $1\leq j\leq k$.
\end{lema}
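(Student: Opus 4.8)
The plan is to prove that $\mathcal D^{j-1}_j \cap \mathcal D'^{j-1}_j = \emptyset$ by induction on $j$, using the description of the successive push-outs and the key fact that in each quotient step the locus of identification is $\mathcal V_i := \nu_i(\mathcal D_i^{i-1}) = \nu_i(\mathcal D'^{i-1}_i)$, and that the identified divisors $\mathcal D_j, \mathcal D'_j$ survive in the quotient because they meet $\mathcal D_i, \mathcal D'_i$ transversally (being fibre products of projective bundles with different factors being sections). The base case $j=1$ is Lemma \ref{Dis1} above: $\mathcal D_1 \cap \mathcal D'_1 = \emptyset$ since they are the two disjoint sections of $\mathbb P_1$ in the factor slot $1$. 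For the inductive step, I would track how $\mathcal D_j$ and $\mathcal D'_j$ behave under the quotient maps $\nu_1, \dots, \nu_{j-1}$.

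\textbf{First I would} reduce to a pointwise (i.e. $\mathbb C$-valued points, using that the relevant spaces are reduced and the push-out maps are finite and bijective away from the glued loci) statement, using the modular descriptions in Remark \ref{uni2}: a $\mathbb C$-point of $\mathcal D^{j-1}_j$ is represented by a tuple $(x_1,\dots,x_k,M,\{Q_t\}_{t=1}^k)$ (possibly after applying some of the Hecke twists from $\tau_1,\dots,\tau_{j-1}$) for which the $j$-th quotient is the \emph{first} projection $M_{x_j}\oplus M_{z_j}\to M_{x_j}$, while a $\mathbb C$-point of $\mathcal D'^{j-1}_j$ has $j$-th quotient the \emph{second} projection $\to M_{z_j}$. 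The crucial observation — and this is where the open base $B_k$ enters — is that the intermediate quotient maps $\nu_1,\dots,\nu_{j-1}$ only identify points whose underlying data differ by Hecke modifications at the pairs $\{x_i,z_i\}$ with $i<j$; since we have restricted to $B_k$ where $x_j \notin \{x_i, z_i\}$ for $i \neq j$ (this is exactly the definition of $\Delta_{i,j}$ and $\Psi_{i,j}$ being removed), the quotients $Q_j$ at the $j$-th slot are unaffected by these identifications, as spelled out in \eqref{eqa2} and the discussion around \eqref{twist1012}. Hence being in "the first-projection locus" versus "the second-projection locus" at slot $j$ is a well-defined dichotomy that is preserved by $\nu_1,\dots,\nu_{j-1}$, and the two loci remain disjoint in $\mathcal J_{j-1}$.

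\textbf{Concretely, the key steps are}: (1) show $\nu_i^{-1}(\mathcal D^{i}_j) = \mathcal D^{i-1}_j$ and similarly for $\mathcal D'$, for all $i<j$ — i.e. the divisors $\mathcal D_j, \mathcal D'_j$ are "saturated" with respect to each push-out, which follows because the gluing of $\mathcal D_i^{i-1}$ to $\mathcal D'^{i-1}_i$ via $\tau_i$ restricts to a gluing of $\mathcal D_i^{i-1}\cap \mathcal D_j$ to $\mathcal D'^{i-1}_i\cap \mathcal D_j$ (by part (2) of Lemma \ref{LemaInvo}, the compatibility square of the $\tau_i$'s), so $\mathcal D_j$ maps to a genuine subspace and its preimage is exactly $\mathcal D_j$; (2) conclude $\mathcal D^{j-1}_j \cap \mathcal D'^{j-1}_j = \nu_{j-1}(\cdots\nu_1(\mathcal D_j \cap \mathcal D'_j))$ set-theoretically, where $\mathcal D_j \cap \mathcal D'_j = \emptyset$ in $\widetilde{\mathcal J}_k$ by Lemma \ref{Dis1}; hence the intersection in $\mathcal J_{j-1}$ is empty.

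\textbf{The main obstacle} I anticipate is step (1): carefully verifying that $\nu_i^{-1}(\mathcal D^i_j) = \mathcal D^{i-1}_j$, i.e. that the push-out does not accidentally glue a point of $\mathcal D_j$ to a point outside $\mathcal D_j$, nor collapse part of $\mathcal D_j$ onto itself in a way that merges it with $\mathcal D'_j$. This requires knowing precisely which points get identified by $\tau_i$ and checking, via the explicit formula \eqref{Def114} for $\tau_i$ together with the restriction to $B_k$, that the twist $M \mapsto M\otimes \mathcal O(z_i - x_i)$ leaves the slot-$j$ quotient untouched; this is exactly the content already justified in the single-node discussion around \eqref{twist1012} (the "$Q'_j = Q_j$ for $j\neq i$" argument using constant sections over $U = X_0\setminus\{x_i,z_i\}$) and recorded in \eqref{eqa2}. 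Once that bookkeeping is in place the disjointness is immediate, so I would present the argument as a short induction citing Lemma \ref{LemaInvo}(2) and \eqref{eqa2}, with the disjointness of the two sections of $\mathbb P_j$ (Lemma \ref{Dis1}) as the base of the recursion.
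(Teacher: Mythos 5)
Your proposal is correct and rests on the same mechanism as the paper's proof: the identifications $\tau_i$ for $i<j$ leave the slot-$j$ quotient data (first versus second projection) unchanged over $B_k$, as recorded in \eqref{eqa2} and the discussion around \eqref{twist1012}, so no point of $\mathcal D_j$ is ever glued to a point of $\mathcal D'_j$ and the images stay disjoint. The paper verifies this by an explicit two-case pointwise check for $j=2$, $k=2$ and asserts the general case is similar, whereas you package the same check as an induction with an explicit saturation statement $\nu_i^{-1}(\mathcal D^i_j)=\mathcal D^{i-1}_j$; this is a presentational difference only.
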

\begin{proof}
From lemma \ref{Dis1} it follows that the statement holds for $j=1$. Let us check it for $j=2$ and for this purpose, we can assume that $k=2$. Then the configuration of the divisors $D_1, D'_1, D_2, D'_2$ is the following

\

\begin{center}
\begin{tikzpicture}
\draw[-] (0,-1) -- (6,-1) node[pos=.5,sloped,above] {$\mathcal D_1$};
\draw[-] (0,-4) -- (6,-4) node[pos=.5,sloped,below] {$\mathcal D'_1$};
\draw[-] (1,-5) -- (1, 0) node[pos=.5, below left] {$\mathcal D_2$};
\draw[-] (5,-5) -- (5, 0) node[pos=.5,below right] {$\mathcal D'_2$};
\end{tikzpicture}
\end{center}

\

Let us fix any $(x_1,x_2)\in X_0\times X_0$. It will suffice to check the following
\begin{enumerate}
\item if $(L, q_1, q_2)\in \mathcal D_1\cap \mathcal D_2$ and $(M, p_1, p_2)\in \mathcal D'_1\cap \mathcal D'_2$ then $\tau_1(L, q_1, q_2)\not \cong (M, p_1, p_2)$,
\item if $(L, q_1, q_2)\in \mathcal D_1\cap \mathcal D'_2$ and $(M, p_1, p_2)\in \mathcal D'_1\cap \mathcal D_2$ then $\tau_1(L, q_1, q_2)\not \cong (M, p_1, p_2)$.
\end{enumerate}

It is enough to check one of them because the proofs are the same. Let us check $(1)$. Notice that $q_1:L_{x_1}\oplus L_{z_1}\rightarrow L_{x_1}$ and $q_2: L_{x_2}\oplus L_{z_2}\rightarrow L_{x_2}$ both are first projections. Therefore $\tau_1(L, q_1, q_2):=(L':=L(z_1-x_1), q'_1:L'_{x_1}\oplus L'_{z_1}\rightarrow L'_{z_1}, q'_2: L'_{x_2}\oplus L'_{z_2}\rightarrow L'_{x_2})$, where $q'_1$ is the second projection and $q'_2$ is the first projection. Now notice that $p_1: M_{x_1}\oplus M_{z_1}\rightarrow M_{z_1}$ and $p_2: M_{x_2}\oplus M_{z_2}\rightarrow M_{z_2}$ are both second projections. Since $q'_2$ is the first projcetion and $p_2$ is the second projection, therefore $\tau_1(L, q_1, q_2)\not \cong (M, p_1, p_2)$.

The proof for the general $j$ is similar.
\end{proof}

\

Since the isomorphisms $\tau_i$ commute with the projection onto $B_k$ the morphism $\widetilde{\mathfrak f_k}:\widetilde{\mathcal{J}_k}\rightarrow B_k$ descends to a morphism $\mathfrak f_k:\mathcal{J}_k\rightarrow B_k$.

\

\begin{prop}\label{Sing}
The quotient space $\mathcal J_k$ is an algebraic space and has $k$-th product of normal crossing singularities.
\end{prop}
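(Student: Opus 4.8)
The plan is to treat the two assertions separately: that $\mathcal J_k$ is an algebraic space, and that each of its completed local rings is formally smooth over $\mathbb C[u_1,v_1,\dots,u_i,v_i]/(u_1v_1,\dots,u_iv_i)$ for some $i\le k$.

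For the first assertion I would simply invoke Proposition \ref{Artin} repeatedly along the chain $\widetilde{\mathcal J}_k=\mathcal J_0\to\mathcal J_1\to\cdots\to\mathcal J_k$ defined in \eqref{DefMain}: at the $j$-th step one pushes $\mathcal J_{j-1}$ out along the finite surjection $\mathcal D^{j-1}_j\sqcup\mathcal D'^{j-1}_j\to\mathcal D'^{j-1}_j$ given by $\tau_j$ and the identity (legitimate since $\mathcal D^{j-1}_j\cap\mathcal D'^{j-1}_j=\emptyset$ by the lemma just above), so by induction each $\mathcal J_j$, and in particular $\mathcal J_k$, is a Noetherian algebraic space of finite type over $B_k$. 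Combining Proposition \ref{Artin}(2) with the relative version of the argument in Proposition \ref{Acc}(2) — $\widetilde{\mathcal J}_k\to B_k$ is proper and $\nu_k$ is quasi-finite, hence finite — also shows that $\nu_k:\widetilde{\mathcal J}_k\to\mathcal J_k$ is a finite morphism, an isomorphism away from $\bigcup_i(\mathcal D_i\cup\mathcal D'_i)$.

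For the singularities I would fix a closed point $p\in\mathcal J_k$, put $S:=\{\,j:p\in\nu_k(\mathcal D_j)\,\}$ and $i:=\#S$; if $i=0$ then $\nu_k$ is a local isomorphism at $p$ and $\mathcal J_k$ is smooth there, so assume $i\ge 1$. Using $\mathcal D_j\cap\mathcal D'_j=\emptyset$ (Lemma \ref{Dis1}), $\nu_k(\mathcal D_j)=\nu_k(\mathcal D'_j)$, and the compatibility Lemma \ref{LemaInvo}(2), the fibre $\nu_k^{-1}(p)$ is indexed by $\varepsilon\in\prod_{j\in S}\{0,1\}$: the point $\tilde p_\varepsilon$ lies on $\mathcal D_j$ when $\varepsilon_j=0$ and on $\mathcal D'_j$ when $\varepsilon_j=1$ for $j\in S$, and on no $\mathcal D_l$ or $\mathcal D'_l$ with $l\notin S$. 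Since $\widetilde{\mathcal J}_k$ is a fibre product of $\mathbb P^1$-bundles over $X_0^k\times J_0$ it is smooth, and the $i$ divisors through $\tilde p_\varepsilon$ come from $i$ distinct projective-bundle factors, hence are transverse; so $\widehat{\mathcal O}_{\widetilde{\mathcal J}_k,\tilde p_\varepsilon}\cong\mathbb C[[w_1,\dots,w_{n-i},t^\varepsilon_1,\dots,t^\varepsilon_i]]$ with the $j$-th divisor cut out by $t^\varepsilon_j$. Iterating the short exact sequence \eqref{SES1001} at $p$ — one copy for each push-out in the chain — presents $\widehat{\mathcal O}_{\mathcal J_k,p}$ as the equaliser inside $\bigoplus_\varepsilon\widehat{\mathcal O}_{\widetilde{\mathcal J}_k,\tilde p_\varepsilon}$ of the gluing maps induced by the $\tau_j$, $j\in S$. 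By \eqref{eqa2} and Lemma \ref{LemaInvo}(2) each $\tau_j$ matches the coordinates in all directions $\ne j$, so the $i$ gluings decouple into independent pairs of directions and the equaliser is exactly $\mathbb C[[w_1,\dots,w_{n-i}]]\,\widehat{\otimes}\,\mathbb C[[u_1,v_1,\dots,u_i,v_i]]/(u_1v_1,\dots,u_iv_i)$, the pair $(u_j,v_j)$ being the local equations $t^\varepsilon_j$ on the two sides of the $j$-th identification. This is the claimed product of normal crossing singularity.

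It is perhaps cleanest to package the last paragraph as an induction on $\#S$ (equivalently on $k$) with base case Theorem \ref{Sing1}, forming $\mathcal J_k=\mathcal J_{k-1}/(\mathcal D^{k-1}_k\sim_{\tau_k}\mathcal D'^{k-1}_k)$ and adding one $uv=0$ factor by exactly the local computation in the proof of Theorem \ref{Sing1}. The hard part will be the bookkeeping that lets this induction go through: one must check that the image $\mathcal D^{k-1}_k$ is still, analytically locally, a single coordinate slice of the already singular space $\mathcal J_{k-1}$, i.e. $\widehat{\mathcal O}_{\mathcal J_{k-1},q}\cong\widehat{\mathcal O}_{\mathcal D^{k-1}_k,q}[[t]]$ with $\mathcal D^{k-1}_k=\{t=0\}$, including at points $q$ where $\mathcal D^{k-1}_k$ passes through the singular locus of $\mathcal J_{k-1}$. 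Via $\nu_{k-1}$ this comes down to the transversality in $\widetilde{\mathcal J}_k$ of $\mathcal D_k$ with all of $\mathcal D_1,\mathcal D'_1,\dots,\mathcal D_{k-1},\mathcal D'_{k-1}$ — true because these are pulled back from the $k$ different projective-bundle factors of \eqref{Def101} — together with the fact that $\tau_1,\dots,\tau_{k-1}$ preserve the local equation of $\mathcal D_k$, which is what \eqref{eqa2} and Lemma \ref{LemaInvo}(2) encode; pushing this through the iterated Artin construction is the only genuinely delicate point.
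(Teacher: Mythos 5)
Your proposal is correct and, in its final inductive packaging, is essentially the paper's own proof: the paper argues by induction on the push-out steps, using the exact sequence \eqref{SES1001} to present $\hat{\mathcal O}_{\mathcal J_i,v'_i}$ as the kernel of $(f,g)\mapsto f\ (\mathrm{mod}\ x_{2i-1})-g\ (\mathrm{mod}\ x_{2i})$ on $R\otimes k[|x_{2i-1}|]\oplus R\otimes k[|x_{2i}|]$, exactly as in your last paragraph (your global ``equaliser over $\{0,1\}^S$'' description is an equivalent repackaging of the same computation). The transversality and coordinate-slice bookkeeping you rightly single out as the delicate point --- that $\mathcal D'^{i-1}_i$ remains, analytically locally, a coordinate slice of the already singular $\mathcal J_{i-1}$ and that the earlier $\tau_j$ preserve its local equation --- is passed over silently in the paper, which simply writes down the identification of $\hat{\mathcal O}_{\mathcal J_{i-1},v_i}$ with $R\otimes k[|x_{2i-1}|]$ without further justification.
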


\

\begin{proof}

From Theorem \ref{Sing1}, it follows that $\mathcal J_1$ has only normal crossing singularities along the image of the divisor $\mathcal D_1$ and that it is smooth elsewhere. Consider the following commutative diagram
\begin{equation}
\begin{tikzcd}
Z:=\mathcal D^1_2\coprod \mathcal D'^1_2\arrow{r}\arrow{d}{\tau_2}& \mathcal J_1\arrow{d}\\
V:= \mathcal D'^1_2 \arrow{r}& \mathcal J_2:=\mathcal J_1/\sim
\end{tikzcd}
\end{equation}
It is enough to check the singularities of $\mathcal J_2$ along the codimension $2$ subspace $\mathcal D'^1_2\cap \mathcal D^1_1$. Let $v'_2\in \mathcal D'^1_2\cap \mathcal D^1_1$ and let $v_2$ and $v'_2$ denote the two preimages under $\tau_2$. Then we have the following

\begin{small}
\begin{equation}
\begin{tikzcd}
0\arrow{r} & \hat{\mathcal O}_{_{\mathcal J_2, v'_2}}\arrow{r}\arrow{d}{=} & \hat{\mathcal O}_{_{\mathcal J_1, v_2}}\oplus \hat{\mathcal O}_{_{\mathcal J_1, v'_2}}\arrow{r}\arrow{d}{\cong} & \hat{\mathcal O}_{_{V, v'_2}}\arrow{r}\arrow{d}{\cong}& 0\\
0\arrow{r} & \hat{\mathcal O}_{_{\mathcal J_2, v'_2}}\arrow{r}& \frac{k[|x_1, x_2|]}{x_1\cdot x_2}[|x_3,x_5,\dots,x_n|]\oplus \frac{k[|x_1, x_2|]}{x_1\cdot x_2}[|x_4,x_5,\dots,x_n|] \arrow{r} & \frac{k[|x_1, x_2|]}{x_1\cdot x_2}[|x_5,\dots,x_n|] \arrow{r} & 0
\end{tikzcd}
\end{equation}
\end{small}

The bottom-right morphism is given by
\begin{align*}
\frac{k[|x_1, x_2|]}{x_1\cdot x_2}[|x_3,x_5,\dots,x_n|]\oplus \frac{k[|x_1, x_2|]}{x_1\cdot x_2}[|x_4,x_5,\dots,x_n|] &\rightarrow \frac{k[|x_1, x_2|]}{x_1\cdot x_2}[|x_5,\dots,x_n|]\\
(f,g)&\mapsto f (\text{mod}~ x_3)-g (\text{mod}~ x_4)
\end{align*}
It follows that $\hat{\mathcal O}_{_{X_2,v'_2}}\cong \frac{k[|x_1, x_2, x_3, x_4|]}{x_1\cdot x_2, x_3\cdot x_4}[|x_5,\dots,x_n|]$.
Therefore the algebraic space $\mathcal J_2$ has the product of two normal crossing singularities along $V$.

At the $i$-th step we have
\begin{equation}
\begin{tikzcd}
Z:=\mathcal D^{i-1}_i\coprod \mathcal D'^{i-1}_i\arrow{r}\arrow{d}{\tau_i}& \mathcal J_{i-1}\arrow{d}\\
V:= \mathcal D'^{i-1}_i \arrow{r}& \mathcal J_i:=\frac{\mathcal J_{i-1}}{\sim}
\end{tikzcd}
\end{equation}

Although it is exactly a similar calculation, we will describe the singularities of $\mathcal J_i$ along the codimension $i$ subspace $\mathcal D'^{i-1}_i\cap \mathcal D^{i-1}_{i-1}\cap \dots \mathcal D^{i-1}_{1}$. Let $v'_i\in \mathcal D'^{i-1}_i\cap \mathcal D^{i-1}_{i-1}\cap \dots \mathcal D^{i-1}_{1}$ and $v_i$ and $v'_i$ are the two pre-images under $\tau_i$. Then we have the following

\begin{equation}
\begin{tikzcd}
0\arrow{r} & \hat{\mathcal O}_{_{\mathcal J_i, v'_i}}\arrow{r}\arrow{dd}{=} & \hat{\mathcal O}_{_{\mathcal J_{i-1}, v_i}}\oplus \hat{\mathcal O}_{_{\mathcal J_{i-1}, v'_i}}\arrow{r}\arrow{dd}{\cong} & \hat{\mathcal O}_{_{V, v'_i}}\arrow{r}\arrow{dd}{\cong}& 0\\ \\
0\arrow{r} & \hat{\mathcal O}_{_{\mathcal J_i, v'_i}}\arrow{r}& R\otimes k[|x_{_{2i-1}}|]\oplus R\otimes k[|x_{_{2i}}|]\arrow{r} &R \arrow{r} & 0
\end{tikzcd}
\end{equation}

where $R:=\frac{k[|x_1, x_2,\dots, x_{2i-3}, x_{2i-2}|]}{x_1\cdot x_2,\dots, x_{2i-3}\cdot x_{2i-2}}[|x_{2i+1},\dots,x_n|]$ and the morphism
\[
R\otimes k[|x_{2i}|]\oplus R\otimes k[|x_{2i+1}|] \rightarrow R
\]
is given by $(f,g)\mapsto
f (\text{mod}~ x_{2i-1})-g (\text{mod}~ x_{2i})$.

Hence $$\hat{\mathcal O}_{_{\mathcal J_i,v'_i}}\cong \frac{k[|x_1, x_2,\dots, x_{2i-1}, x_{2i}|]}{x_1\cdot x_2,\dots, x_{2i-1}\cdot x_{2i}}[|x_{2i+1},\dots,x_n|].$$
Therefore the algebraic space $\mathcal J_i$ has the product of $i$-many normal crossing singularities along $V$.
\end{proof}

\

\subsection{\textbf{Theta bundle on $\widetilde{\mathcal J_k}$ and its relative ampleness}}

\

\begin{defe}
We define a line bundle
\begin{equation}\label{Theta1103}
\widetilde{\Theta_k}:= \text{Det}~~\tilde q^*\mathcal P\otimes (\otimes_{i=1}^k \mathcal L_i)\otimes (\tilde q^*\mathcal P^{\otimes -(g-1)})|_{p_0\times \widetilde{\mathcal J_k}}.
\end{equation}
\end{defe}

\

Notice that the line bundle $\widetilde{\Theta_k}$ is isomorphic to $\widetilde{p_{k+1}}^* \text{Det}~~\mathcal P\otimes (\otimes_{i=1}^k \mathcal L_i)\otimes \widetilde{p_{k+1}}^*\mathcal P^{\otimes -(g-1)}_{_{P_0}}$, where $\widetilde{p_{k+1}}:=p_{k+1}\circ \widetilde \pi_k$ (refer to the diagram \ref{maneref} and the set of notations in the beginning of subsection \ref{sect 5.1} for notations).

\

\begin{prop}
The line bundle $\widetilde{\Theta_k}$ is relatively ample for the morphism $\widetilde{\mathfrak f_k}:\widetilde{\mathcal J_k}\rightarrow X_0^k$.
\end{prop}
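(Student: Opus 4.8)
The plan is to follow the one-node argument verbatim at the level of strategy. Since $\widetilde{\mathfrak f_k}$ is projective, it suffices to prove that the restriction of $\widetilde{\Theta_k}$ to the fibre $Y:=\widetilde{\mathfrak f_k}^{-1}(x_1,\dots,x_k)$ over every point $(x_1,\dots,x_k)\in X_0^k$ is ample. Projectivity of $\widetilde{\mathfrak f_k}$ is immediate from \eqref{Def101}: $\widetilde{\mathcal J}_k$ is a fibre product over $X_0^k\times J_0$ of the projective bundles $\mathbb P_i$, hence is projective over $X_0^k\times J_0$, and $X_0^k\times J_0\to X_0^k$ is projective.

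Next I would compute $\widetilde{\Theta_k}|_Y$. Write $\rho\colon Y=\mathbb P(\mathcal P_{x_1}\oplus\mathcal P_{z_1})\times_{J_0}\cdots\times_{J_0}\mathbb P(\mathcal P_{x_k}\oplus\mathcal P_{z_k})\to J_0$ for the structure map and $q_i\colon Y\to\mathbb P(\mathcal P_{x_i}\oplus\mathcal P_{z_i})$ for the $i$-th projection. Exactly as in the computation leading to \eqref{theta123}, one checks that $\widetilde{q}^{*}\mathcal P$ restricted to $X_0\times Y$ is the pullback of $\mathcal P$ along $I\times\rho$, so that $\text{Det}\,\widetilde{q}^{*}\mathcal P|_Y\cong\rho^{*}\text{Det}\,\mathcal P$ and $(\widetilde{q}^{*}\mathcal P)|_{p_0\times Y}\cong\rho^{*}\mathcal P_{p_0}$, while $\mathcal L_i|_Y\cong q_i^{*}\mathcal{O}_{\mathbb P(\mathcal P_{x_i}\oplus\mathcal P_{z_i})}(1)$. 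Combining this with Proposition \ref{Ample1}(1) (taking the base point there to be $p_0$), which gives $\text{Det}\,\mathcal P\cong\mathcal{O}_{J_0}(\Theta_0)\otimes\mathcal P_{p_0}^{\otimes(g-k-1)}$, one obtains
\[
\widetilde{\Theta_k}|_{Y}\;\cong\;\rho^{*}\mathcal{O}_{J_0}(\Theta_0)\otimes\Big(\bigotimes_{i=1}^{k}q_i^{*}\mathcal{O}_{\mathbb P(\mathcal P_{x_i}\oplus\mathcal P_{z_i})}(1)\Big)\otimes\rho^{*}\mathcal P_{p_0}^{\otimes(-k)} .
\]
The factor $\rho^{*}\mathcal P_{p_0}^{\otimes(-k)}$ is the pullback of an element of $\Pic^{0}(J_0)$, hence is algebraically equivalent to the trivial bundle and is irrelevant for ampleness (just as in the one-node case); so it remains to prove that $\rho^{*}\mathcal{O}_{J_0}(\Theta_0)\otimes\bigotimes_{i}q_i^{*}\mathcal{O}(1)$ is ample on $Y$.

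For this — the one point that is genuinely new for $k>1$ — I would use an iterated relative Segre embedding. Put $E_1:=\mathcal{O}_{J_0}(\Theta_0)\otimes(\mathcal P_{x_1}\oplus\mathcal P_{z_1})$ and $E_i:=\mathcal P_{x_i}\oplus\mathcal P_{z_i}$ for $2\leq i\leq k$, so that $\mathbb P(E_i)=\mathbb P(\mathcal P_{x_i}\oplus\mathcal P_{z_i})$ and $\mathcal{O}_{\mathbb P(E_1)}(1)\cong\pi_1^{*}\mathcal{O}_{J_0}(\Theta_0)\otimes\mathcal{O}_{\mathbb P(\mathcal P_{x_1}\oplus\mathcal P_{z_1})}(1)$. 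The iterated relative Segre map $Y=\mathbb P(E_1)\times_{J_0}\cdots\times_{J_0}\mathbb P(E_k)\hookrightarrow\mathbb P(E_1\otimes\cdots\otimes E_k)$ is a closed immersion pulling $\mathcal{O}(1)$ back to $\rho^{*}\mathcal{O}_{J_0}(\Theta_0)\otimes\bigotimes_{i}q_i^{*}\mathcal{O}(1)$. Now $E_1\otimes\cdots\otimes E_k=\bigoplus\big(\mathcal{O}_{J_0}(\Theta_0)\otimes\mathcal P_{a_1}\otimes\cdots\otimes\mathcal P_{a_k}\big)$, the sum over $a_i\in\{x_i,z_i\}$, is a direct sum of $2^{k}$ line bundles, each of which is $\mathcal{O}_{J_0}(\Theta_0)$ twisted by an element of $\Pic^{0}(J_0)$ and hence ample; therefore $E_1\otimes\cdots\otimes E_k$ is an ample vector bundle, $\mathcal{O}_{\mathbb P(E_1\otimes\cdots\otimes E_k)}(1)$ is ample by \cite[Theorem 3.2]{6}, and so is its pullback to $Y$. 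Equivalently, one can run an induction on $k$ reducing to the one-node case already proved: $Y$ is the $\mathbb P^{1}$-bundle $\mathbb P\big(\rho'^{*}(\mathcal P_{x_k}\oplus\mathcal P_{z_k})\big)$ over the $(k-1)$-node fibre $Y'$, and using that $\mathcal{O}_{\mathbb P(E)}(1)\otimes\pi^{*}N$ is ample iff $E\otimes N$ is ample, our bundle is ample on $Y$ iff $\rho'^{*}(\mathcal P_{x_k}\oplus\mathcal P_{z_k})\otimes\big(\rho'^{*}\mathcal{O}_{J_0}(\Theta_0)\otimes\bigotimes_{i<k}q_i'^{*}\mathcal{O}(1)\big)$ is ample on $Y'$, which holds because the two summands of $\rho'^{*}(\mathcal P_{x_k}\oplus\mathcal P_{z_k})$ are numerically trivial and the inductive hypothesis makes the other tensor factor ample.

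The step I expect to be the main obstacle is the bookkeeping in the middle paragraph — identifying $\widetilde{\Theta_k}|_Y$ with $\rho^{*}\mathcal{O}_{J_0}(\Theta_0)\otimes\bigotimes_{i}q_i^{*}\mathcal{O}(1)$ up to a $\Pic^{0}$-twist — since it requires tracking the various projections and the base-change and functoriality properties of the determinant of cohomology, in the spirit of remark \ref{rules} and \eqref{theta123} but now with $k$ factors; once the restriction is pinned down, the ampleness of $E_1\otimes\cdots\otimes E_k$ (a direct sum of ample line bundles) and the Segre embedding are routine.
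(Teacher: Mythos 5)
Your proposal is correct and follows essentially the same route as the paper: reduce to ampleness on each fibre, identify the restriction of $\widetilde{\Theta_k}$ with $\rho^{*}\mathcal O_{J_0}(\Theta_0)\otimes\bigotimes_i q_i^{*}\mathcal O(1)$ up to a $\Pic^0$-twist, and then use the Segre embedding together with the fact that the relevant tensor/direct-sum bundle splits into $2^k$ ample line bundle summands, so that \cite[Theorem 3.2]{6} applies. The inductive reformulation you sketch at the end is a harmless variant; the core argument coincides with the paper's.
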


\begin{proof}
Let $\overrightarrow{x}:=(x_1,\dots, x_k)\in B_k$ be any point. The restriction of $\widetilde{\Theta_k}$ to the fiber $\widetilde{\mathfrak f_k}^{-1}(\overrightarrow{x})$
\begin{align}
\widetilde{\Theta_k}|_{\widetilde{\mathfrak f_k}^{-1}(\overrightarrow{x})}=&\tilde{p}_{k+1, \overrightarrow{x}}^*\text{Det}~~\mathcal P\otimes \tilde{p}_{k+1, \overrightarrow{x}}^*\mathcal P_{_{P_0}}^{-\otimes (g-1)}\otimes (\otimes_{i=1}^k \mathcal L_i|_{\widetilde{\mathfrak f_k}^{-1}(\overrightarrow{x})})\\
\cong& (\tilde{p}_{k+1, \overrightarrow{x}}^*\text{Det}~~\mathcal P\otimes \tilde{p}_{k+1, \overrightarrow{x}}^*\mathcal P_{_{P_0}}^{-\otimes (g-k-1)})\otimes (\otimes_{i=1}^k  \mathcal L_i|_{\widetilde{\mathfrak f_k}^{-1}(\overrightarrow{x})})\otimes \tilde{p}_{k+1, \overrightarrow{x}}^*\mathcal P_{_{P_0}}^{k-1}
\\ \label{hello123} \cong& \tilde{p}_{k+1, \overrightarrow{x}}^*\Theta_0\otimes (\otimes_{_{i=1}}^k \mathcal O_{\mathbb P_i(\overrightarrow{x})}(1))\otimes \tilde{p}_{k+1, \overrightarrow{x}}\mathcal P_{P_0}^{k-1},
\end{align}
where 
\begin{enumerate}
\item $\mathbb P_i(\overrightarrow{x})$ denotes the projective bundle $\mathbb P(\mathcal P_{x_i}\oplus \mathcal P_{z_i})$ over $J_0$,
\item $\mathcal O_{\mathbb P_i(\overrightarrow{x})}(1)$ denotes the pullback of the tautological bundle of the projective bundle $\mathbb P_i(\overrightarrow{x})$ by the natural projection morphism $\mathbb P_1(\overrightarrow{x})\times_{J_0}\times\cdots\times_{J_0} \mathbb P_k(\overrightarrow{x})\rightarrow \mathbb P_i(\overrightarrow{x})$, and 
\item $\tilde{p}_{k+1,\overrightarrow{x}}$ denotes the natural projection $\widetilde{\mathfrak f_k}^{-1}(\overrightarrow{x})\rightarrow J_0$. 
\end{enumerate}
Notice that we have the isomorphism \eqref{hello123}  because $\mathcal O_{\mathbb P_i(\overrightarrow{x})}(1)\cong \mathcal L_i|_{\widetilde{\mathfrak{f}_k}^{-1}(\overrightarrow{x})}$ for every $i=1,\dots, k$.
Now consider the Segre-embedding
\begin{equation}
\mathbb P_1(\overrightarrow{x})\times\cdots \times \mathbb P_k(\overrightarrow{x})\hookrightarrow  \mathbb P(\otimes_{i=1}^k (\mathcal P_{x_i}\oplus \mathcal P_{z_i})
\end{equation}
Notice that

\begin{equation}
(\tilde{p}_{k+1,\overrightarrow{x}})_*(\tilde{p}_{k+1,\overrightarrow{x}}^*\Theta_0\otimes \mathcal O(1))\cong \Theta_0\otimes (\otimes_{i=1}^k (\mathcal P_{x_i}\oplus \mathcal P_{z_i}))
\end{equation}

Now every direct summand of $\Theta_0\otimes (\otimes_{i=1}^k (\mathcal P_{x_i}\oplus \mathcal P_{z_i}))$ is equal to $\Theta_0\otimes (\otimes_{i=1}^k \mathcal P_{p_i})$, for some  $p_i\in \{x_i,z_i\}$ and $i\in \{1,\dots ,k\}$. So, since every direct summand is ample, the vector bundle is ample. Also, notice that the line bundle $\mathcal P_{p_0}$ is algebraically equivalent to the trivial line bundle. Therefore the line bundle ${\tilde{p}_{k+1,\overrightarrow{x}}}^*\Theta_0\otimes \mathcal O(1)$ on $\mathbb P(\otimes_{i=1}^k (\mathcal P_{x_i}\oplus \mathcal P_{z_i}))$ is ample. Hence its pullback $\widetilde{\Theta}_k$ is an ample line bundle on $\mathbb P(\mathcal P_{x_1}\oplus \mathcal P_{z_1})\times_{J_0} \cdots \times_{J_0} \mathbb P(\mathcal P_{x_k}\oplus \mathcal P_{z_k})$. Since the morphism $\widetilde{\mathcal J_k}\rightarrow X_0^k$ is projective and the line bundle $\widetilde{\Theta_k}$ is ample on every fiber of the morphism, $\widetilde{\Theta_k}$ is relatively ample.
\end{proof}

\

\subsection{\textbf{Descent of the line bundle $\widetilde{\Theta_k}$}}

\

\begin{lema}\label{relations123}
\begin{enumerate}
\item $\tau_i^*(\widetilde{p_{k+1}}^*\mathcal P_{x})\cong \widetilde{p_{k+1}}^*\mathcal P_{x}\otimes \tilde{s_i}^*\mathcal O_{X_0}(-x)$, for any point $x\in X_0$,
\item $\tau_i^*((\text{Det}~~\tilde q^*\mathcal P)|_{\mathcal D'_i})\cong \widetilde{p_{k+1}}^*(\text{Det}~~\mathcal P)\otimes (\widetilde q^*\mathcal P)|_{\widetilde{r_i}^{-1}(\Delta)\cap (X_0\times \mathcal D_i)}\otimes (\widetilde q^*\mathcal P)^{-1}|_{z_i\times \mathcal D_i}\otimes \tilde s_i^* \mathcal O_{X_0}(z_i)$
\end{enumerate}
\end{lema}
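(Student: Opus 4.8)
\textbf{Proof proposal for Lemma \ref{relations123}.}

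The plan is to deduce both formulae from the explicit description of $\tau_i$ given in Lemma \ref{LemaInvo}, exactly as the single-node analogues were deduced in the lemma preceding Remark \ref{rules}. The underlying principle is the same: $\tau_i$ is characterised by a universal property, namely it is the map classifying the tuple $(L':=L\otimes\mathcal O(z_i-x_i), Q'_1,\dots,Q'_k)$, so to compute $\tau_i^*$ of any line bundle defined via the Poincar\'e family on $\mathcal D_i'$ it suffices to pull back the defining short exact sequences along $\tau_i$ and use that $\tau_i^*$ of the universal family on $\mathcal D_i'$ is, by construction, the twisted family $\mathcal P_i' = \widetilde q^*\mathcal P\otimes \widetilde{r_i}^*\mathcal O(-\Delta)\otimes\widetilde s^*\mathcal O(z_i)$ restricted to $\mathcal D_i$.

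For part (1), I would run the argument of statement (2) of the earlier lemma verbatim with $z$ replaced by $z_i$ and the twist divisor replaced by $\widetilde{r_i}^{-1}(\Delta)$: introduce the auxiliary product $X_0\times X_0^k\times J_0$ with the projections $q,r_i,s_i,s$ already fixed above, observe that $\tau_i^*(\widetilde{p_{k+1}}^*\mathcal P_x)$ is the restriction to $x\times(\text{base})$ of the pullback of $\widetilde q^*\mathcal P$ along $I\times\tau_i$, then invoke the universal property of $J_0$ to identify $(I\times\tau_i)^*(\widetilde q^*\mathcal P)\cong \widetilde q^*\mathcal P\otimes\widetilde{r_i}^*\mathcal O(-\Delta)\otimes\widetilde s^*\mathcal O(z_i)$, and finally restrict to $x$: the $\widetilde{r_i}^{-1}(\Delta)$ factor contributes $\mathcal O_{X_0}(-x)$ pulled back via $\widetilde{s_i}$ (since restricting $-\Delta$ to $x\times X_0$ gives $-x$), the $\widetilde s^*\mathcal O(z_i)$ factor restricts to the trivial bundle, and we are left with $\widetilde{p_{k+1}}^*\mathcal P_x\otimes\widetilde{s_i}^*\mathcal O_{X_0}(-x)$.

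For part (2), I would mirror statements (3) and (4) of the earlier lemma. First, $\tau_i^*(\text{Det}\,\widetilde q^*\mathcal P|_{\mathcal D_i'})\cong \text{Det}\bigl((I\times\tau_i)^*\widetilde q^*\mathcal P\bigr)|_{\mathcal D_i}$ by functoriality of the determinant of cohomology along the Cartesian square defining $\tau_i$. Then I compute $\text{Det}$ of the twisted Poincar\'e family $\mathcal P_i' = \widetilde q^*\mathcal P\otimes\widetilde{r_i}^*\mathcal O(-\Delta)\otimes\widetilde s^*\mathcal O(z_i)$ by filtering: the short exact sequence
\begin{equation}
0\to \widetilde q^*\mathcal P\otimes\widetilde{r_i}^*\mathcal O(-\Delta)\to \widetilde q^*\mathcal P\to (\widetilde q^*\mathcal P)|_{\widetilde{r_i}^{-1}(\Delta)}\to 0
\end{equation}
gives $\text{Det}(\widetilde q^*\mathcal P\otimes\widetilde{r_i}^*\mathcal O(-\Delta))\cong\text{Det}(\widetilde q^*\mathcal P)\otimes(\widetilde q^*\mathcal P)|_{\widetilde{r_i}^{-1}(\Delta)}$, and the short exact sequence twisting up by $\widetilde s^*\mathcal O(z_i)$ subtracts the restriction to $z_i\times\mathcal D_i$, i.e. contributes $(\widetilde q^*\mathcal P)^{-1}|_{z_i\times\mathcal D_i}\otimes\widetilde s_i^*\mathcal O_{X_0}(z_i)$ after noting $\mathcal O(-\Delta)$ restricted along $\widetilde s = z_i$ becomes $\widetilde s_i^*\mathcal O_{X_0}(-z_i)$, so the combined correction is $\widetilde s_i^*\mathcal O_{X_0}(z_i)$ with sign as stated. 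Restricting everything to $\mathcal D_i$ and using $\text{Det}\,\widetilde q^*\mathcal P\cong\widetilde{p_{k+1}}^*\text{Det}\,\mathcal P$ yields the claimed formula.

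The main obstacle, as in the one-node case, is bookkeeping rather than conceptual: one must be careful that the projections $\widetilde{r_i}, \widetilde s_i, \widetilde s$ and the identification $\widetilde{r_i}^{-1}(\Delta)\cap(X_0\times\mathcal D_i)\cong\mathcal D_i$ from the preliminary lemma are used consistently, and that restricting the divisor $-\Delta\subset X_0\times X_0$ to the two relevant slices ($x\times X_0$ and $z_i\times X_0$) is done with the correct sign. Since the fiber-product structure over $X_0^k\times J_0$ means the $i$-th twist only affects the $i$-th factor and leaves $\mathcal L_j$, $j\neq i$, untouched (which is precisely \eqref{eqa2}), no new phenomena arise from the presence of the other $k-1$ nodes, and the single-node computation transports directly.
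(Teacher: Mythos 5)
Your proposal is correct and follows exactly the route the paper intends: the paper's own proof of Lemma \ref{relations123} is simply the one-line remark that it is ``similar to the proof of Remark \ref{rules}'', i.e.\ transport the single-node computations of $(I\times\tau)^*(\widetilde q^*\mathcal P)$ and of $\operatorname{Det}$ of the twisted Poincar\'e family to the projections $\widetilde q,\widetilde{r_i},\widetilde{s_i},\widetilde s$ on $X_0\times X_0^k\times J_0$, which is precisely what you do. Your bookkeeping of the two restrictions of $-\Delta$ (to $x\times X_0$ for part (1) and to $z_i\times X_0$ for part (2)) and of the two short exact sequences computing $\operatorname{Det}\mathcal P'_i$ is accurate, so nothing is missing.
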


\

\begin{proof}
Similar to the proof of remark \ref{rules}.
\end{proof}

\

\begin{defe}\label{special1102}
We define a smaller open set 
\begin{equation}
B^o_k:=B_k\setminus \{(x_1,\dots, x_k)| x_i=p_0~~\text{for some}~~i\in \{1,\dots, k\}\} 
\end{equation}

\

We restrict the family $\mathfrak f_k: \mathcal J_k\to B_k$ to the smaller open subset $B^o_k$. By abuse of notation, we denote $\mathcal J_k|_{B^o_k}$ by $\mathcal J_k$. Finally, the family $\mathfrak f_k: \mathcal J_k\to B^o_k$ is our \textbf{desired family of specializations of $\overline{J_k}$}. 
\end{defe}

\

\begin{thm}\label{MT11}
\begin{enumerate}
\item The morphism $\mathfrak f_k: \mathcal{J}_k\rightarrow B^o_k$ is  projective.
\item The fibers of the morphism $\mathfrak f_k$ can be described as follows.
\begin{equation}
\mathfrak f_k^{-1}(x_1,\dots,x_k)\cong \left\{
\begin{array}{@{}ll@{}}
&\overline{J}_{_{X(x_1,x_2,\dots,x_k)}}\hspace{33pt}\text{if}~~x_i\neq z_i~~\text{for all}~~1 \leq i\leq k\\
&J_0\times R^k \hspace{45pt}\text{if}~~x_i=z_i~~\text{for all}~~1 \leq i\leq k
\end{array}\right\}
\end{equation}
\end{enumerate}

\end{thm}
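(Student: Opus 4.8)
The plan is to reduce the statement to the single-node case (Theorem \ref{Descent}) applied $k$ times, exactly as the construction of $\mathcal{J}_k$ was obtained from $\mathcal{J}_1$ by an inductive push-out. For part (1), projectivity of $\mathfrak f_k$, I would first show that the Theta bundle $\widetilde{\Theta_k}$ descends to $\mathcal{J}_k|_{B^o_k}$. Following the pattern of the proof of Theorem \ref{Descent}, the key computation is to verify that for each $i$ one has $\tau_i^*\big(\widetilde{\Theta_k}|_{\mathcal D'_i}\big)\cong \widetilde{\Theta_k}|_{\mathcal D_i}$ over $B^o_k$. This is where Lemma \ref{relations123} enters: expanding $\widetilde{\Theta_k}|_{\mathcal D_i}$ and $\widetilde{\Theta_k}|_{\mathcal D'_i}$ using \eqref{Theta1103}, the definition of $\mathcal{L}_i$ on the two divisors, and the identity $\mathcal{L}'_j = \mathcal{L}_j|_{\mathcal D_i}$ from \eqref{moquo} (so that the universal quotients for $j\neq i$ are preserved by $\tau_i$), the discrepancy between the two restrictions is exactly a twist by $\tilde{s_i}^*\mathcal{O}_{X_0}(-p_0)^{\otimes(g-1)}$ coming from the term $(\tilde q^*\mathcal P^{\otimes -(g-1)})|_{p_0\times\widetilde{\mathcal J_k}}$; this twist becomes trivial precisely once we pass to $B^o_k$, where $x_i\neq p_0$ for all $i$. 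Since $\mathcal D_i\cap\mathcal D'_i=\emptyset$ (Lemma \ref{Dis1}), Proposition \ref{Des1023} then yields descent of $\widetilde{\Theta_k}$ through the first push-out; applying the same argument inductively on $\mathcal{J}_{j}$ (using the compatibility of the $\tau_i$ from Lemma \ref{LemaInvo}(2) to make sense of the restrictions at each stage) gives a line bundle $\Theta_k$ on $\mathcal{J}_k$ with $\nu_k^*\Theta_k\cong\widetilde{\Theta_k}$. As $\widetilde{\Theta_k}$ is relatively ample for $\widetilde{\mathfrak f_k}$ and $\nu_k$ is finite (Proposition \ref{Acc}(2)), Proposition \ref{Acc}(3) shows $\Theta_k$ is relatively ample for $\mathfrak f_k$, so $\mathfrak f_k:\mathcal{J}_k\to B^o_k$ is projective.

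For part (2), I would compute the fibers. Fix $\overrightarrow{x}=(x_1,\dots,x_k)\in B^o_k$. Since the push-outs defining $\mathcal{J}_k$ are performed over $B_k$ and all the divisors $\mathcal D_i,\mathcal D'_i$ and the isomorphisms $\tau_i$ are flat/compatible with the projection to $B_k$, base change to the point $\overrightarrow{x}$ commutes with the successive push-out construction (this uses that $Z=\nu^{-1}(V)$ and the behaviour of the push-out under flat base change, cf. Proposition \ref{Artin}). Hence $\mathfrak f_k^{-1}(\overrightarrow{x})$ is the quotient of $\widetilde{\mathfrak f_k}^{-1}(\overrightarrow{x})=\mathbb P(\mathcal P_{x_1}\oplus\mathcal P_{z_1})\times_{J_0}\cdots\times_{J_0}\mathbb P(\mathcal P_{x_k}\oplus\mathcal P_{z_k})$ by the identifications induced by $\tau_1,\dots,\tau_k$ restricted to the fiber. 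In the case $x_i\neq z_i$ for all $i$: by Proposition \ref{GPBs} this fiber of $\widetilde{\mathcal J_k}$ is exactly the space of GPBs of rank one and degree zero on $X_0$ inducing torsion-free sheaves on $X(x_1,\dots,x_k)$, and the identifications $\tau_i|_{\overrightarrow{x}}$ are precisely the twisted isomorphisms \eqref{twist1012} among the $2^k$ GPBs giving the same torsion-free sheaf; the exact sequence \eqref{Induce1} then exhibits a family of rank-one torsion-free degree-zero sheaves on $X(x_1,\dots,x_k)$ parametrised by $\mathfrak f_k^{-1}(\overrightarrow{x})$, and comparing with the diagram \eqref{diag121} and the description of the normalisation map $\nu_k$ identifies $\mathfrak f_k^{-1}(\overrightarrow{x})\cong\overline{J}_{X(x_1,\dots,x_k)}$. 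In the case $x_i=z_i$ for all $i$: each $\mathbb P(\mathcal P_{z_i}\oplus\mathcal P_{z_i})\cong J_0\times\mathbb P^1$, the fiber product is $J_0\times(\mathbb P^1)^k$, and $\tau_i|_{\overrightarrow{x}}$ acts by $L\mapsto L\otimes\mathcal O_{X_0}(z_i-z_i)=L$ while swapping the two distinguished sections of the $i$-th $\mathbb P^1$ factor — i.e. it is the fibrewise identification collapsing the $i$-th $\mathbb P^1$ to the rational nodal curve $R$; since the $\tau_i$ act on distinct factors, the iterated quotient is $J_0\times R^k$.

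The main obstacle I anticipate is the first step of part (1): carrying out the inductive descent of $\widetilde{\Theta_k}$ through the $k$ successive push-outs, keeping careful track of how $\Theta_k|_{\mathcal D_i}$ and $\Theta_k|_{\mathcal D'_i}$ transform at each stage (in particular that the terms $(\widetilde q^*\mathcal P)|_{\widetilde{r_i}^{-1}(\Delta)\cap(X_0\times\mathcal D_i)}$ and $\mathcal L'_j=\mathcal L_j|_{\mathcal D_i}$ match up after applying $\tau_i$, via Lemma \ref{relations123} and \eqref{eqa2}) and verifying that the only residual obstruction is the $\tilde s_i^*\mathcal O_{X_0}(p_0)$-twist which dies over $B^o_k$. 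The compatibility of the $\tau_i$ (Lemma \ref{LemaInvo}(2)) is what guarantees that the restriction of the descended bundle at the $j$-th stage to the divisor $\mathcal D^{j-1}_{j}$ is still of the expected form, so that Proposition \ref{Des1023} continues to apply; making this bookkeeping precise is the technical heart of the argument. Once descent is established, projectivity is immediate from Proposition \ref{Acc}, and the fiber computation in part (2) is essentially a matter of unwinding definitions together with Proposition \ref{GPBs} and the diagram \eqref{diag121}.
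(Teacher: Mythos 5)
Your proposal is correct and follows essentially the same route as the paper: descent of $\widetilde{\Theta_k}$ is established by checking $\tau_i^*(\widetilde{\Theta_k}|_{\mathcal D'_i})\cong\widetilde{\Theta_k}|_{\mathcal D_i}\otimes(\text{a power of }\tilde s_i^*\mathcal O_{X_0}(p_0))$ via Lemma \ref{relations123} and \eqref{eqa2}, the residual twist dying over $B^o_k$, after which Propositions \ref{Des1023} and \ref{Acc} give projectivity, and the fibers are computed exactly as in Theorem \ref{Descent}. Your explicit remark that the fibrewise quotient agrees with the fiber of the quotient is a point the paper leaves implicit, but it is an elaboration rather than a different argument.
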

\begin{proof}
First, we claim that 

\

\underline{Claim:} the line bundle $\widetilde{\Theta_k}$ is invariant under the isomorphisms $\tau_i$ for all $i=1,\dots, k$.

\

Assuming the claim, we see that $\widetilde{\Theta_k}$ descends at each of the $k$-steps of the quotient construction. Let us denote the descended line bundle on $\mathcal J_k$ by $\Theta_k$. Since $\widetilde{\Theta_k}$ is a relatively ample line bundle for the proper morphism $\widetilde{\mathcal{J}_k}\rightarrow B^o_k$, and $\nu_k: \widetilde{\mathcal J_k}\rightarrow \mathcal J_k$ is a finite morphism, the descended line bundle $\Theta_k$ is also relatively ample for the morphism $\mathfrak f_k: \mathcal J_k\rightarrow B^o_k$. Therefore $\mathfrak f_k$ is a projective morphism. 

\

\underline{proof of the claim:} The proof of the claim is similar to the proof of Theorem \ref{Descent}. For every $i=1,\dots, k$, we have
\begin{align*}
\widetilde \Theta_k|_{\mathcal D_i}
&\cong (\text{Det}~~\tilde q^*\mathcal P)|_{\mathcal D_i}\otimes (\otimes_{j\neq i} \mathcal L_j|_{\mathcal D_i})\otimes \mathcal L_i|_{\mathcal D_i}\otimes (\tilde q^*\mathcal P^{\otimes -(g-1)})|_{p_0\times \mathcal D_i}\\
&\cong \widetilde{p_{k+1}}^*(\text{Det}~~\mathcal P)\otimes (\otimes_{j\neq i} \mathcal L_j|_{\mathcal D_i})\otimes (\widetilde q^*\mathcal P)|_{\widetilde{r_i}^{-1}(\Delta)\cap (X_0\times \mathcal D_i)}\otimes \widetilde{p_{k+1}}^*(\mathcal P_{p_0}^{\otimes -(g-1)})\\
\end{align*}
and 
\begin{align*}
\widetilde \Theta_k|_{\mathcal D'_i}
&\cong (\text{Det}~~\tilde q^*\mathcal P)|_{\mathcal D'_i}\otimes (\otimes_{j\neq i} \mathcal L_j|_{\mathcal D'_i})\otimes \mathcal L_i|_{\mathcal D'_i}\otimes (\tilde q^*\mathcal P^{\otimes -(g-1)})|_{p_0\times \mathcal D'_i}\\
&\cong \widetilde{p_{k+1}}^*(\text{Det}~~\mathcal P)\otimes (\otimes_{j\neq i} \mathcal L_j|_{\mathcal D'_i})\otimes (\widetilde q^*\mathcal P)|_{z_i\times \mathcal D'_i}\otimes \widetilde{p_{k+1}}^*(\mathcal P_{p_0}^{\otimes -(g-1)})\\
\end{align*}

Therefore,
\begin{align*}
\tau_i^*(\widetilde \Theta_k|_{\mathcal D'_i})
&\cong \tau_i^*((\text{Det}~~\tilde q^*\mathcal P)|_{\mathcal D'_i})\otimes \tau_i^*(\otimes_{j\neq i} \mathcal L_j|_{\mathcal D'_i})\otimes \tau_i^*((\widetilde q^*\mathcal P)|_{z_i\times \mathcal D'_i})\otimes \tau_i^*(\widetilde{p_{k+1}}^*(\mathcal P_{p_0}^{\otimes -(g-1)}))\\
&\cong \tau_i^*((\text{Det}~~\tilde q^*\mathcal P)|_{\mathcal D'_i})\otimes \otimes_{j\neq i} \mathcal L_j|_{\mathcal D_i}\otimes (\widetilde q^*\mathcal P)|_{z_i\times \mathcal D_i}\otimes \tilde s_i^* \mathcal O_{X_0}(-z_i)\otimes\\
& \otimes \widetilde{p_{k+1}}^*(\mathcal P_{p_0}^{\otimes -(g-1)}) \otimes \tilde s_i^* \mathcal O_{X_0}(-p_0)^{\otimes -(g-1)}, ~~(\eqref{eqa2}~\text{and}~\eqref{relations123})\\
&\cong \widetilde{p_{k+1}}^*(\text{Det}~~\mathcal P)\otimes (\widetilde q^*\mathcal P)|_{\widetilde{r_i}^{-1}(\Delta)\cap (X_0\times \mathcal D_i)}\otimes (\widetilde q^*\mathcal P)^{-1}|_{z_i\times \mathcal D_i}\otimes \tilde s_i^* \mathcal O_{X_0}(z_i) \otimes(\otimes_{j\neq i} \mathcal L_j|_{\mathcal D_i})\\
&\otimes (\widetilde q^*\mathcal P)|_{z_i\times \mathcal D_i}\otimes \tilde s_i^* \mathcal O_{X_0}(-z_i) \otimes \widetilde{p_{k+1}}^*(\mathcal P_{p_0}^{\otimes -(g-1)})\otimes \tilde s_i^* \mathcal O_{X_0}(-p_0)^{\otimes -(g-1)}\\
& \cong \widetilde{p_{k+1}}^*(\text{Det}~~\mathcal P)\otimes (\otimes_{j\neq i} \mathcal L_j|_{\mathcal D_i})\otimes (\widetilde q^*\mathcal P)|_{\widetilde{r_i}^{-1}(\Delta)\cap (X_0\times \mathcal D_i)}\otimes \widetilde{p_{k+1}}^*(\mathcal P_{p_0}^{\otimes -(g-1)}) \otimes\\
& \otimes \tilde s_i^* \mathcal O_{X_0}(-p_0)^{\otimes -(g-1)}\\
& \cong \widetilde{\Theta_k}|_{\mathcal D_i}\otimes \tilde s_i^* \mathcal O_{X_0}(-p_0)^{\otimes -(g-1)}
\end{align*}

Therefore, over $B^o_k$, we have 
\begin{equation}
\tau_i^*(\widetilde{\Theta_k}|_{\mathcal D'_i})\cong \widetilde{\Theta_k}|_{\mathcal D_i} 
\end{equation}

The proof of the second statement is similar to the proof of the second statement of Theorem \ref{Descent}.
\end{proof}

\section{\textbf{Local triviality of the family of $\mathcal J_k$ over $B^o_k$}}

The main theme of this section is to prove that  $\mathcal{J}_k$ is a topological fiber bundle over $B^o_k$. As $\mathfrak f_k$ is not a smooth map, one can not use the Ehressman fibration Theorem. Instead, we apply the \emph{The first Isotopy lemma} of Thom to conclude that $\mathfrak f_k$ is a locally trivial fibration. To do that, we need to construct a stratification $\mathbb{S}$ of $\mathcal{\J}_k$ which satisfies \emph{Whitney's conditions} and also such that the restriction of the map $\mathfrak f_k: \mathcal{J}_k\to B^o_k$ to each stratum is a submersion.

\

Let $M$ be a smooth manifold and $N$ be a closed subset of $M$. A collection $\mathbb{S}:=\{X_\alpha, \alpha\in I ~|~ X_\alpha \text{ are locally closed submanifold of } M\}$ is said to be a \textbf{stratification} of $N$ if $N\cong \underset{\alpha\in I}{\bigsqcup}X_{\alpha}$ and $\bar{X}_{\alpha}\setminus X_{\alpha}=\bigsqcup X_{\beta}$, for some $\beta\in I$ and $\beta\neq\alpha$.

\

\subsection{\textbf{Whitney's conditions}}\label{Wh}
A stratification $\mathbb{S}$ of $N$ is said to be a \textbf{Whitney stratification} if $\mathbb{S}$ is locally finite and satisfies the following conditions at every point $x\in N$. Let us choose a pair $(X_{\alpha}, X_{\beta})$ such that $X_{\beta}\subset \bar{X}_{\alpha}$ and $x\in X_{\beta}$.
\begin{enumerate}
\item{\textsf{Condition (a):}}
We say that the pair $(X_{\alpha}, X_{\beta})$ satisfies the \textbf{Whitney's condition $(a)$} at $x$ if 
for any  sequences $\{x_n\}\subset X_{\alpha}$ such that $\{x_n\}$ converges to $x$, the sequence  $\{T_{x_n}X_{\alpha}\}$ of tangent planes of $X_{\alpha}$ at $x_n$ converges to a plane $\mathcal{T}:=\lim T_{x_n}X_{\alpha}\subset T_{x}M$ of $\dim(X_{\alpha})$ and   $T_{x}X_{\beta}\subset \mathcal{T}$ where $T_{x}X_{\beta}$ is the tangent plane of $X_{\beta}$ at $x$. 
\item{\textsf{Condition (b):}}
The pair $(X_{\alpha}, X_{\beta})$ satisfies the \textbf{Whitney's condition $(b)$} at $x$ if for any 
sequences $\{x_n\}\subset X_{\alpha}$, $\{y_n\}\subset X_{\beta}$ converging to $x$, then $\mathcal{T}\supset \tau$, the limit of the secants joining $x_n$ and $y_n$, $\tau:=\lim \overline{x_ny_n}$. 
\end{enumerate}

\

Any stratification that satisfies the above conditions is called a Whitney stratification. A stratification that satisfies the Condition $(b)$ of Whitney will also satisfy the Condition $(a)$ \cite[Lemma 2.2]{9}. 

\

The following lemma is well-known, and we leave the proof to the reader.

\

\begin{lema}\label{eqn1}
Let $X$ and $Y$ be two varieties equipped with Whitney stratifications $A$ and $B$, respectively. Then the product stratification is also a Whitney stratification on $X\times Y$.
\end{lema}

\

\subsection{\textbf{Stratification by successive singular loci}}

In this subsection, we will describe a natural Whitney stratification on $\mathcal J_k$. The stratification is constructed as follows. First, we show that $\widetilde{\mathcal{J}}_k$ has a natural stratification. Then, the image of these stratifications under the map $\nu_k: \widetilde{\mathcal{J}}_k\to \mathcal J_k$ defines a stratification on $\mathcal J_k$. Roughly speaking, the stratification is given by the loci of torsion-free sheaves, which are not locally free at a given subset of the nodes. The precise description is as follows. 

\

Given a subset $\{i_1, \dots, i_r\}\subset [1,\dots,k]$ with $1\leq i_1<\dots<i_r\leq k$ and a map $\phi: \{i_1, \dots, i_r\}\rightarrow \{1,2\}$ we define a stratification of $\mathcal{J}_k$ as follows.
Recall $$\widetilde{\mathcal{J}}_k:=\mathbb P(p_{1,k+1}^*\mathcal P\oplus p_{k+1}^* \mathcal P_{z_1})\times_{_{X_0^k\times J_0}} \cdots \times_{_{X_0^k\times J_0}} \mathbb P(p_{k,k+1}^*\mathcal P\oplus p_{k+1}^* \mathcal P_{z_k}).$$
Consider the subvariety

$$
\widetilde{W}^{\phi}_{i_1,\dots, i_r}:=\mathbb P_1\times_{_{X_0^k\times J_0}} \dots \times_{_{X_0^k\times J_0}} \mathbb P_{i_1-1}\times_{_{X_0^k\times J_0}} \widetilde{W}^{\phi(i_1)}_{i_1}\times_{_{X_0^k\times J_0}} \mathbb P_{i_1+1}\times_{_{X_0^k\times J_0}} \dots
$$
$$
\cdots \times_{_{X_0^k\times J_0}} \mathbb P_{i_r-1}\times_{_{X_0^k\times J_0}} \widetilde{W}^{\phi(i_r)}_{i_r}\times_{_{X_0^k\times J_0}} \mathbb P_{r_1+1}\times_{_{X_0^k\times J_0}} \dots\times_{_{X_0^k\times J_0}} \mathbb P_k,
$$

where
$$
\widetilde{W}^{\phi(i)}_i:=\mathbb P(p^*_{i,k+1} \mathcal P) ~~~~\hspace{2em}\text{if}~~~~ \phi(i)=1
$$
and
$$
\widetilde{W}^{\phi(i)}_i:=\mathbb P(p^*_{k+1} \mathcal P_{z_i}) ~~~~\hspace{2em}\text{if}~~~~ \phi(i)=2.
$$
Define $$\widetilde{S}_r:=\bigcup_{1\leq i_1<\dots<i_r\leq k, \text{ }\phi } \widetilde{W}^{\phi}_{i_1,\dots, i_r}.$$
Set $S_r:=\nu_k(\widetilde{S}_r)$, where $\nu_k: \widetilde{\mathcal J}_k\rightarrow \mathcal{J}_k$. It is easy to see that 

$$
S_k\subset S_{k-1}\subset \dots \subset S_1 \subset S_0:=\mathcal{J}_k.$$
It follows that $S_0=\coprod^{i=k}_{i=0} (S_{i}\setminus S_{i+1})$, where $S_{k+1}=\varnothing$, the empty set. In particular, we obtain $\mathbb{S}:=\{S_i\setminus S_{i+1}:0\leq i\leq k\}$,
a stratification of $\mathcal{J}_{k}$.

\

\begin{rema}\label{it2}
From Proposition \ref{Sing}, it follows that the singular locus of every connected component of $S_i$ lies in a unique connected component of $S_{i+1}$ for every $i=0, \dots, k$.
In fact, along $(S_i\setminus S_{i+1})$ the variety $S_0$ has a product of exactly $i$-many normal crossing singularities.\end{rema}

\

\begin{thm}
The stratification $\mathbb{S}$ on $\mathcal{J}_k$ is a Whitney stratification.
\end{thm}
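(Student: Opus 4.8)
The plan is to reduce the verification of the Whitney conditions to the explicit complex-analytic local model of $\mathcal J_k$ supplied by Proposition \ref{Sing} and Remark \ref{it2}. First I would record the routine facts: each $S_r=\nu_k(\widetilde S_r)$ is closed ($\nu_k$ being finite, hence closed), so $S_r\setminus S_{r+1}$ is locally closed; and near a point of $S_r\setminus S_{r+1}$ --- which lies in the open set $\mathcal J_k\setminus S_{r+1}$ --- the model of Remark \ref{it2} exhibits $\mathcal J_k$ as an $r$-fold normal crossing, so $S_r\setminus S_{r+1}$ is a smooth locally closed submanifold and $\overline{S_r\setminus S_{r+1}}\setminus(S_r\setminus S_{r+1})=S_{r+1}=\bigsqcup_{s>r}(S_s\setminus S_{s+1})$. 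Hence $\mathbb S$ is a stratification in the sense of the definition above, and only the Whitney conditions remain. As these are local and, by \cite[Lemma 2.2]{9}, condition $(a)$ follows from condition $(b)$, it suffices to fix $i<j$, a point $p\in S_j\setminus S_{j+1}$, and to verify condition $(b)$ for the pair $(S_i\setminus S_{i+1},\,S_j\setminus S_{j+1})$ at $p$ inside a small analytic neighbourhood $U$ of $p$.

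Next I would set up the model. By Proposition \ref{Sing} and Remark \ref{it2}, after shrinking $U$ there is an analytic isomorphism
\[
U\;\cong\;C^{\,j}\times\mathbb C^{\,m},\qquad C:=\{uv=0\}\subset\mathbb C^{2},
\]
for some $m\geq 0$, carrying $p$ to the origin and $S_\ell\setminus S_{\ell+1}$ to the locus where exactly $\ell$ of the $j$ nodal factors sit at their singular point $\{0\}$ (note $S_{j+1}\cap U=\varnothing$, as $S_{j+1}$ is closed and avoids $p$). Stratify $C$ by $\{0\}$ and $C\setminus\{0\}$, and $\mathbb C^{m}$ trivially, and let $\Sigma$ be the product stratification of $C^{\,j}\times\mathbb C^{m}$, with strata $Y_\epsilon:=\bigl(\prod_{l=1}^{j}C^{(\epsilon_l)}\bigr)\times\mathbb C^{m}$ for $\epsilon\in\{0,1\}^{j}$, where $C^{(0)}=\{0\}$ and $C^{(1)}=C\setminus\{0\}$. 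Under the isomorphism, $S_\ell\setminus S_{\ell+1}$ is the union of the $Y_\epsilon$ with $\#\{l:\epsilon_l=0\}=\ell$; in particular $S_j\setminus S_{j+1}$ is the single deepest stratum $Y_0:=\{0\}^{\,j}\times\mathbb C^{m}$, and $S_i\setminus S_{i+1}$ is a disjoint union of equidimensional strata $Y_\epsilon$, each of which --- using that closures here are coordinate products, $\overline{Y_\epsilon}=\bigl(\prod_l\overline{C^{(\epsilon_l)}}\bigr)\times\mathbb C^{m}$ --- is clopen in $S_i\setminus S_{i+1}$. Thus $\mathbb S|_U$ is the coarsening of $\Sigma$ obtained by amalgamating equidimensional strata.

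Now $\Sigma$ is a Whitney stratification: the node $C=\{uv=0\}$ with strata $\{0\}$ and $C\setminus\{0\}$ satisfies condition $(b)$ trivially --- for $x_n\in C\setminus\{0\}$ with $x_n\to 0$, eventually $x_n$ lies on one coordinate axis, and both $T_{x_n}(C\setminus\{0\})$ and the secant $\overline{x_n\,0}$ equal that axis --- so by Lemma~\ref{eqn1} the product $\Sigma$ is Whitney. It remains to transfer this to $\mathbb S|_U$. Given sequences $x_n\to p$ in $S_i\setminus S_{i+1}$ and $y_n\to p$ in $S_j\setminus S_{j+1}=Y_0$, and a subsequence along which both $T_{x_n}(S_i\setminus S_{i+1})$ and the secants $\overline{x_ny_n}$ converge, pass to a further subsequence so that $x_n$ lies in one $\Sigma$-stratum $Y_\epsilon$ (there are finitely many). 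Since $Y_0$ is the smallest stratum, $Y_0\subseteq\overline{Y_\epsilon}$, so condition $(b)$ for $(Y_\epsilon,Y_0)$ --- valid because $\Sigma$ is Whitney --- holds at $p$; and because $Y_\epsilon$ is clopen in $S_i\setminus S_{i+1}$ one has $T_{x_n}Y_\epsilon=T_{x_n}(S_i\setminus S_{i+1})$ and the secants are unchanged, so the conclusion of $(b)$ for $(Y_\epsilon,Y_0)$ is precisely the conclusion of $(b)$ for $(S_i\setminus S_{i+1},\,S_j\setminus S_{j+1})$ along this subsequence. Varying over all convergent subsequences, all pairs $i<j$, and all $p$ proves the theorem.

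\textbf{The main obstacle.} The delicate point is the transfer in the last paragraph: a coarsening of a Whitney stratification need not be Whitney, so the argument genuinely uses the explicit coordinate form of the model --- that $S_j\setminus S_{j+1}$ appears as the \emph{smallest} stratum of the fine product stratification $\Sigma$ (hence automatically in the closure of every other stratum), that the coarse strata are honest (possibly disconnected) manifolds, and that each coarse stratum restricts to a union of connected components over each relevant fine stratum, so that limiting tangent planes and secant directions are unaffected --- together with the subsequence extraction reducing an arbitrary pair of sequences in the coarse strata to sequences confined to single fine strata. The remaining ingredients --- the local normal-crossing description, the triviality of the Whitney property for one node, the product Lemma~\ref{eqn1}, and the implication $(b)\Rightarrow(a)$ --- are quoted or immediate.
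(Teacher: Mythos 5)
Your proof is correct and follows essentially the same route as the paper: localise to the analytic normal-crossing model of Proposition \ref{Sing}, check Whitney's condition $(b)$ for the node $\{uv=0\}$ stratified by its singular point and its complement, and invoke Lemma \ref{eqn1} for products. In fact your explicit handling of the passage from the fine product stratification to its equidimensional coarsening $\mathbb{S}$ (clopen-ness of each fine stratum inside the coarse one, plus subsequence extraction) is more careful than the paper's own argument, which simply identifies $\mathbb{S}|_{U_p}$ with the unions $\mathbb{T}_j$ of product strata and concludes without addressing that a coarsening of a Whitney stratification need not a priori be Whitney.
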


\

\begin{proof}
Since the Whitney conditions are local properties, it is enough to prove it for small analytic neighbourhoods of every point. It is also enough to prove for small analytic neighbourhood of every point $p\in S_k$, because the proof for other points will be similar. For any point $p\in S_k$, there exists a local analytic neighborhood $U_p$ of $p$ which is homeomorphic to
$ X_0\times X_1\times \dots \times X_k$ where $X_0\cong \mathbb{A}^{m}$, $X_i\cong \Spec(\frac{k[[x,y]]}{xy})$ and $m:=\dim(\mathcal{J}_k)-k$.

It is enough to prove that the restriction of the stratification $\mathbb{S}:=\{S_{i}\setminus S_{i+1}:0\leq i\leq k\}$ to $U_p$ is a Whitney stratification on $U_p$. To do that we show that the restriction of $\mathbb{S}$ to $U_p$ is the product of a Whitney stratification of each $X_i$. Then using Lemma \ref{eqn1}, it follows that $\mathbb{S}$ is a Whitney stratification.

Let us consider the case when $X_0$ is a point. We have $U_p\cong X_1\times \dots \times X_k$.
The product stratification on $X_1\times \dots \times X_k$ is the following. For each $i\geq1$, the Whitney stratification on $X_i$ is $$X_i=\left(X_i\setminus 0_i\right)\amalg 0_i,$$
where $0_i$ is the only singular point of $X_i$.
Then the zero dimensional strata on $X_1\times \dots \times X_k$ is
$$\mathbb{T}_0:=(0_1,\dots ,0_k).$$
Let us define $T_1^i:= 0_1\times 0_2\times\dots\times \left(X_i\setminus 0_i\right)\times 0_{i+1}\times\dots \times0_k.$
The one-dimensional strata is
$$\mathbb{T}_1:=\bigcup_{1\leq i\leq k}T_1^i.$$
Similarly, a typical $j$-dimensional stratum is
$$T_j^{i_1,\dots, i_j}:= 0_1\times\dots\times\left( X_{i_1}\setminus 0_{i_1}\right)\times 0_{i+1}\times\dots\times\left( X_{i_j}\setminus 0_{i_j}\right)\times\dots\times 0_k,$$
and the $j$-dimensional strata is
$$\mathbb{T}_j:=\bigcup_{1\leq i_1<\dots<i_r\leq k }T_j^{i_1,\dots, i_j}.$$
In particular for $j=k$, the $k$-dimensional strata is the following
$$\mathbb{T}_k=\left(X_1\setminus 0_1\right)\times \dots \times \left(X_k\setminus 0_k\right).$$
The product stratification $\mathbb{T}=\{\mathbb{T}_{j}:1\leq j\leq k\}$ can also be expressed as
$$\mathbb{T}_j=\tilde{T}_j\setminus \tilde{T}_{j-1},$$
%
% \begin{equation}\label{it1}
% \mathbb{T}_j=\left\{\bigcup_{T_j \in\tilde{\mathbb T}_j}\left(T_j\setminus\underset{T\in\tilde{T}_{j-1}}{\bigcup}T\right)\right\},
% \end{equation}
%
where $$\tilde{\mathbb{T}}_j:=\bigcup_{1\leq i_1<\dots<i_r\leq k }\tilde{T}_j^{i_1,\dots, i_j}$$
and $$\tilde{T}_j^{i_1,\dots, i_j}=0_1\times\dots\times X_{i_1}\times 0_{i+1}\times\dots\times X_{i_j}\times\dots \times0_k.$$

Evidently, $\tilde T_{k-j}$ is the locus of points at which $U_p$ has the product of exactly $j$-many normal crossings singularities. Therefore from the Remark \ref{it2} it follows that ${{S}_{j}}|_{_{U_p}}=\tilde{T}_{k-j}$. Hence $\mathbb{S}$ is a Whitney stratification. The general case will follow by replacing $\mathbb S:=\{S_i\setminus S_{i+1}: 0\leq i\leq k\}$ by $\mathbb{A}^{m}\times \mathbb S:=\{\mathbb A^m\times (S_i\setminus S_{i+1}) : 0\leq i\leq k\}$.

\end{proof}

\

\begin{thm}\label{maint}\label{MixH1}
\begin{enumerate}
\item The morphism $ \mathfrak f_k:\mathcal J_k\rightarrow B^o_k$ is topologically locally trivial.
\item $\mathcal{R}^i \mathfrak f_{k*}\mathbb{Q}$ forms a variation of mixed Hodge structures over $B^o_k$.
\end{enumerate}
\end{thm}

\

\begin{proof}
There is a relatively ample line bundle $\Theta_k$ on the projective variety $\mathcal{J}_k$. We can replace $\Theta_k$ by its sufficiently large power so that it is relatively very ample. Therefore we have an embedding:
\begin{equation}
\begin{tikzcd}
\mathcal{J}_k\arrow[hook]{rr}\arrow{dr}&& \mathbb P(H^0(\Theta_k))\arrow{dl}\\
& B^o_k
\end{tikzcd}
\end{equation}
The morphism $\mathbb P(H^0(\Theta_k))\rightarrow B^o_k$ is a submersion and $\mathcal{J}_k$ is a closed subset of $\mathbb P(H^0(\Theta_k))$ which has a Whitney stratification given by $\mathcal{J}_k=\cup^{i=k}_{i=0} (S_{i}\setminus S_{i+1})$ such that the projection from every strata $S_i\setminus S_{i+1}\rightarrow B^o_k$ is a submersion. Therefore from Thom's first isotropy theorem \cite[Proposition 11.1]{9} it follows that $\mathcal{J}_k\rightarrow B^o_k$ is topologically locally trivial. This proves (1).

\

By (1), $\mathfrak f_{k}$ is topologically locally-trivial. Hence $\mathcal{R}^i \mathfrak f_{k*}\mathbb{Q}$ is a locally constant sheaf of finite type over $B^o_k$  for all $i$. Since $B^o_k$ is nonsingular, $\mathcal{R}^i \mathfrak f_{k*}\mathbb{Q}$ forms a variation of mixed Hodge structures over $B^o_k$ with a canonical choice of $\{\mathcal{W}_{n}\}$ and $\{\mathcal{F}^p\}$ \cite[Proposition 8.1.16]{2}. This proves (2).
\end{proof}

\

\section{\textbf{Applications: Betti numbers and mixed Hodge numbers of the cohomologies of a compactified Jacobian}}

As before, let $k$ be a positive integer. Let $X_k$ denote any irreducible nodal curve of arithmetic genus $g$. Let us denote its normalization by $q_k: X_0\rightarrow X_k$. Let us denote the nodes of $X_k$ by $\{y_1,\dots, y_k\}$ and the inverse image of the node $y_i$ under the normalization map by $\{x_i, z_i\}$ for every $i=1,\dots, k$. We fix such a nodal curve $X_k$. We denote its compactified Jacobian by $\overline{J}_k$ and its normalization by $\widetilde{J_k}$.

In \cite[Section 5]{3}, Bhosle and Parameswaran computed the Betti numbers of $\bar{J}_k$ by comparing the Betti numbers with that of the normalization of $\bar{J}_k$ and using induction on the genus of the nodal curve. Here, we discuss a different way to compute the Betti numbers using the family $\mathcal J_k$. We also compute the mixed Hodge numbers of $\bar{J}_k$.

\

\begin{thm}

\begin{enumerate}

\item Then $i$-th betti number of $\bar{J}_k$
\begin{equation}\label{we123}
h^{i}(\bar{J}_k)=h^{i}\left(J_0\times R^k\right)=\sum_{0\leq l\leq \tt{min}\{i,2k\}}\binom{2(g-k)}{i-l}.\sum_{\frac{1}{2}\leq j\leq \tt{min}\{l,k\}}\binom kj. \binom j{2j-l}.
\end{equation}

\item The dimension of $gr_{l}^{W}\left(H^{i}(\bar{J}_k)\right)$ is
\begin{equation}\label{we1234}
\dim_{\mathbb{Q}}\gr_{l}^{W}\left(H^{i}(\bar{J}_k)\right)= \sum_{0\leq t\leq l, (l-t)~ \text{is even}} \binom {2(g-k)}{t}.\binom {k}{i-\frac{l-t}{2}}. \binom{i-\frac{l-t}{2}}{i-l+t} 
\end{equation}
and
\item
For $p$, $q\geq0$ such $p+q=l$, the dimension of 
\[\dim_{\mathbb{C}}\gr_F^p\gr^p_{\bar{F}}\left(\gr_{l}^{W}\left(H^{i}(\bar{J}_k)\right)\right)=\sum_{0\leq t\leq l, (l-t)~ \text{is even}} \binom {g-k}{p-\frac{l-t}{2}}\binom{g-k}{q-\frac{l-t}{2}}\binom {k}{i-\frac{l-t}{2}}. \binom{i-\frac{l-t}{2}}{i-l+t}\]
 \end{enumerate}
\end{thm}

\

\begin{proof}
\underline{\textsf{proof of (1).}} Since the family $\mathfrak f_k: \mathcal J_k\rightarrow B^o_k$ constructed in section $4$ is topologically locally trivial (by Theorem \ref{maint}), the fiber over $(x_1,\dots, x_k)$ is homeomorphic to the fiber over $(z_1,\dots, z_k)$. Thus their Betti numbers agree i.e., 
\begin{equation}
h^{i}(\bar{J}_k)=h^{i}\left(J_0\times R^k\right).
\end{equation}
Now consider the Kunneth decomposition
\begin{equation}\label{key34}
H^{i}(R^k)=\bigoplus_{0\leq t\leq j\leq k }
\left(\bigotimes^{k-j}H^{0}(R)\bigotimes^{t} H^{1}(R)\bigotimes^{j-t} H^{2}(R)\right),
\end{equation}
where $2j-t=i$.
Since each of the Kunneth components are one dimensional,
\begin{align*}
h^{i}(R^k)=&\sum_{0\leq t\leq j\leq k }\binom k{k-j}.\binom jt=\sum_{\frac{i}{2}\leq j\leq \tt{min}\{i,k\}}\binom kj.\binom j{2j-i}\notag\\
=&\sum_{\frac{i}{2}\leq j\leq \tt{min}\{i,k\}}\binom kj.\binom j{2j-i}.\label{eq11}
\end{align*}
\begin{align*}
h^{i}\left(J_0\times R^k\right)=&\sum_{0\leq l\leq \tt{min}\{i,2k\}}h^{i-l}(J_0).h^{l}(R^k)\\
=&\sum_{0\leq l\leq \tt{min}\{i,2k\}}\binom {2(g-k)}{i-l}.\sum_{\frac{l}{2}\leq j\leq \tt{min}\{l,k\}}\binom kj. \binom j{2j-l}.\hspace{30pt}\left(\text{by }\eqref{key34}\right)
\end{align*}
Hence the proof of \eqref{we123} follows. 

\

\underline{\textsf{proof of (2) and (3).}}
From Theorem (\ref{MixH1}), $\mathcal{R}^i\mathfrak f_{k*}\mathbb{Q}$ forms a VMHS. Thus for each $j\geq0$, $\gr_{j}^{\mathcal{W}}\left(\mathcal{R}^i\mathfrak f_{k*}\mathbb{Q}\right)$ forms a canonical variation of Hodge structures. In particular the dimension  and Hodge numbers of $\gr_{j}^{W}\left(H^{i}(J_k)\right)$  and $\gr_{j}^{W}\left(H^{i}(J(X_0)\times R^k)\right)$ are equal. 

For the rational nodal curve $R$, the cohomology $H^{2}(R)$ has pure weight $2$ of type $(1,1)$
and $H^{1}(R)$ and $H^{0}(R)$ have weight $0$ of type $(0,0)$.  Therefore, the weight of each summand in \eqref{key34} is $2(j-t)$ and type $(j-t,j-t)$. In particular, each summand is isomorphic to the Hodge-Tate structure $\mathbb{Q}(t-j)$. 
 
Hence, for any $l\geq0$, 
$$
\dim_{\mathbb{Q}}\gr_{2l+1}^{W}\left(H^{i}\left(R^k\right)\right)=0.
)$$ 
and $\gr_{2l}^{W}\left(H^{i}\left(R^k\right)\right)$ is isomorphic to direct sum of $\mathbb{Q}(-l)$ as a mixed Hodge structures.
Thus for all $i\geq0$, $H^{i}\left(R^k\right)$ has a mixed Hodge-Tate structure.
In order to compute the dimension of $\gr_{2l}^{W}\left(H^{i}\left(R^k\right)\right)$, using \eqref{key34}, one obtains 
\begin{equation}\label{we12}
j-t=l
\end{equation}
\begin{equation}\label{we14}
2j-t=i .
\end{equation}
Solving \eqref{we12} and \eqref{we14} we have $t=i-2l$ and $j=i-l$.
Therefore
\begin{equation}\label{we15}
\dim_{\mathbb{Q}}\gr_{2l}^{W}\left(H^{i}\left(R^k\right)\right)=\binom k{i-l}. \text{ }
\binom {i-l}{i-2l}
\end{equation}
Now consider

\begin{equation}\label{it11}
    \dim_{\mathbb{Q}}\gr_{l}^{W}\left(H^{i}\left(J_0\times R^k\right)\right)=\sum_{0\leq t\leq l}h^{t}(J_0).\dim_{\mathbb{Q}}\left(\
gr^{W}_{l-t}H^{i-t}\left(R^k\right)\right).
\end{equation}
Since $H^{i}\left(R^k\right)$ is a mixed Hodge-Tate structure, $\gr^{W}_{l-t}H^{i-t}\left(R^k\right)=0$
if and only if $l\neq t(\text{ mod } 2)$. Then from \eqref{it11}, one has

\begin{align*}
\dim_{\mathbb{Q}}\gr_{l}^{W}\left(H^{i}\left(J_0\times R^k\right)\right)=&\sum_{0\leq t\leq l}h^{t}(J_0).\dim_{\mathbb{Q}}\left(\gr^{W}_{l-t}H^{i-t}\left(R^k\right)\right)\notag\\
=&\sum_{0\leq t\leq l, (l-t)~ \text{is even}}\binom {2(g-k)}{t}.\binom {k}{i-\frac{l-t}{2}}. \binom{i-\frac{l-t}{2}}{i-l+t} \hspace{50pt}\left( \text{by }\eqref{we15}\right)
\end{align*}
Since $H^{t}(J_0)$ is a pure Hodge structure, the Hodge number of $h^{r,s}(H^t(J_0))$ is
$$h^{r,s}(H^t(J_0))=\dim_{\mathbb{C}}\gr^{r}_{F}\gr^{s}_{\bar{F}}(H^{t}(J_0))=\binom{g-k}{r}\binom{g-k}{s},$$
where $r+s=t$. 
Taking sum over all such $0\leq t\leq l$ such that $l= t(\text{ mod } 2)$, we obtain the mixed Hodge number of type $(p,q)$ such that $p+q=l$ where $p=r+\frac{l-t}{2}$ and $q=s+\frac{l-t}{2}$. 
Therefore 
\[\dim_{\mathbb{C}}\gr_F^p\gr^p_{\bar{F}}\left(gr_{l}^{W}\left(H^{i}(\bar{J}_k)\right)\right)=\sum_{0\leq t\leq l, (l-t)~ \text{is even}} \binom {g-k}{p-\frac{l-t}{2}}\binom{g-k}{q-\frac{l-t}{2}}\binom {k}{i-\frac{l-t}{2}}. \binom{i-\frac{l-t}{2}}{i-l+t}.\]
\end{proof}


\begin{thebibliography}{9}

\bibitem{1}
Artin, M.
\emph{Algebraization of Formal Moduli: II. Existence of Modifications}
Annals of Mathematics
Second Series, Vol. 91, No. 1 (Jan., 1970), pp. 88-135 

\bibitem{BD}
Basu Suratno and Das Sourav,
\emph{A Torelli Type Theorem for nodal curves}, International Journal of Mathematics, Vol. 32, No. 07, 2150041 (2021), \url{https://doi.org/10.1142/S0129167X21500415}, \url{https://arxiv.org/abs/2106.08506}

\bibitem{2}
Brosnan, Patrick and El~Zein, Fouad.
\emph{Variations of mixed Hodge structure}.
{\em Hodge theory}, volume~49 of {\em Math. Notes}, pages
333--409. Princeton Univ. Press, Princeton, NJ, 2014.

\bibitem{BNR}
Beauville, Arnaud; Narasimhan, M. S.; Ramanan, S. 
\emph{Spectral curves and the generalised theta divisor}.
J. Reine Angew. Math. 398 (1989), 169–179.

\bibitem{3}
Bhosle, Usha N. and Parameswaran, A. J.
\emph{ Some result on the compactified Jacobian of a nodal curve}, preprint, 2018

\bibitem{4}
D'Souza, Cyril.
\newblock Compactification of generalised {J}acobians.
\newblock {\em Proc. Indian Acad. Sci. Sect. A Math. Sci.}, 88(5):419--457,
1979.

\bibitem{5}
Esteves, Eduardo.
\newblock Very ampleness for theta on the compactified {J}acobian.
\newblock {\em Math. Z.}, 226(2):181--191, 1997.

\bibitem{6}
Hartshorne, Robin.
\newblock Ample vector bundles.
\newblock {\em Inst. Hautes \'{E}tudes Sci. Publ. Math.}, (29):63--94, 1966.

\bibitem{G}
Gieseker, David.
\emph{A degeneration of the moduli space of stable bundles},
J. Differential Geom. 19(1): 173-206 (1984). DOI: 10.4310/jdg/1214438427

\bibitem{K}
Kollár, János.
\emph{Quotients by finite equivalence relations}
arXiv:0812.3608, December 2008,
10.48550/arXiv.0812.3608

\bibitem{8}
Lang, Serge.
\newblock {\em Introduction to {A}rakelov theory}.
\newblock Springer-Verlag, New York, 1988.

\bibitem{9}
Mather, John.
\newblock Notes on topological stability.
\newblock {\em Bull. Amer. Math. Soc. (N.S.)}, 49(4):475--506, 2012.

\bibitem{10}
Mumford, David.
\newblock {\em Abelian varieties}.
\newblock Tata Institute of Fundamental Research Studies in Mathematics, No. 5.
Published for the Tata Institute of Fundamental Research, Bombay; Oxford
University Press, London, 1970.

\bibitem{MRV I}
Melo, Margarida, Rapagnetta, Antonio and Viviani, Filippo. 
\emph{Fourier–Mukai and autoduality for compactified Jacobians. I}
Journal für die reine und angewandte Mathematik (Crelles Journal), vol. 2019, no. 755, 2019, pp. 1-65. \url{https://doi.org/10.1515/crelle-2017-0009}

\bibitem{MRV II}
Melo, Margarida, Rapagnetta, Antonio and Viviani, Filippo. 
\emph{Fourier–Mukai and autoduality for compactified Jacobians. II}
Geom. Topol. 23(5): 2335-2395 (2019). DOI: 10.2140/gt.2019.23.2335


\bibitem{MS}
Migliorini, Luca and Shende, Vivek.
\emph{A support theorem for Hilbert schemes of planar curves}
J. Eur. Math. Soc. 15, European Mathematical Society 2013,
\url{10.4171/JEMS/423}

\bibitem{MSV}
Migliorini, Luca; Shende, Vivek and Viviani, Filippo.
\emph{A support theorem for Hilbert schemes of planar curves, II}
Compositio Math. 157 (2021), 835–882,
\url{doi:10.1112/S0010437X20007745}

\bibitem{MY}
Maulik, Davesh and Yun, Zhiwei. 
\emph{Macdonald formula for curves with planar singularities},
Journal für die reine und angewandte Mathematik (Crelles Journal), Published by De Gruyter January 4, 2013,
\url{https://doi.org/10.1515/crelle-2012-0093}  
  
\bibitem{NR}
Narasimhan, M.S., Ramadas, T.R. 
\newblock {\em Factorisation of generalised theta functions. I.} \newblock Invent Math 114, 565–623 (1993). \url{https://doi.org/10.1007/BF01232680}



\bibitem{NS II}
Nagaraj, D. S., Seshadri, C. S.
\emph{Degenerations of the moduli spaces of vector bundles on curves II (generalized Gieseker moduli spaces)},
Proceedings of the Indian Academy of Sciences - Mathematical Sciences volume 109, pages 165–201 (1999)


\bibitem{11}
Oda, Tadao and Seshadri, C. S. 
\newblock Compactifications of the generalized {J}acobian variety.
\newblock {\em Trans. Amer. Math. Soc.}, 253:1--90, 1979.

\bibitem{P}
Pandharipande, Rahul;
\emph{A Compactification over Mg of the Universal Moduli Space of Slope-Semistable Vector Bundles};
Journal of the Americal Mathematical Society, Volume 9, Number 2, April 1996;
\url{https://www.ams.org/journals/jams/1996-09-02/S0894-0347-96-00173-7/S0894-0347-96-00173-7.pdf}


\bibitem{PT}
 Pagani, Nicola and Tommasi, Orsola  
 \emph{Geometry of Genus One Fine Compactified Universal Jacobians},
 International Mathematics Research Notices, Volume 2023, Issue 10, May 2023, Pages 8495–8543, \url{https://doi.org/10.1093/imrn/rnac094}
 
 
 \bibitem{Pi} 
 Piontkowski, Jens,
 \emph{Topology of the compactified Jacobians of singular curves}
 Mathematische Zeitschrift, 2007,
 Vol. 255, No 1, Pages: 195--226,
 \url{10.1007/s00209-006-0021-3}


\bibitem{13} Seshadri, C. S. \emph{fibers vectoriels sur les courbes algebriques}, Asterisque, 96, 1982, \url{http://www.numdam.org/issue/AST_1982__96__1_0.pdf}

\bibitem{14} Soucaris, A. \emph{The ampleness of the theta divisor on the compactified jacobian of a proper and integral curve}, Compositio Mathematica, Volume 93 (1994) no. 3, p. 231-242

\end{thebibliography}
\end{document}